\newcommand{\R}{\mathbb{R}}
\newcommand{\C}{\mathbb{C}}
\newcommand{\N}{\mathbb{N}}
\newcommand{\Z}{\mathbb{Z}}
\newcommand{\A}{\mathcal{A}}
\newcommand{\B}{\mathcal{B}}
\renewcommand{\C}{\mathcal{C}}
\newcommand{\J}{\mathcal{J}}
\newcommand\pprec{\prec\mkern-5mu\prec}
\newcommand{\q}{q}
\newcommand{\s}{s}
\renewcommand{\r}{r}
\newcommand{\U}{\mathcal{U}}
\newcommand{\DD}{\mathcal{D}}
\newcommand{\RR}{\mathcal{R}}
\newcommand{\QQ}{\mathcal{Q}}
\newcommand{\UU}{\mathcal{U}}
\newcommand{\VV}{\mathcal{V}}
\theoremstyle{plain}
\newtheorem{theorem}{Theorem}
\newtheorem{lemma}[theorem]{Lemma}
\newtheorem{prop}[theorem]{Proposition}
\theoremstyle{remark}
\newtheorem{remark}{Remark}
\numberwithin{equation}{section}
\begin{document}

\title{On the largest square divisor of shifted primes}

\author{Jori Merikoski}

\address{Department of Mathematics and Statistics, University of Turku, FI-20014 University of Turku,
Finland}
\email{jori.e.merikoski@utu.fi}

\begin{abstract} We show that there are infinitely many primes $p$ such that $p-1$ is divisible by a square $d^2 \geq p^\theta$ for $\theta=1/2+1/2000.$ This improves the work of Matom\"aki (2009) who obtained the result for $\theta=1/2-\varepsilon$ (with the added constraint that $d$ is also a prime), which improved the result of Baier and Zhao (2006) with $\theta=4/9-\varepsilon.$  Similarly as in the work of Matom\"aki, we apply Harman's sieve method to detect primes $p \equiv 1 \, (d^2)$. To break the $\theta=1/2$ barrier we prove a new bilinear equidistribution estimate modulo smooth square moduli $d^2$ by using a similar argument as Zhang (2014) used to obtain equidistribution beyond the Bombieri-Vinogradov range for primes with respect to smooth moduli. To optimize the argument we incorporate technical refinements from the Polymath project (2014). Since the moduli are squares, the method produces complete exponential sums modulo squares of primes which are estimated using the results of Cochrane and Zheng (2000).
\end{abstract}

\subjclass[2010]{Primary 11N13; Secondary 11N36}

\keywords{prime numbers, sieve methods, dispersion method}

\maketitle

\section{Introduction}
A famous open problem in number theory is to show that there are infinitely many prime numbers of the form $p=n^2+1$. As this problem appears hopeless with the current techniques, it is reasonable to consider the easier question of finding primes $p$ such that $p-1$ is divisible by a large square $d^2 \geq p^{\theta}$, $\theta \in (0,1)$. Note that the number of integers up to $X$ which are divisible by a square of size $X^\theta$ is of order $X^{1-\theta/2}.$  

By Linnik's Theorem  on the least prime in arithmetic progressions \cite{linnik} we know that the result is true for some $\theta > 0$. The first result specifically on square divisors is the exponent $\theta = 4/9-\varepsilon$ (for any $\epsilon > 0$) obtained by Baier and Zhao \cite{bzsparse,bzsquare} as an application of their large sieve for sparse sets of moduli. Baier and Zhao interpret the problem as an equidistribution problem for primes $p \equiv 1 \,\,(d^2)$, after which the result follows from their Bombieri-Vinogradov Theorem for sparse sets of moduli \cite[Theorem 3]{bzsparse}.

The current record on this problem is $\theta=1/2-\varepsilon$ (for any $\epsilon >0$) by Matom\"aki \cite{matomaki}. Matom\"aki applies Harman's sieve method with Type II information obtained using the large sieve of Baier and Zhao \cite{bzsquare}. It is noteworthy that in the results of Matom\"aki and Baier \& Zhao the divisor $d^2$ may be restricted to be a square of a prime. The result of Matom\"aki was strengthened by Baker \cite{baker} who obtained the Bombieri-Vinogradov Theorem for square moduli up to $d^2 < X^{1/2-\varepsilon}.$ Note that the exponent $\theta=1/2$ is the limit of what can be obtained assuming the Generalized Riemann Hypothesis.

We obtain for the first time the result with an exponent $\theta > 1/2$.   Our main theorem is
\begin{theorem}\label{maintheorem}
Let $a \neq 0$ be an integer. There are infinitely many primes $p$ such that $d^2 | (p-a)$ for some integer $d$ with
\begin{align*}
d^2 \geq p^{1/2+1/2000}.
\end{align*}
\end{theorem}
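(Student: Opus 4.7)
The plan is to count primes $p\in(X,2X]$ with $p\equiv a\,(\mathrm{mod}\,d^2)$ summed over a dyadic range $d\sim D$ with $D^2\asymp X^{1/2+1/2000}$; by a dyadic pigeonhole argument it suffices to show that this quantity is of the expected order $\gg X/(D\log X)$ for all large $X$. Following Matom\"aki, I would apply Harman's sieve in order to detect the primes in the progression. The Buchstab iteration underlying Harman's method reduces the lower bound to bounding the discrepancy
\begin{equation*}
\Delta_d(f) \;:=\; \sum_{n\equiv a\,(d^2)} f(n) \;-\; \frac{1}{\varphi(d^2)}\sum_{(n,d)=1} f(n)
\end{equation*}
on average over $d\sim D$, for two classes of sequences $f$: Type~I sums of length up to some $M_1$, and Type~II bilinear convolutions $\alpha_m\beta_n\mathbf{1}_{mn\asymp X}$ with $M,N$ in a suitable range. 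The Type~I estimates for $D^2$ close to $X^{1/2}$ should already be accessible from the sparse large sieve of Baier--Zhao; the whole source of the new gain $1/2000$ must therefore come from a Type~II bilinear equidistribution estimate valid for $D^2$ slightly larger than $X^{1/2}$.

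To obtain such a Type~II estimate I would restrict the outer sum over $d\sim D$ to those $d$ that are $X^\eta$-smooth for a small parameter $\eta>0$, losing only a constant factor by standard density estimates for smooth numbers. For each smooth $d$ I can freely factor $d=d_1d_2$ with $d_1\sim D_1$ and $d_2\sim D_2$ at any chosen sizes subject to $D_1D_2\asymp D$. Following Zhang's argument for equidistribution of primes to smooth moduli beyond Bombieri--Vinogradov, I would apply Cauchy--Schwarz in the $n$-variable to eliminate the $\beta_n$ coefficients, expand the resulting dispersion square, and open the congruence $mn\equiv m'n'\,(\mathrm{mod}\,d_1^2 d_2^2)$ by completion modulo $d_1^2$ and Poisson summation modulo $d_2^2$. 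This should reduce matters to a complete exponential sum modulo $d_1^2 d_2^2$, which by CRT factors into sums modulo $p^2$ for primes $p\mid d_1 d_2$. The Polymath refinements of Zhang's method, in particular an additional $q$-van der Corput step and a careful optimization of the split $D_1,D_2$ against the lengths $M,N$, are what I would need in order to squeeze out the small positive gain past the $\theta=1/2$ barrier.

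The principal technical obstacle, and the main point of departure from Zhang's original setting, is that the resulting complete exponential sums are modulo squares of primes rather than primes. For a prime $p\ll X^\eta$, Weil's classical bound does not apply directly; instead I would invoke the $p$-adic stationary-phase method of Cochrane--Zheng, which gives square-root cancellation provided that a certain Jacobian of the rational phase is non-vanishing modulo $p$. Verifying this non-degeneracy condition for the precise phases that arise from the dispersion argument, in the presence of the shift by $a$, is the hardest part of the proof: the congruences cutting out the critical points must be written out explicitly and shown to admit only a controlled number of solutions modulo $p$, uniformly in the additive frequencies produced by completion and Poisson. Once the exponential sums are under control, a careful balancing of $M,N,D_1,D_2$ against the Buchstab losses in Harman's sieve should just barely suffice to deliver the quantitative exponent $1/2+1/2000$.
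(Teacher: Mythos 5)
Your overall strategy is the paper's: restrict to smooth, well-factorable square moduli, run Harman's sieve, and prove the new bilinear (Type~II) equidistribution estimate beyond $X^{1/2}$ by Linnik's dispersion method in the style of Zhang and Polymath, completing sums by the $q$-van der Corput method and handling the complete sums modulo $p^2$ with Cochrane--Zheng. However, there is a concrete gap in how you propose to supply the remaining (Type~I) information. You assert that ``the Type I estimates for $D^2$ close to $X^{1/2}$ should already be accessible from the sparse large sieve of Baier--Zhao,'' but here the moduli are $d^2\asymp D=X^{1/2+2\varpi}$ with $\varpi=1/4000$, i.e.\ \emph{beyond} $X^{1/2}$, which is outside the range of Baier--Zhao and of Baker's Bombieri--Vinogradov theorem for square moduli ($d^2<X^{1/2-\varepsilon}$); a Type~I sum $\sum_{m\le M_1}\alpha_m\sum_{n:\,mn\equiv a\,(d^2)}1$ is not even individually trivial once $X/M_1$ is comparable to $d^2$. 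In the paper the Type~I and Type~I/II estimates (Propositions \ref{typeipure} and \ref{typei}) have to be proved by the same machinery as the bilinear estimate: Poisson summation, Cauchy--Schwarz exploiting the factorization $d=rq$ of the smooth modulus, and the complete-sum bounds of Lemmas \ref{weil} and \ref{czlemma}.

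This matters because the sparse-moduli dispersion cannot deliver Type~II information when both $M$ and $N$ lie within $X^{2\varpi+\delta}$ of $X^{1/2}$: after Cauchy--Schwarz the diagonal contribution is too large by a factor of roughly $D/M$ (see Remark \ref{gapremark}), so Proposition \ref{typeii} necessarily excludes the interval $[X^{1/2-2\varpi-\delta},X^{1/2+2\varpi+\delta}]$. Your sketch does not anticipate this hole, and Harman's decomposition cannot be closed without something covering it; the paper plugs it with a Type~I estimate in which the single arbitrary coefficient runs up to $X^{1/2+2\varpi+\delta}$ --- again beyond the modulus size, hence again requiring the exponential-sum technology rather than any off-the-shelf large sieve. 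A smaller point of emphasis: the Cochrane--Zheng input is not where the difficulty lies. One only needs $(p,f')=1$, which is checked by observing that the leading coefficient of the relevant numerator is essentially the completion frequency $\xi$ (or $k_1-k_2$), and critical points are then counted trivially, giving $\ll p$ for the sum modulo $p^2$; the genuinely delicate part is the optimization forced by the sparseness of the moduli --- the choice of $R,Q$ with $R\sqrt{Q}$ just below $N$, and the second Cauchy--Schwarz in $n$ after the further splitting $Q/q_0^2=UV$ --- which is what produces the numerical constraint $19\sigma+90\varpi+71\delta<1$ and hence the exponent $1/2000$.
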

Similarly as in the work of Matom\"aki \cite{matomaki}, we employ Harman's sieve. To break the $\theta=1/2$ barrier, we will obtain a new bilinear equidistribution estimate (Proposition \ref{typeii}) by applying a similar technique as in Zhang's work \cite{zhang} on bounded gaps between primes, incorporating the ideas developed in the subsequent Polymath project \cite{polymath}. 

\begin{remark}
We should note that the exponent $1/2000$ in Theorem \ref{maintheorem} has not been fully optimized. By optimizing the sieve argument one should be able to increase this to some exponent between $1/500$ and $1/1000$. We do not pursue this issue here since our primary goal was to obtain the result for some exponent $\theta > 1/2$, and we feel that efforts on improving the result should first be directed at obtaining more arithmetical information (eg. a two dimensional `Type I$_2$ estimate' , or a three dimensional Type III estimate as in \cite{polymath, zhang}) before optimizing the sieve.
\end{remark}

\subsection{Structure of the proof}
In Section \ref{harmansection} we state a more quantitative version of Theorem \ref{maintheorem} and apply Harman's sieve method (cf. Harman's book \cite{harman}, for instance) to give a proof of this. The idea of applying Harman's sieve to this problem goes back to the work of Matom\"aki \cite{matomaki}. Motivated by the work of Zhang \cite{zhang} and  the Polymath project \cite{polymath}, we  restrict to divisors  $d^2$ which are of the form $p_1^2 \cdots p_K^2,$ where each prime $p_j$ is of size $X^{\delta/2}$ for some small $\delta >0$. We show (cf. Theorem \ref{theorem2}) that for almost all $d^2 \asymp X^{1/2+1/2000}$ of this form we have
\begin{align*}
\sum_{\substack{p \sim X \\ p \equiv a \,(d^2)} } 1 \, \gg \, \frac{X}{\phi(d^2) \log X}.
\end{align*}
 
Roughly speaking, Harman's sieve is a combinatorial device of breaking a sum over primes in a set $\A$ into sums of Type I  and Type II:
\begin{align*}
\text{Type I:} \quad  \sum_{ \substack{un \in \A \\ u \sim U}} a_u, \quad \quad \quad \text{Type II:} \quad \sum_{ \substack{uv \in \A \\ u \sim U, \, v \sim V}} a_u b_v,
\end{align*}
 where the coefficients $a_u$ and $b_v$ are arbitrary divisor bounded functions. Because we need the Type II estimate for almost all moduli $d^2$, it can be stated as an averaged bilinear equidistribution estimate (Proposition \ref{typeii}). This is similar to the recent bilinear equidistribution estimates of Zhang \cite[Section 7]{zhang} and the Polymath project \cite[Theorem 5.1]{polymath} concerning the Bombieri-Vinogradov Theorem, with the exception that our moduli run over perfect squares. We note that  \cite{polymath,zhang} use Heath-Brown's identity to obtain a combinatorial decomposition of a sum over primes into different types of sums. 

More precisely, to apply Harman's sieve we require three types of arithmetical information, Type I/II estimate (Proposition \ref{typei}), Type II estimate (Proposition \ref{typeii}) and Type I estimate (Proposition \ref{typeipure}).  Note here a difference in terminology: our Type II range is called  a Type I range in \cite{polymath,zhang}. By Type I/II and Type I we refer to sums where there is already a smooth variable present, which is the terminology used in Harman's book \cite{harman}.

Due to the fact that the set of moduli is very sparse, our Type II information is much narrower than in the situation of \cite{polymath,zhang}. The underlying philosophy for us (as well as in the work of Matom\"aki \cite{matomaki}) is that if one can use certain arithmetical information with Vaughan's or Heath-Brown's identity to give an asymptotic formula, then with more narrow arithmetical information one can still hope to obtain lower and upper bounds by using Harman's sieve. The advantage of Harman's sieve compared to asymptotic sieves is that we can use positivity to regard certain sums as error terms whose contribution  can be bounded numerically. These numerics ultimately determine the exponent $1/2000$ in Theorem \ref{maintheorem}, when combined with the restrictions for the Type II estimate.

In Sections \ref{typeisection} and \ref{typeipuresection} we give proofs of the Type I/II and Type I estimates. These are fairly standard applications of Poisson summation formula and Cauchy-Schwarz, which result in incomplete exponential sums that can be bounded using the P\'olya-Vinogradov method. 

The proof of the Type II estimate takes up most of the paper. In Section \ref{typeiisection} we apply Linnik's dispersion method to reduce the proof to certain incomplete exponential sums which are estimated in Section \ref{expsumsection}. The method is very similar to the argument in \cite[Section 5]{polymath}, especially the proof of  \cite[Theorem 5.1(ii)]{polymath}. Here we will need the fact that $d$ is a product of small primes to obtain a suitable factorization $d=rq$. The idea of using well-factorable moduli goes back to the pioneering work of Fouvry-Iwaniec \cite{fi}, while the idea of using very smooth moduli is due to Zhang \cite{zhang}. The main difference in Section \ref{typeiisection} compared to \cite{polymath,zhang} is that the set of moduli is sparse, which means that the optimization of applications of Cauchy-Schwarz is slightly different.

In Section \ref{expsumsection}, because the moduli are squares, we need to consider incomplete exponential sums of the type
\begin{align*}
 \sum_{n} \psi_N(n) e_{q}(f(n)),
\end{align*}
where $f$ is a rational function, $q$ is cube free, and $\psi_N(n)$ is a smoothing of $1_{n \sim N}$. Similarly as in \cite[Proposition 4.12]{polymath}, we apply Heath-Brown's $q$-van der Corput method to complete the sum. Additionally to the proofs in \cite[Section 4]{polymath}, we need a bound for exponential sums  of the form
\begin{align*}
\sum_{n \in \Z/ p^2 \Z} e_{p^2} (g(n)),
\end{align*}
where $p$ is a prime and $g$ is a rational function. Using the results of Cochrane and Zheng \cite{cz}, we are able to obtain square root cancellation in the generic case for these sums.

\subsection{Notations}
We use the following notations: for functions $f$ and $g$ with $g$ positive, we write $f \ll g$ or $f= \mathcal{O}(g)$ if there is a constant $C$ such that $|f|  \leq C g.$ The notation $f \asymp g$ means $g \ll f \ll g.$ The constant may depend on some parameter, which is indicated in the subscript (e.g. $\ll_{\epsilon}$).
We write $f=o(g)$ if $f/g \to 0$ for large values of the variable. For variables we write $n \sim N$ meaning $N<n \leq 2N$. 

It is convenient for us to define
\begin{align*}
A \pprec B
\end{align*}
to mean $A \, \ll_\epsilon X^{\epsilon} B.$ A typical bound we use is $\tau_k(n) \pprec 1$  for $n \ll X$, where $\tau_k$ is the $k$-fold divisor function. We say that an arithmetic function $f$ is divisor-bounded if $|f(n)| \ll \tau_k(n)$ for some $k$.

We let $\eta >0$ denote a sufficiently small constant, which may be different from place to place. For example, $A \ll X^{-\eta}B$ means that the bound holds for some $\eta >0.$

For a statement $E$ we denote by $1_E$ the characteristic function of that statement. For a set $A$ we use $1_A$ to denote the characteristic function of $A.$ 

We also define $P(w):= \prod_{p\leq w} p,$ where the product is over primes. 

We let $e(x):= e^{2 \pi i x}$ and $e_q(x):= e(x/q)$ for any integer $q \geq 1$. For integers $a,$ $b$, and $q \geq 1$ we define $e_{q}(a/b) := e(a\overline{b}/q)$ if $b$ is invertible modulo $q$, where $\overline{b}$ is the solution to $b\overline{b} \equiv 1 \,\, (q).$ If $b$ is not invertible modulo $q$, we set $e_q(a/b) =0.$ 

\subsection{Acknowledgements}
I am grateful to my supervisor Kaisa Matom\"aki for support and comments. I also express my gratitude to Emmanuel Kowalski for helpful discussions as well as for hospitality during my visit to ETH Z\"urich. I also wish to thank the referee for comments. During the work the author was supported by a grant from the Magnus Ehrnrooth Foundation.

\section{Applying Harman's sieve} \label{harmansection}
 Let $X \gg 1$ and let $\delta >0$ be small. Let $\varpi:=1/4000,$ $D:= X^{1/2+2\varpi}$, $K:=\lceil 1/\delta \rceil$, $P:= D^{1/K}$, and define
\begin{align*}
I_j := (2^{j-1}P^{1/2}, 2^{j} P^{1/2} ] \quad  \text{for} \quad j=1,2,\dots, K.
\end{align*}
We set
\begin{align} \label{ddefinition}
\DD := \{p_1^2 p_2^2 \cdots p_K^2: \, p_j \in I_j \quad \text{for} \quad j=1,2,\dots,K \},
\end{align}
so that $d^2 \in \DD$ is of size $\asymp D$ and is a square of a squarefree integer.

Fix an integer $a \neq 0$. Fix also a $C^\infty$-smooth  function $ 0 \leq \psi \leq 1$, supported on the interval $[1,2]$ and satisfying $\psi(x)=1 $ for $1+\eta \leq x \leq 2-\eta$ for some sufficiently small $\eta >0$.  For $d^2 \in \DD$ and $z < X$, denote
\begin{align*}
S(\A^d,z):= \sum_{\substack{n \equiv a \,\, (d^2) \\ (n,P(z))=1}} \psi(n/X) \quad \text{and} \quad  S(\B^d,z):=  \frac{1}{\phi(d^2)}\sum_{\substack{(n,d^2)=1 \\ (n,P(z))=1}} \psi(n/X),
\end{align*}
so that $S(\A^d,2\sqrt{X})$ is a sum over primes $p \equiv a \,\, (d^2)$ of size $p\asymp X$. Then Theorem \ref{maintheorem} follows from

\begin{theorem} \label{theorem2}
Let $\DD$ be as in (\ref{ddefinition}). Then there exists  $\delta, \eta >0$ such that for all but $\mathcal{O}(D^{1/2} X^{-\eta})$ of the moduli $d^2  \in \DD$ we have
\begin{align*}
S(\A^d,2\sqrt{X}) > 0.05 \cdot S(\B^d,2\sqrt{X}).
\end{align*}
\end{theorem}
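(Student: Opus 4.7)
The plan is to apply Harman's sieve in its classical combinatorial form to $S(\A^d, 2\sqrt{X})$, combining Buchstab's identity with the three arithmetic inputs (Propositions~\ref{typei}, \ref{typeii}, and \ref{typeipure}) to evaluate asymptotically as many pieces of the decomposition as possible and to discard the remaining pieces by positivity. Running the identical decomposition for $S(\B^d,2\sqrt{X})$, each asymptotically evaluated piece in $\A^d$ is matched with its counterpart in $\B^d$ up to the error terms in those propositions. The resulting lower bound is of the form $c\cdot S(\B^d,2\sqrt{X}) + O(X^{-\eta} S(\B^d,2\sqrt{X}))$, and the theorem reduces to the numerical verification $c > 0.05$.

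\textbf{Buchstab decomposition.} Starting from $z = 2\sqrt{X}$, I iterate Buchstab's identity
\begin{align*}
S(\A^d,z) = S(\A^d,w) - \sum_{w \leq p < z} S(\A^d_p, p),
\end{align*}
choosing the breakpoints $w$ in accordance with the ranges permitted by Propositions~\ref{typei}, \ref{typeii}, and \ref{typeipure}. Whenever a piece $S(\A^d_{p_1 \cdots p_k}, p_k)$ (which is essentially a bilinear sum over $(u,v)$ with $uv \equiv a\,(d^2)$, $u \asymp p_1\cdots p_k$, and $v \asymp X/(p_1\cdots p_k)$, with divisor-bounded weights) falls into the Type~I, Type~I/II, or Type~II range, I evaluate it directly using the corresponding proposition; otherwise, if the depth permits I apply Buchstab once more, and if not I discard the term using positivity. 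The fact that $d$ is already a product of $K$ primes in the prescribed dyadic intervals $I_1,\ldots,I_K$ guarantees a smooth factorization $d=rq$ matching whatever split the Type~II hypothesis requires.

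\textbf{Exceptional set of moduli.} Each of the three propositions is formulated as a sum over $d^2 \in \DD$ with total error bounded by $D^{1/2} X^{-\eta}$ times the corresponding main-term size. Markov's inequality produces, for each such sum, an exceptional set of $d$'s of size $O(D^{1/2} X^{-\eta/2})$ on which the relative error exceeds $X^{-\eta/2}$; taking the union over the $(\log X)^{O(1)}$ Buchstab pieces (after dyadically decomposing all prime variables) and relabelling $\eta$ preserves an exceptional set of size $O(D^{1/2}X^{-\eta})$, outside of which every piece of the decomposition behaves as predicted simultaneously.

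\textbf{Numerical verification and the main obstacle.} After matching the $\A^d$ and $\B^d$ decompositions, the constant $c$ takes the schematic form
\begin{align*}
c \;\geq\; 1 - \sum_k \int_{R_k^{\mathrm{lost}}} W_k(s_1, \ldots, s_k)\,ds_1\cdots ds_k,
\end{align*}
where $W_k$ is the standard Buchstab weight in the $k$ sieve variables $s_i = \log p_i / \log X$, and $R_k^{\mathrm{lost}}$ is the union of simplicial regions discarded by positivity at depth $k$. The central difficulty is that, since $\varpi = 1/4000$ is small, the Type~II window in Proposition~\ref{typeii} is narrow, so a naive decomposition leaves $R_k^{\mathrm{lost}}$ too large. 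To shrink it I would exploit the symmetry of Proposition~\ref{typeii} in its two bilinear variables (``role-reversal''), iterate Buchstab several times to break problematic regions into smaller pieces, and use the Type~I estimate to peel off long smooth variables wherever available. The principal obstacle is this combinatorial and numerical optimization: verifying that the cumulative integral over $\bigcup_k R_k^{\mathrm{lost}}$ stays strictly below $0.95$, which, together with the admissible ranges in Propositions~\ref{typei}--\ref{typeipure}, is what ultimately pins down the exponent $1/2000$ in Theorem~\ref{maintheorem}.
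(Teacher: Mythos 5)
Your outline follows the same route as the paper (Harman's sieve, Buchstab decompositions run in parallel for $\A^d$ and $\B^d$, asymptotic evaluation of pieces via the three propositions, discarding the rest by positivity, and a union/Markov argument over $(\log X)^{O(1)}$ pieces for the exceptional set of moduli). But there is a genuine gap: the entire quantitative content of the theorem is the step you defer as "the principal obstacle". You never specify the actual decomposition nor verify that the discarded mass is below $0.95$, and without that there is no proof that the constant exceeds $0.05$. The paper's proof consists precisely of this construction: it takes $Z=X^{\sigma-2\varpi-\delta}$ with $\sigma=1/19.5$, applies Buchstab twice, splits the resulting two-prime sum into three explicit regions $S_1,S_2,S_3$, performs further Buchstab steps (including upward applications that turn the implicit variable into a prime) inside $S_1$ and $S_2$, converts each discarded piece into a Buchstab integral via Lemma \ref{bilemma} together with the standard upper bounds for $\omega(u)$, and computes the deficiencies numerically as $0.0293+0.2006+0.71153=0.94143<0.95$. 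Asserting that a suitable decomposition exists with total loss below $0.95$ is exactly what must be proved; it cannot be taken on faith, especially since the Type II window here is far narrower than in classical applications of Harman's sieve.

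Two further points where your sketch is too optimistic. First, the pieces $S(\C^d_{p_1\cdots p_k},p_k)$ and the fundamental-lemma-type sums $S(\C^d_{uv},Z)$ are not directly instances of Propositions \ref{typei}--\ref{typeipure}: the condition $(n,P(Z))=1$ must be opened with M\"obius, and the part where the combined smooth-times-coefficient variable exceeds the Type I (resp.\ Type I/II) range must be regrouped into the Type II range by a greedy partition of the prime factors of $e$ and a separation of variables via Perron's formula; this is the content of the paper's Propositions \ref{funprop} and \ref{funprop2}, which your sketch silently assumes. Second, the Type II information has a hole $[X^{1/2-2\varpi-\delta},X^{1/2+2\varpi+\delta}]$ around $\sqrt{X}$, which is why the paper must take the sieve level $Z$ below $X^{\sigma}$ by $2\varpi+\delta$ and must invoke the pure Type I estimate (Proposition \ref{typeipure}) to handle combined variables landing in that hole; your plan of "choosing breakpoints in accordance with the permitted ranges" does not address this, and the role-reversal device you propose is not used in the paper (it is only remarked that it could sharpen $S_3$) and would itself require additional arithmetic input to justify.
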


The proof of this is given at the end of this section, by applying Harman's sieve method. For the sieve we need arithmetical information given by the following  propositions. To state these propositions, let us define the Type II parameter $\sigma := 1/19.5.$
\begin{prop} \emph{\textbf{(Type I/II estimate).}} \label{typei} Let $\DD$ be as in (\ref{ddefinition}), $\sigma=1/19.5,$ and $LMN = X,$ where $L,M,N \geq 1$, $M \leq X^{1/2-\sigma},$ and $N \leq X^{1/8 +\sigma/2 -5\varpi /2 - \eta}.$ Let $\alpha(m)$ and $\beta(n)$ be divisor-bounded functions. Then for all $d^2 \in \DD$
\begin{align*}
\bigg | \sum_{\substack{\ell mn \equiv a \, (d^2) \\ m \sim M, \, \, n \sim N}} \alpha(m) \beta(n) \psi(\ell mn/X) - \frac{1}{\phi(d^2)} \sum_{\substack{( \ell mn,d^2)=1 \\ m \sim M, \, \, n \sim N}} \alpha(m) \beta(n) \psi(\ell mn/X) \bigg | \, \ll \frac{X^{1-\eta}}{D}.
\end{align*}

\end{prop}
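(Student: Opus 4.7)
The plan is the standard Poisson--Cauchy-Schwarz--completion scheme, analogous to the Type I estimate in \cite{polymath,zhang} but adapted to squared moduli. Since $\ell$ is the smooth, long variable (of length $L = X/(mn) \gg X/(MN)$), I would first open the congruence $\ell mn \equiv a\,(d^2)$ as $\ell \equiv a\overline{mn}\,(d^2)$; the contribution from $(mn, d^2) > 1$ is negligible because every prime factor of $d^2$ has size $\asymp X^{\delta/2}$ while $a$ is fixed. Poisson summation on $\ell$ gives
\begin{align*}
\sum_{\ell \equiv a\overline{mn}\,(d^2)} \psi(\ell mn/X) \;=\; \frac{L}{d^2}\sum_{h \in \Z} \hat\psi(hL/d^2)\, e_{d^2}(a h\overline{mn}),
\end{align*}
and the $h=0$ term matches the main term from $S(\B^d,z)$ after opening $(\ell,d^2)=1$ by M\"obius. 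Rapid decay of $\hat\psi$ truncates to $|h| \leq H := (d^2/L)X^\epsilon \asymp (DMN/X)X^\epsilon$ at negligible cost.

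The remaining error is the bilinear sum
\begin{align*}
E \;=\; \sum_{\substack{m \sim M \\ n \sim N}}\alpha(m)\beta(n) \cdot \frac{X/(mn)}{d^2}\sum_{1 \leq |h| \leq H} \hat\psi(hL/d^2)\, e_{d^2}(a h\overline{mn}).
\end{align*}
I would next apply Cauchy-Schwarz in $m$: using $\|\alpha\|_2^2 \pprec M$ and $L/d^2 \asymp X/(MND)$, this yields $|E|^2 \pprec M(X/(MND))^2 \cdot T$, with $T$ the $m$-sum of the squared modulus of the inner $(n,h)$-sum. Expanding the square and interchanging orders reduces the inner $m$-sum to
\begin{align*}
\sum_{\substack{m \sim M \\ (m,d^2)=1}} e_{d^2}(a\overline m\, c), \qquad c := h\overline{n} - h'\overline{n'} \pmod{d^2},
\end{align*}
which I would complete via Polya-Vinogradov. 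The resulting complete Kloosterman-type sums modulo $d^2$ factor by CRT across the $K$ prime squares $p_j^2$; each local factor is $\ll p_j$ by the Weil bound after reduction mod $p_j$ (or, in degenerate ramified cases, by Cochrane-Zheng \cite{cz}), so the off-diagonal contribution is $\pprec d = D^{1/2}$ per quadruple $(n,n',h,h')$.

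For the diagonal $c \equiv 0\,(d^2)$, the bound $2NH \ll D$ (a direct consequence of $N \leq X^{1/8 + \sigma/2 - 5\varpi/2 - \eta}$) forces $nh' = n'h$ as integers, so such quadruples number $\pprec NH$ and contribute $\pprec MNH$ to $T$. Combining with the off-diagonal contribution $\pprec N^2 H^2 D^{1/2}$ and substituting $H \asymp DMN/X$ yields
\begin{align*}
|E|^2 \;\pprec\; \frac{XM}{D} \,+\, MN^2 \sqrt D,
\end{align*}
and both terms are $\ll X^{2-2\eta}/D^2$ precisely under the stated hypotheses: the first requires $MD \ll X^{1-2\eta}$, which holds with room since $\sigma \gg \varpi$, and the second requires $N^2 M \ll X^{3/4 - 5\varpi - 2\eta}$, which is \emph{exactly} the hypothesis on $N$ combined with $M \leq X^{1/2-\sigma}$. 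The hard part will be the bookkeeping at the incomplete-to-complete passage --- tracking the condition $(m,d^2)=1$ and handling frequencies $c$ with $(c,d^2) > 1$ but $c \not\equiv 0\,(d^2)$, where some CRT factors are nonzero while others vanish --- but because $d^2$ is the square of a squarefree product of $K = O(1/\delta)$ balanced primes, all local factorisations are clean and no input deeper than Weil/Cochrane-Zheng is required, consistent with the author's characterisation of this estimate as ``fairly standard''.
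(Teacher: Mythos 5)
Your proposal follows essentially the same route as the paper's proof: Poisson summation in the long smooth variable $\ell$ with truncation at $H \asymp X^{\epsilon}D/L$, Cauchy--Schwarz smoothing the $m$-variable while keeping the $(n,h)$-sum inside, completion of the resulting incomplete sum over $m$ by the P\'olya--Vinogradov method with Weil (mod $p$) and Cochrane--Zheng (mod $p^2$) input, and the same diagonal/off-diagonal numerology, with the diagonal forced to $hn'=h'n$ by the size condition coming from the hypothesis on $N$. The only points the paper handles explicitly that you gloss over are the separation of variables in $\psi(\ell mn/X)$ (done there by a finer-than-dyadic decomposition before Cauchy--Schwarz) and the intermediate cases $1<(h_1n_2-h_2n_1,d^2)<d^2$, which add terms of the shape $M^2N^2/D$ to your bound for $|E|^2$ (the paper's $MN/D^{1/2}$ term); these are admissible under the stated hypotheses, so your argument is correct in substance.
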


\begin{prop} \emph{\textbf{(Type II estimate).}} \label{typeii} Let $\DD$ be as in (\ref{ddefinition}), $\sigma=1/19.5$, and $MN = X$ with
\begin{align*}
M, N \in [X^{1/2-\sigma}, X^{1/2+\sigma}] \setminus [X^{1/2-2\varpi - \delta}, X^{1/2 + 2 \varpi + \delta}]
\end{align*}
Let $\alpha(m)$ and $\beta(n)$ be divisor-bounded functions. Then
\begin{align*}
\sum_{d^2 \in \DD} \bigg | \sum_{\substack{mn \equiv a \, (d^2) \\ m \sim M, \, \, n \sim N}} \alpha(m) \beta(n) \psi(mn/X) - \frac{1}{\phi(d^2)} \sum_{\substack{(mn,d^2)=1 \\ m \sim M, \, \, n \sim N}} \alpha(m) \beta(n) \psi(mn/X) \bigg | \, \ll \frac{X^{1-\eta}}{\sqrt{D}}.
\end{align*}
\end{prop}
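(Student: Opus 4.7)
The plan is to adapt the bilinear dispersion argument of Zhang \cite{zhang} and Polymath \cite{polymath} to the setting where the moduli are squares and the family $\DD$ is sparse. By the symmetry between $m$ and $n$, I may assume $N \leq M$, in which case the hypothesis $N \notin [X^{1/2-2\varpi-\delta}, X^{1/2+2\varpi+\delta}]$ combined with $MN = X$ forces $N \leq X^{1/2-2\varpi-\delta}$. First, remove the absolute values by introducing unimodular weights $c_{d^2}$ with $|c_{d^2}| \leq 1$, reducing the task to bounding $\sum_{d^2 \in \DD} c_{d^2} E_{d^2}$, where $E_{d^2}$ is the signed error term for a single modulus.

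Next, exploit the structure of $\DD$: each $d$ admits a factorization $d = qr$ obtained by splitting the $K$ prime factors $p_1, \ldots, p_K$ into two groups, with $q \asymp Q$, $r \asymp R$, and $QR \asymp D^{1/2}$; the split is to be optimized at the end. I would then apply Cauchy-Schwarz in the $n$-aspect in the style of \cite[Section 5]{polymath}, retaining the congruence modulo $q^2$ and reserving $r^2$ for completion. Expanding the square yields a sum over quadruples $(r_1, r_2, n_1, n_2)$ with the long $m$-variable constrained by two congruences modulo $q^2 r_1^2$ and $q^2 r_2^2$. The diagonal contribution $r_1 = r_2$, $n_1 = n_2$ reproduces a term matching the expected main size $\pprec X^{2-\eta}/D$ (after the square root), which is admissible.

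For the off-diagonal, the two congruences on $m$ force $m$ into a single arithmetic progression modulo a divisor of $q^2 r_1^2 r_2^2$. Applying Poisson summation to the smooth $m$-variable yields a dual sum of the form $\sum_h \widehat{\psi}(h M/\cdot)\,\mathcal{S}(h; n_1, n_2; \cdot)$, where $\mathcal{S}$ is a complete exponential sum with a rational phase. By the Chinese Remainder Theorem this factors into local sums modulo $p^2$ for each prime $p \mid r_1 r_2$. Estimating such prime-square exponential sums via Cochrane-Zheng \cite{cz} yields square-root cancellation $\ll p$ for each local factor, outside of a small degenerate locus; this is the genuinely new arithmetic input compared to \cite{polymath, zhang}, and is developed in Section \ref{expsumsection}.

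The principal obstacle is the degenerate contribution. For each prime $p \mid r$, the set of residues $(n_1, n_2, h, a)$ for which the local phase fails the Cochrane-Zheng non-degeneracy hypothesis has relative density $O(1/p)$, and one must show that aggregating this over the many small prime divisors arising from $\DD$ is controllable. Once this is under control, the remaining work is to balance $Q$ against $R$ so that the combined savings from the exponential sum bound exceed the factor $X^\eta \sqrt{D}$ needed to beat the target: the slack $N \leq X^{1/2-2\varpi-\delta}$ created by the forbidden window provides exactly the room required, and $\sigma = 1/19.5$ is selected to make the final numerics close. I expect the sparsity of $\DD$ to affect the argument only mildly, through the Cauchy-Schwarz optimization, since the number of moduli is already $\asymp \sqrt{D}$ (matching the target denominator) so no artificial amplification is needed.
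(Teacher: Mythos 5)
Your overall strategy (reduce to $N\leq X^{1/2-2\varpi-\delta}$, factor $d=qr$ using the smooth structure of $\DD$, disperse via Cauchy--Schwarz, Poisson in the smooth long variable, then prime-square exponential sums via Cochrane--Zheng) points in the right direction, but there is a genuine gap at the crucial step: the argument stops one Cauchy--Schwarz short. After you complete the $m$-sum by Poisson, the dual frequency range has length $H\approx q^2[r_1^2,r_2^2]/M$, which is far smaller than the modulus, and the remaining variables $n_1,n_2$ still carry the arbitrary divisor-bounded coefficients $\beta(n_1)\overline{\beta(n_2)}$. So at that stage there is no complete exponential sum to which Cochrane--Zheng could be applied: the quantity $\mathcal{S}(h;n_1,n_2)$ is a single additive character $e_{q^2[r_1^2,r_2^2]}(\theta h)$ of modulus one, and its CRT factorization yields no cancellation. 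The cancellation has to be extracted from the $n$-sum, and since $\beta$ is arbitrary this forces a \emph{second} Cauchy--Schwarz in $n$. In the paper this is the heart of the proof: one first splits the inner moduli a second time, $Q/q_0^2=UV$ with $W=M^2X^{-2\delta}R^{-2}Q^{-2}$ calibrated so that the new diagonal is just admissible (this is exactly where the sparsity of $\DD$ bites, contrary to your expectation that it enters only mildly --- it is why the admissible range ends up near $\sigma<1/19$ rather than the $1/4$ of the Polymath setting), and only then does one face incomplete sums of length $N$ in $n$ with explicit rational phases, which are treated by Heath--Brown's $q$-van der Corput method (Lemma \ref{expsum2lemma}); the Weil and Cochrane--Zheng bounds enter there, for the complete sums modulo $p$ and $p^2$ produced by that completion, not for sums created directly by the Poisson step. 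Consequently your discussion of the ``degenerate locus'' is attached to the wrong object, and no amount of balancing $Q$ against $R$ in your one-Cauchy--Schwarz scheme reproduces the paper's conditions such as $19\sigma+90\varpi+71\delta<1$.

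A secondary omission: before the symmetric-discrepancy dispersion can start, the comparison term $\tfrac{1}{\phi(d^2)}\sum_{(mn,d^2)=1}$ must be converted into a comparison of fixed residue classes. The paper does this by splitting the discrepancy into $\Delta_1+\Delta_2$ and disposing of $\Delta_2$ with the Baier--Zhao large sieve for sparse moduli (Lemma \ref{largesieve}), using that $\RR$ is well-distributed; your reduction via unimodular weights $c_{d^2}$ does not by itself supply this step (alternatively one must run the full three-term dispersion and match main terms, which you would then have to carry out). Your choice of which factor of $d$ stays outside the Cauchy--Schwarz is only a relabelling issue and is fine once optimized, and your diagonal accounting is plausible, but as written the proposal does not contain the mechanism that actually produces the saving.
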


\begin{remark} The width of the Type II range is mainly determined by the condition (cf. (\ref{sigma}) below)
\begin{align*}
19 \sigma + 90 \varpi + 71 \delta < 1,
\end{align*}
which certainly holds for $\sigma = 1/19.5$ and $\varpi =1/4000$ for some $\delta>0.$ That is, we get a positive $\varpi$ as soon as $\sigma < 1/19.$ Compare this to \cite[Theorem 5.1(ii)]{polymath} which gives a positive $\varpi$ if $\sigma < 1/4.$ This difference in quality is solely due to the fact that we work with a sparse set of moduli. To maximize the size of $\varpi,$ we want to make $\sigma$ as small as possible. The smallest admissible value of $\sigma$ is determined by numerical computations applying Harman's sieve method below. By optimizing the sieve argument carefully we could work with a slightly smaller value of $\sigma$ which would allow us to take somewhat larger $\varpi$.
\end{remark}

\begin{remark} In principle there is nothing particular about the moduli being perfect squares; with similar arguments one should be able to obtain Type II information for other classes of well-factorable sparse moduli. However, the quality of the estimate decreases rapidly as the density of the moduli set decreases.
\end{remark}

Note that we have a gap $[X^{1/2-2\varpi-\delta},X^{1/2+2\varpi+\delta}]$ in the Type II information. This is due to the fact that the moduli run over a sparse set (cf. Remark \ref{gapremark} at the end of Section \ref{typeiisection}). To compensate for this we require
\begin{prop}\emph{\textbf{(Type I estimate).}} \label{typeipure} Let $\DD$ be as in (\ref{ddefinition}) and let $MN = X$ with
\begin{align*}
M \leq X^{1/2 + 2 \varpi + \delta}.
\end{align*}
Let $\alpha(m)$  be a divisor-bounded function. Then
\begin{align*}
\sum_{d^2 \in \DD} \bigg | \sum_{\substack{mn \equiv a \, (d^2) \\ m \sim M }} \alpha(m)  \psi(mn/X) - \frac{1}{\phi(d^2)} \sum_{\substack{(mn,d^2)=1 \\ m \sim M }} \alpha(m)  \psi(mn/X) \bigg | \, \ll \frac{X^{1-\eta}}{\sqrt{D}}.
\end{align*}
\end{prop}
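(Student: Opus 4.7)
The plan is to apply Poisson summation in the smooth variable $n$ to express the inner difference as an incomplete exponential sum modulo $d^2$, and then bound the resulting sum by Cauchy--Schwarz together with the Polya--Vinogradov completion method.

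First, I would introduce complex signs $\epsilon(d^2)$ with $|\epsilon(d^2)|\leq 1$ to open the outer absolute values, reducing to bounding $E:=\sum_{d^2\in\DD}\epsilon(d^2)T(d^2)$. Restricting to $(m,d^2)=1$ (the complementary contribution is negligible by standard divisor bounds), I apply Poisson summation to the $n$-sum in the residue class $n\equiv a\bar m\pmod{d^2}$ and to the coprime mean. The $h=0$ terms cancel, yielding
\begin{align*}
T(d^2)\;=\;\sum_{\substack{m\sim M\\ (m,d^2)=1}}\alpha(m)\frac{X}{md^2}\sum_{0<|h|\leq H}\hat\psi\!\left(\frac{hX}{md^2}\right)e_{d^2}(ha\bar m)+O(X^{-A}),
\end{align*}
where $H:=X^\eta MD/X\ll X^{4\varpi+\delta+\eta}$. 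The Ramanujan-sum remainder contributes negligibly using $c_{d^2}(h)=0$ for $(h,d)=1$, since $d^2$ is non-squarefree for $d>1$.

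Second, I apply Cauchy--Schwarz in $d^2$, exploiting the sparsity $|\DD|\asymp\sqrt{D}$: $|E|^2\leq|\DD|\sum_{d^2\in\DD}|T(d^2)|^2$. Bounding $|T(d^2)|^2$ via a further Cauchy--Schwarz in $m$ (using the divisor bound $|\alpha(m)|\leq X^\eta$) and expanding the square gives incomplete Kloosterman-type sums $\sum_{m\sim M} m^{-2} e_{q}(c\bar m)$, where $q=\mathrm{lcm}(d_1^2,d_2^2)$ and $c$ is a suitable integer combination of $a$, $h_1$, $h_2$, $d_1^2$, $d_2^2$. These are estimated by the Polya--Vinogradov completion method combined with square-root Weil cancellation: since $d=p_1\cdots p_K$ is a product of distinct primes, the Chinese Remainder Theorem factors the complete exponential sum modulo $q$ into complete sums modulo prime squares $p_j^2$, to which the Cochrane--Zheng bound (as in Section \ref{expsumsection}) applies. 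The diagonal case $h_1 d_2^2=h_2 d_1^2$ forces $h_1=h_2=0$ under the size restriction $|h_i|\leq H<d_i^2$ and is therefore excluded. Aggregating over all quadruples $(d_1,d_2,h_1,h_2)$ and using $\sum_{d^2\in\DD} d^{-1}\ll 1$ (from $d\asymp\sqrt D$ and $|\DD|\asymp\sqrt D$) then yields the target bound $|E|\ll X^{1-\eta}/\sqrt{D}$.

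The main obstacle is the tightness of the numerical margin: since $M$ may be as large as $X^{1/2+2\varpi+\delta}$, which is slightly above the naive Polya--Vinogradov threshold $M\leq d$, the dual range $H$ is a positive power of $X$ and the estimate is saturated in every direction. Closing the bound requires exploiting simultaneously the sparsity of $\DD$ (which provides the factor $\sqrt{D}$ rather than $D$ on the right-hand side compared with the analogous Bombieri--Vinogradov-type sum over all moduli up to $D$), the square-root Weil cancellation modulo each $p_j^2$, and the smooth decay of $\hat\psi$. The smooth factorization $d=p_1\cdots p_K$ is essential for reducing the complete exponential sums to the prime-squared moduli where the Cochrane--Zheng estimates apply.
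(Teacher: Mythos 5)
Your opening move (Poisson summation in the smooth variable $n$, with dual length $H\asymp D/N\ll X^{4\varpi+\delta+\epsilon}$) matches the paper, but after that the argument has a genuine gap: you never use the well-factorability of $d$, and the Cauchy--Schwarz structure you describe cannot close in the critical range $M\in(X^{1/2},X^{1/2+2\varpi+\delta}]$. As written the two halves of your second step are inconsistent: after $|E|^2\le|\DD|\sum_{d^2\in\DD}|T(d^2)|^2$ only a single modulus $d$ survives, so no $\mathrm{lcm}(d_1^2,d_2^2)$ with $d_1\neq d_2$ can appear. On that reading, the inner Cauchy--Schwarz in $m$ leaves the diagonal $h_1=h_2$, which contributes about $|\DD|\cdot|\DD|\cdot\frac{MN^2}{D^2}\cdot HM\asymp \frac{X^2H}{D}$, exceeding the target $X^{2-\eta}/D$ by the factor $H\asymp X^{4\varpi+\delta+\epsilon}$ exactly when $M$ is near its maximum -- and your claim that the diagonal is empty is false: $h_1d_2^2=h_2d_1^2$ is solved by $d_1=d_2$, $h_1=h_2\neq 0$, and even by $d_1\neq d_2$ sharing all but one prime, since $H$ exceeds $P=D^{1/K}$. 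On the other reading (keep the whole $d$-sum inside one Cauchy--Schwarz in $m$, which is the only way two moduli arise), P\'olya--Vinogradov completion is useless for generic pairs: $[d_1^2,d_2^2]^{1/2}\asymp D=X^{1/2+2\varpi}\gtrsim M$, so the completed bound does not beat the trivial one, and the off-diagonal totals $\approx MN^2H^2\asymp MD^2$, which exceeds the target $M^2N^2/D$ by $D^3/(XN)\gtrsim X^{8\varpi}$ when $N\asymp X^{1/2-2\varpi-\delta}$. So ``sparsity of $\DD$ plus Weil/Cochrane--Zheng square-root cancellation'' is not enough; the saving of $X^{4\varpi+\delta}$ over the trivial bound has to come from somewhere else.

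The missing idea is the one the paper builds the proof around (and motivates in its ``Optimizing Cauchy--Schwarz'' discussion): exploit the factorization $d^2=r^2q^2$ with $r^2\in\RR$, $q^2\in\QQ$ and $Q\asymp X^{16\varpi+8\delta}$, bound the $r$-sum trivially (a factor $R^{1/2}$, taking the worst $r$), and apply Cauchy--Schwarz in $m$ keeping \emph{both} the $h$-sum and the $q$-sum inside. Then the expanded off-diagonal exponential sums run over the modulus $r^2[q_1^2,q_2^2]\ll DQ$ only, so Lemma \ref{expsum1lemma} gives $\pprec rq_1q_2+\frac{M}{r^2}(h_1q_2^2-h_2q_1^2,r^2)$, and the diagonal-type term is governed by the gcd $(h_1q_2^2-h_2q_1^2,r^2)$, which Lemma \ref{gcdsum} controls; the choice of $Q$ is precisely what balances the completed-sum term $R^{1/2}Q^2$ against the diagonal term $MQ^{1/2}$ and yields $\hat\Sigma(b)\pprec R^{3/4}QM^{1/2}+R^{1/2}Q^{1/4}M\ll X^{1-\eta}/\sqrt D$ under $30\varpi+12\delta\le 1/2$. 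Without some such intermediate splitting of the modulus (neither ``all of $d$ outside'' nor ``all of $d$ inside'' works, as the two failure computations above show), the estimate cannot be recovered by the tools you list; this, not the prime-square structure handled by Cochrane--Zheng, is where the smooth factorization of $d$ is genuinely needed in the Type I argument.
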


We will prove these three estimates in increasing order of difficulty, Type I/II estimate being the easiest, and Type II being much harder than the other two. In the remainder of this section we apply 
these estimates to give a proof of Theorem \ref{theorem2}.

We combine the first two propositions to get
\begin{prop} \label{funprop} Let $\DD$ be as in (\ref{ddefinition}). Let $U,V \geq 1,$ $U \leq X^{1/2-\sigma}$, $V \leq  X^{1/8 +\sigma/2 -5\varpi /2 - \eta},$ and let $a_u,b_v$ be divisor bounded coefficients. Let $Z= X^{\sigma -2 \varpi - \delta}.$ Then for all but $\mathcal{O}(D^{1/2} X^{-\eta})$ of $d^2  \in \DD$ we have
\begin{align*}
\sum_{\substack{u \sim U \\ v \sim V}} a_u b_v S(\A^d_{uv},Z) = \sum_{\substack{u \sim U \\ v \sim V}} a_u b_v S(\B^d_{uv},Z) + \mathcal{O}\bigg( \frac{X^{1-\eta}}{D} \bigg).
\end{align*}
\end{prop}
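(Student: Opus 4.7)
The strategy is to reduce the proposition to Propositions~\ref{typei} and~\ref{typeii} via a Möbius expansion of the sifting condition. Using $1_{(n, P(Z))=1} = \sum_{e \mid (n, P(Z))} \mu(e)$, one rewrites
\begin{align*}
S(\A^d_{uv}, Z) - S(\B^d_{uv}, Z) = \sum_{e \mid P(Z)} \mu(e) \bigl[ A_d(uve) - B_d(uve) \bigr],
\end{align*}
where $A_d(m) := \sum_{n:\, mn \equiv a\,(d^2)} \psi(mn/X)$ and $B_d(m) := \phi(d^2)^{-1} \sum_{n:\, (mn, d^2)=1} \psi(mn/X)$. The task is to organise the quadruple sum $(u, v, e, n)$ so that the product $uve$ factors as $m \cdot n'$ with sizes matching either the Type~I/II or Type~II hypotheses, to dyadically decompose, and then to apply Markov's inequality to the averaged Type~II bound.

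I would partition by the size of $ue$. In the regime $ue \leq X^{1/2-\sigma}$, set $m := ue$ (with divisor-bounded coefficient $\alpha(m) := \sum_{m=ue} a_u \mu(e)$) and $n := v$, taking the free Möbius variable to be the smooth variable $\ell$ of Proposition~\ref{typei}; since $V \leq X^{1/8+\sigma/2-5\varpi/2-\eta}$, the Type~I/II hypotheses are satisfied, and a dyadic split in $m$ yields a contribution $\ll X^{1-\eta}/D$ uniformly in $d$. In the complementary regime $ue > X^{1/2-\sigma}$, order the primes of $e$ in decreasing order $p_1 > p_2 > \cdots > p_r$ and let $k = k(u, e)$ be the smallest index with $u p_1 \cdots p_k > X^{1/2-\sigma}$. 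Writing $e = e' e''$ with $e' := p_1 \cdots p_k$, the smoothness bound $p_k \leq Z = X^{\sigma-2\varpi-\delta}$ ensures
\begin{align*}
X^{1/2-\sigma} \,<\, m := ue' \,\leq\, X^{1/2-\sigma} \cdot Z \,=\, X^{1/2-2\varpi-\delta},
\end{align*}
so $m$ lies safely below the forbidden gap and hence in the Type~II range; correspondingly $n := v e'' n_0$ (with $n_0$ the free Möbius variable) has complementary size in $[X^{1/2+2\varpi+\delta}, X^{1/2+\sigma})$, also in the Type~II range.

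The constraint that all primes of $e''$ are strictly smaller than $p_k$ couples $m$ to $n$; I would decouple it by a further dyadic decomposition in $p_k$, so that for each dyadic scale $P_0$ the coefficients $\alpha_{P_0}(m) := \sum_{m=ue'} a_u \mu(e')$ and $\beta_{P_0}(n) := \sum_{n=ve''n_0} b_v \mu(e'')$ become divisor-bounded functions of $m$ and $n$ separately. Summing Proposition~\ref{typeii} over the $O(\log^{O(1)} X)$ dyadic pieces produces $\sum_{d^2 \in \DD} |E_B(d)| \ll X^{1-\eta}/\sqrt{D}$ for the Case~B contribution, and Markov's inequality isolates the exceptional set (those $d$ with individual error exceeding $X^{1-\eta}/D$) as having size $O(D^{1/2} X^{-\eta})$. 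The main obstacle is arranging the decomposition so that $m$ cannot land inside the forbidden gap $[X^{1/2-2\varpi-\delta}, X^{1/2+2\varpi+\delta}]$: this is exactly what dictates the very small smoothness threshold $Z \leq X^{\sigma-2\varpi-\delta}$, since otherwise a single prime step could jump from the below-gap region straight across the gap to the above-gap region and leave the Type~II range entirely.
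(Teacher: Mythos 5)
Your overall architecture matches the paper's proof almost exactly: Möbius expansion of $1_{(n,P(Z))=1}$, splitting at $ue\le X^{1/2-\sigma}$ versus $ue>X^{1/2-\sigma}$, treating the first range with Proposition~\ref{typei} (with $m=ue$, $n=v$ and the Möbius-free variable as the smooth variable), and in the second range peeling off primes of $e$ one at a time so that, because every prime factor is $\le Z=X^{\sigma-2\varpi-\delta}$, the product $m$ lands in $(X^{1/2-\sigma},X^{1/2-2\varpi-\delta}]$, i.e.\ inside the Type~II range and below the forbidden gap; the exceptional set then comes from Markov's inequality applied to the averaged bound of Proposition~\ref{typeii}. (The paper adds the primes in increasing rather than decreasing order, but this is immaterial: the same one-step argument $uq_1\cdots q_{j-1}<X^{1/2-\sigma}$, $q_j\le Z$ gives the same containment.)

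The one step that does not work as you state it is the decoupling of the ordering constraint. After the split $e=e'e''$ the only cross-condition is that the smallest prime $p_k$ of $e'$ exceeds every prime of $e''$, and you claim that a dyadic decomposition in $p_k$ makes $\alpha_{P_0}(m)=\sum_{m=ue'}a_u\mu(e')$ and $\beta_{P_0}(n)=\sum_{n=ve''n_0}b_v\mu(e'')$ divisor-bounded functions of $m$ and $n$ \emph{separately}. That is false in the critical case: if some prime of $e''$ lies in the same dyadic block $(P_0,2P_0]$ as $p_k$, the condition still compares that prime with $p_k$ itself, which is determined by the $m$-side decomposition and not by $n$; restricting $p_k$ dyadically does not remove this coupling, and the same-block configurations are not negligible without further argument. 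The paper resolves exactly this point by removing the single cross-condition $q_{j+1}>q_j$ with Perron's formula (cf.\ Harman, Chapter~3), after which Proposition~\ref{typeii} applies to genuinely bilinear coefficients. Alternatively a finer-than-dyadic decomposition with a separate estimate for the boundary terms (both comparison primes in the same short interval) can be made to work, but that requires an extra error analysis you have not supplied; as written, your dyadic-in-$p_k$ decoupling is a genuine gap, though a standard and repairable one.
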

\begin{proof}
The left-hand side is (by using the M\"obius function to detect $(n,P(Z))=1$)
\begin{align*}
& \sum_{\substack{uvn \equiv a \, (d^2) \\ u \sim U, \, \, v \sim V}} a_u b_v \psi(uvn/X) 1_{(n,P(Z))=1}  =  \sum_{\substack{uvek \equiv a \, (d^2) \\ u \sim U, \, \, v \sim V \\ e \, | P(Z)}} a_u b_v \mu(e) \psi(uvek/X) \\
 &  = \sum_{\substack{uvek \equiv a \, (d^2) \\ u \sim U, \, \, v \sim V \\ e \, | P(Z) \\ eu \leq X^{1/2-\sigma}}} a_u b_v \mu(e) \psi(uvek/X) + \sum_{\substack{uvek \equiv a \, (d^2) \\ u \sim U, \, \, v \sim V \\ e \, | P(Z) \\ eu > X^{1/2-\sigma}}} a_u b_v \mu(e) \psi(uvek/X)=: \Sigma_I(\A^d) + \Sigma_{II}(\A^d).
\end{align*}
We split the sum over $\B^d$ similarly into $\Sigma_I(\B^d) + \Sigma_{II}(\B^d).$
For $\Sigma_I$, by Proposition \ref{typei} we have
\begin{align*}
\Sigma_I(\A^d)= \Sigma_I(\B^d) + \mathcal{O}\bigg( \frac{X^{1-\eta}}{D} \bigg),
\end{align*}
if we combine variables $m=eu$, relabel $n=v,$ and split the sums dyadically.

In $\Sigma_{II}$ we note that since $eu > X^{1/2-\sigma}$, $U \leq x^{1/2-\sigma}$ and $e \, | P(Z),$ we can use the greedy algorithm to partition  the sum (writing $e=q_1\cdots q_\ell$ for primes $q_1 < \cdots < q_\ell  \leq Z$)
\begin{align*}
\Sigma_{II}(\A^d) &= \sum_{\substack{uvek \equiv a \, (d^2) \\ u \sim U, \, \, v \sim V \\ e \, | P(Z) \\ eu > X^{1/2-\sigma}}}  a_u b_v \mu(e) \psi(uvek/X) \\
 & = \sum_{\ell \, \ll \log X} (-1)^\ell \sum_{j \leq \ell } \sum_{u,v}  \hspace{-5pt}  \sum_{\substack{q_1 < q_2 < \cdots < q_\ell \leq Z \\  uvq_1 \cdots q_\ell k \equiv a \, (d^2)\\u q_1 q_2 \cdots q_j \in [X^{1/2-\sigma},X^{1/2-2\varpi -\delta} ] \\ u q_1 q_2 \cdots q_{j-1}  < X^{1/2-\sigma}}} \hspace{-25pt} a_u b_v  \psi(uvq_1\cdots q_\ell k/X),
\end{align*}
and similarly for  $\Sigma_{II}(\B^d)$. After writing $m= u  q_1 q_2 \cdots q_j$, $n= k v q_{j+1} q_{j+2} \cdots q_{\ell},$ and removing the cross-condition $q_{j+1 } > q_j$ by Perron's formula (cf. \cite[Chapter 3]{harman}, for instance), we obtain from Proposition \ref{typeii}  that
\begin{align*}
\Sigma_{II}(\A^d)= \Sigma_{II}(\B^d) + \mathcal{O}\bigg( \frac{X^{1-\eta}}{D} \bigg)
\end{align*}
for all but $\mathcal{O}(D^{1/2} X^{-\eta})$ of $d^2 \in \DD$.
\end{proof}

We also require the following variant of the above proposition, obtained by applying Propositions  \ref{typeii} and \ref{typeipure}.
\begin{prop} \label{funprop2} Let $\DD$ be as in (\ref{ddefinition}). Let $U \geq 1,$ $U \leq X^{1/2+ 2\varpi + \delta}$,  and let $a_u$ be a  divisor bounded coefficient. Let $Z= X^{\sigma -2 \varpi - \delta}.$ Then for all but $\mathcal{O}(D^{1/2} X^{-\eta})$ of $d^2  \in \DD$ we have
\begin{align*}
\sum_{\substack{u \sim U }} a_u S(\A^d_{u},Z) = \sum_{\substack{u \sim U}} a_u  S(\B^d_{u},Z) + \mathcal{O}\bigg( \frac{X^{1-\eta}}{D} \bigg).
\end{align*}
\end{prop}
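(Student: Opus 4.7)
The plan is to follow the template of Proposition \ref{funprop} with the Type I/II estimate replaced by the pure Type I estimate (Proposition \ref{typeipure}). First, I would use M\"obius inversion to remove the $(n, P(Z)) = 1$ condition: writing $n = ek$ with $e \mid P(Z)$, the sum $\sum_{u \sim U} a_u S(\A^d_u, Z)$ becomes
\[
\sum_{\substack{uek \equiv a\,(d^2) \\ u \sim U,\ e \mid P(Z)}} a_u \mu(e)\, \psi(uek/X),
\]
and analogously for the $\B^d$ sum. I would partition the range of the pair $(u,e)$ into $eu \leq X^{1/2+2\varpi+\delta}$ (call this $\Sigma_I$) and $eu > X^{1/2+2\varpi+\delta}$ (call this $\Sigma_{II}$), and treat them separately.

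For $\Sigma_I$, combine the variables into $m := eu$ with the divisor-bounded coefficient $\alpha(m) := \sum_{m = eu,\ u \sim U,\ e\mid P(Z)} \mu(e)\, a_u$ and relabel $k$ as $n$. After a dyadic decomposition in $m$ (only $\ll \log X$ scales), the constraint $m \leq X^{1/2+2\varpi+\delta}$ puts each piece squarely in the range of Proposition \ref{typeipure}, which delivers $\Sigma_I(\A^d) = \Sigma_I(\B^d) + \mathcal{O}(X^{1-\eta}/D)$ outside an exceptional set of at most $\mathcal{O}(D^{1/2} X^{-\eta})$ of $d^2 \in \DD$.

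For $\Sigma_{II}$, factor the squarefree $e$ as $e = q_1 q_2 \cdots q_\ell$ with $q_1 < q_2 < \cdots < q_\ell \leq Z$. Since $u \leq X^{1/2+2\varpi+\delta}$ and each $q_i \leq Z = X^{\sigma-2\varpi-\delta}$, the greedy algorithm supplies a unique $j \leq \ell$ with
\[
uq_1 \cdots q_{j-1} \leq X^{1/2+2\varpi+\delta} < uq_1 \cdots q_j \leq (uq_1 \cdots q_{j-1})\cdot Z \leq X^{1/2+\sigma}.
\]
Setting $m := u q_1 \cdots q_j$ and $n := k q_{j+1} \cdots q_\ell$, one has $mn = uek \asymp X$, so $m \in (X^{1/2+2\varpi+\delta}, X^{1/2+\sigma}]$ and $n \in [X^{1/2-\sigma}, X^{1/2-2\varpi-\delta})$; both now lie in the Type II range of Proposition \ref{typeii}. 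After summing over $\ell \ll \log X$ and $j \leq \ell$, removing the ordering constraint $q_j < q_{j+1}$ by Perron's formula (cf.\ \cite[Chapter 3]{harman}), and splitting $m, n$ into dyadic ranges, each resulting bilinear sum is covered by Proposition \ref{typeii}, yielding $\Sigma_{II}(\A^d) = \Sigma_{II}(\B^d) + \mathcal{O}(X^{1-\eta}/D)$ outside a further $\mathcal{O}(D^{1/2} X^{-\eta})$ exceptional moduli.

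The main subtlety is the greedy step: the precise choice $Z = X^{\sigma-2\varpi-\delta}$ is exactly what is required so that the greedy product lands within $(X^{1/2+2\varpi+\delta}, X^{1/2+\sigma}]$, forcing both $m$ and $n$ to avoid the central gap $[X^{1/2-2\varpi-\delta}, X^{1/2+2\varpi+\delta}]$ excluded by Proposition \ref{typeii}. Everything else is repackaging, and the $\log^{O(1)} X$ losses from the dyadic decompositions and Perron smoothing are absorbed by the $X^\eta$ savings in Propositions \ref{typeipure} and \ref{typeii}.
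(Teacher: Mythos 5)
Your proposal is correct and follows essentially the same route as the paper: M\"obius inversion to open the condition $(n,P(Z))=1$, a split at $eu \leq X^{1/2+2\varpi+\delta}$ handled by Proposition \ref{typeipure}, and the greedy factorization of $e=q_1\cdots q_\ell$ (with Perron's formula removing the ordering constraint) placing $uq_1\cdots q_j$ in $[X^{1/2+2\varpi+\delta},X^{1/2+\sigma}]$ so that Proposition \ref{typeii} applies to the remaining part. Your observation that $Z=X^{\sigma-2\varpi-\delta}$ is exactly calibrated to land the greedy product inside the admissible Type II window is precisely the point the paper relies on.
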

\begin{proof}
The left-hand side is
\begin{align*}
& \sum_{\substack{un \equiv a \, (d^2) \\ u \sim U}} a_u  \psi(un/X) 1_{(n,P(Z))=1}  =  \sum_{\substack{uen \equiv a \, (d^2) \\ u \sim U, \\ e \, | P(Z)}} a_u  \mu(e) \psi(uen/X) \\
 &  = \sum_{\substack{uen \equiv a \, (d^2) \\ u \sim U, \\ e \, | P(Z) \\ eu \leq X^{1/2+2 \varpi + \delta} }} a_u  \mu(e) \psi(uen/X) +  \sum_{\substack{uen \equiv a \, (d^2) \\ u \sim U, \\ e \, | P(Z) \\ eu > X^{1/2+2 \varpi + \delta} }} a_u  \mu(e) \psi(uen/X) =: \Sigma_I(\A^d) + \Sigma_{II}(\A^d).
\end{align*}
We split the sum over $\B^d$ similarly into $\Sigma_I(\B^d) + \Sigma_{II}(\B^d).$ The rest of the argument is essentially the same as in the proof of Proposition \ref{funprop}, using Proposition \ref{typeipure} instead of Proposition \ref{typei} to handle $\Sigma_I$. In $\Sigma_{II}$ we split the sums into $e=q_1 \cdots q_\ell$ with $uq_1 \cdots q_j \in  [X^{1/2+2 \varpi +\delta},X^{1/2+\sigma} ]$ and $ u q_1 q_2 \cdots q_{j-1} < X^{1/2+2 \varpi +\delta}$ for some $j \leq \ell$ to obtain Type II sums. 
\end{proof}

\subsection{Buchstab decompositions}
In this section we give the proof of Theorem \ref{theorem2}. All estimates given in this section are interpreted as holding for all but $\mathcal{O}(D^{1/2} X^{-\eta})$ moduli $d^2 \in \DD$.

The general idea of Harman's sieve is to use Buchstab's identity to decompose the sum $S(\C^d,2\sqrt{X})$  (in parallel for $\C^d=\A^d$ and $\C^d=\B^d$) into a sum of the form $\sum_k \epsilon_k S_k(\C^d),$ where $\epsilon_k \in \{-1,1\},$ and  $S_k(\C^d) \geq 0$ are sums over almost-primes.  Since we are interested in a lower bound, for $\C^d=\A^d$ we can insert the trivial estimate $S_k(\A^d) \geq 0$ for any $k$ such that the sign $\epsilon_k =1;$ these sums are said to be discarded. For the remaining $k$ we will obtain an asymptotic formula by using Propositions \ref{typeii}, \ref{funprop}, and \ref{funprop2}. That is, if $\mathcal{K}$ is the set of indices that are discarded, then
\begin{align*}
S(\A^d, 2\sqrt{X})&= \sum_k \epsilon_k S_k(\A^d) \geq \sum_{k \notin \mathcal{K}} \epsilon_k S_k(\A^d)  \\
&\sim \sum_{k \notin \mathcal{K}} \epsilon_k  S_k(\B^d) =  S(\B^d,2\sqrt{X}) -   \sum_{k \in \mathcal{K}}  S_k(\B^d).
\end{align*} 
We are successful if we can then show that $\sum_{k \in \mathcal{K}} S_k(\B^d) \leq (1-\mathfrak{C}(\sigma)) S(\B^d, 2\sqrt{X})$ for some $\mathfrak{C}(\sigma) > 0.$ Obtaining this ultimately determines the smallest admissible exponent $\sigma$ (as $\mathfrak{C}(\sigma)$ is a decreasing function of $\sigma$), which in turn determines the exponent $\varpi$. 

To bound these error terms we need a lemma which converts sums over almost primes into integrals which can be bounded numerically.  Let $\omega(u)$ denote the Buchstab function (cf. \cite[Chapter 1]{harman} for the properties below, for instance), so that by the Prime Number Theorem for $Y^{\epsilon} < z < Y$ 
\begin{align} \label{buchasymp}
\sum_{Y< n \leq 2Y} 1_{(n,P(z))=1} = (1+o(1)) \omega \left(\frac{\log Y}{\log z} \right) \frac{Y}{\log z}.
\end{align}
 Note that for $1< u \leq 2$ we have $\omega(u)=1/u.$ In the numerical computations we will use the following upper bound for the Buchstab function (cf. \cite[Lemma 5]{hbjia}, for instance)
\begin{align*}
\omega(u) \, \leq \begin{cases} 0, &u < 1 \\
1/u, & 1 \leq u < 2 \\
(1+\log(u-1))/u, &2 \leq u < 3 \\
0.5644, &  3 \leq u < 4 \\
0.5617, & u \geq 4.
\end{cases}
\end{align*}
In the lemma below we assume that the range $\mathcal{U}\subset [X^{2 \delta},X]^{k}$ is sufficiently well-behaved, e.g. an intersection of sets of the type $\{ \boldsymbol{u}: u_i < u_j \}$ or $\{\boldsymbol{u}: V <  f(u_1, \dots,u_k) < W\}$ for some polynomial $f$ and some fixed $V,W.$

\begin{lemma} \label{bilemma} Let $\mathcal{U} \subset [X^{2 \delta},X]^{k}.$ Then
\begin{align*}
\sum_{(p_1, \dots , p_k) \in \mathcal{U}} S(\B^d_{p_1, \dots, p_k},p_k) = S(\B^d,2\sqrt{X})(1+\mathcal{O}(\eta))\int \omega (\boldsymbol{\alpha }) \frac{d\alpha_1 \cdots d\alpha_k}{\alpha_1\cdots\alpha_{k-1}\alpha_k^2},
\end{align*}
where the integral is over the range 
\begin{align*}
\{\boldsymbol{\alpha}: \, (X^{\alpha_1}, \dots, X^{\alpha_k}) \in \mathcal{U}\}
\end{align*}
 and $\omega(\boldsymbol{\alpha})= \omega(\alpha_1,\dots,\alpha_k):= \omega((1-\alpha_1-\cdots 
 -\alpha_k)/\alpha_k)$.
\end{lemma}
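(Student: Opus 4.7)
The plan is to evaluate the inner sum over $m$ via the Buchstab-type asymptotic (\ref{buchasymp}) with a smooth weight, and then convert the outer sum over $(p_1,\dots,p_k)$ into an integral via the prime number theorem in dyadic ranges. As a preliminary reduction, every prime dividing $d$ lies in some $I_j$ and hence is at most $2^K P^{1/2} = X^{\delta/4 + o(1)}$, which is strictly smaller than $X^{2\delta} \leq p_k$. Consequently $(p_i, d) = 1$ for each $i$, and the sifting condition $(m, P(p_k)) = 1$ automatically forces $(m, d^2) = 1$; thus
\begin{align*}
S(\B^d_{p_1 \cdots p_k}, p_k) = \frac{1}{\phi(d^2)} \sum_{(m, P(p_k))=1} \psi\left(\frac{p_1 \cdots p_k m}{X}\right).
\end{align*}

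Setting $Y := X/(p_1 \cdots p_k)$ and $\alpha_i := (\log p_i)/\log X$, a smoothed version of (\ref{buchasymp}) (obtained by partial summation against $\psi$) yields, uniformly for $(p_1, \dots, p_k) \in \mathcal{U}$,
\begin{align*}
\sum_{(m, P(p_k))=1} \psi\left(\frac{p_1 \cdots p_k m}{X}\right) = \omega\left(\frac{\log Y}{\log p_k}\right) \frac{Y \int \psi}{\log p_k} \bigl(1 + o(1)\bigr),
\end{align*}
with $\omega(u) = 0$ for $u < 1$. Since $(\log Y)/\log p_k = (1 - \alpha_1 - \cdots - \alpha_k)/\alpha_k$, the Buchstab factor equals $\omega(\boldsymbol{\alpha})$ as defined in the statement.

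Summing over $(p_1, \dots, p_k) \in \mathcal{U}$, pulling out the prefactor $X/(\phi(d^2) \log X)$, and applying the prime number theorem in dyadic intervals to convert the remaining sum over primes into an integral gives
\begin{align*}
\sum_{(p_1, \dots, p_k) \in \mathcal{U}} \frac{\omega(\boldsymbol{\alpha})}{p_1 \cdots p_k \, \alpha_k} = \bigl(1 + o(1)\bigr) \int_{X^{\boldsymbol{\alpha}} \in \mathcal{U}} \omega(\boldsymbol{\alpha}) \frac{d\alpha_1 \cdots d\alpha_k}{\alpha_1 \cdots \alpha_{k-1} \alpha_k^2},
\end{align*}
where the assumed polynomial description of $\mathcal{U}$ is what allows the boundary contribution in the sum-to-integral passage to be absorbed. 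Applying the same smoothed PNT with no sieving at all produces $S(\B^d, 2\sqrt{X}) = \bigl(1 + O(\eta)\bigr) X \int \psi /(\phi(d^2) \log X)$, and combining these the factor $\int \psi$ cancels, producing the claimed identity.

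The main obstacle is securing the $(1 + O(\eta))$ error uniformly in $\boldsymbol{\alpha}$ and $d$, especially in two boundary regions: where $(\log Y)/\log p_k$ is close to $1$ (so that (\ref{buchasymp}) loses precision) and where $\boldsymbol{\alpha}$ is close to the boundary of the scaled region defining $\mathcal{U}$. In both cases the uniform bound $\omega \leq 1$ and the positivity of $\psi$ let us absorb the thin boundary layers (of thickness $\ll \eta$) into the overall $O(\eta)$ error, while the bulk of the range is handled by standard PNT in dyadic intervals.
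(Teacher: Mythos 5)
Your proposal is correct and follows essentially the same route as the paper: evaluate the inner sifted sum via the Buchstab asymptotic (\ref{buchasymp}) with the smooth weight $\psi$, convert the outer prime sums to an integral by the prime number theorem, and change variables $u_j = X^{\alpha_j}$ before comparing with $S(\B^d,2\sqrt{X})$. Your explicit treatment of the coprimality-to-$d^2$ reduction and of the boundary/discontinuity regions is simply a more careful rendering of steps the paper leaves implicit in its $(1+\mathcal{O}(\eta))$ factor.
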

\begin{proof}
By the definition of $\psi$, by (\ref{buchasymp}), and by the Prime Number Theorem, the left-hand side is
\begin{align*}
&\frac{1}{\phi(d^2)} \sum_{(p_1, \dots , p_k) \in \mathcal{U}} \sum_q 1_{(q,P(p_k))=1} \psi(p_1 \cdots p_k q /X) \\
&= (1+\mathcal{O}(\eta))\frac{X}{\phi(d^2)} \sum_{(p_1, \dots , p_k) \in \mathcal{U}} \frac{1}{p_1\cdots p_k \log p_k} \omega \left( \frac{\log(X/(p_1\cdots p_k))}{\log p_k} \right) \\
&= (1+\mathcal{O}(\eta))\frac{X}{\phi(d^2)} \hspace{-5pt}  \sum_{(n_1,\dots,n_k ) \in \mathcal{U}} \frac{1}{n_1\cdots n_k (\log n_1) \dots (\log n_{k-1} )\log^2 n_k} \omega \left( \frac{\log(X /(n_1\cdots n_k))}{\log n_k} \right) \\
&= (1+\mathcal{O}(\eta))\frac{X}{\phi(d^2)}  \int_{\mathcal{U}}  \omega \left( \frac{\log(X/(u_1\cdots u_k))}{\log u_k} \right)   \frac{du_1\cdots du_k}{u_1\cdots u_k (\log u_1) \dots (\log u_{k-1} )\log^2 u_k}\\
&= \frac{(1+\mathcal{O}(\eta))X}{\phi(d^2)\log x}  \int \omega (\boldsymbol{\alpha }) \frac{d\alpha_1 \cdots d\alpha_k}{\alpha_1\cdots\alpha_{k-1}\alpha_k^2} =(1+\mathcal{O}(\eta))S(\B^d,2\sqrt{X}) \int \omega (\boldsymbol{\alpha }) \frac{d\alpha_1 \cdots d\alpha_k}{\alpha_1\cdots\alpha_{k-1}\alpha_k^2}
\end{align*}
by the change of variables $u_j=X^{\alpha_j}$.
\end{proof}
\begin{remark} The factor $\int \omega(\boldsymbol{\alpha}) \frac{ d \alpha_1\cdots d\alpha_k }{\alpha_1\cdots\alpha_{k-1}\alpha_k^2}$ is called the deficiency of the corresponding sum. By the above lemma it is up to a factor of $(1+\mathcal{O}(\eta))$ the ratio of the sum to $S(\B^d,2\sqrt{X})$.
\end{remark}

 We are now ready to prove Theorem \ref{theorem2}: Let $Z= X^{\sigma -2 \varpi - \delta}$ and $\alpha:= 1/8 + \sigma/2- 5 \varpi/2 -\eta.$ For $\C^d\in \{\A^d,\B^d\}$, we use Buchstab's identity twice to get
\begin{align*}
S(\C^d,2\sqrt{X}) = S(\C^d,Z) - \sum_{Z < p \leq 2\sqrt{X}} S(\C^d_p, Z) + \sum_{\substack{Z < p_2 < p_1 \leq  2\sqrt{X} \\ p_1p_2^2 \leq X}} S(\C^d_{p_1 p_2}, p_2).
\end{align*}

By Proposition \ref{funprop}, we have
\begin{align*}
S(\A^d,Z) = S(\B^d,Z) + \mathcal{O}(X^{1-\eta}/D )
\end{align*}
and
\begin{align*}
\sum_{Z < p \leq X^{1/2-\sigma}} S(\A^d_p, Z) = \sum_{Z < p \leq X^{1/2-\sigma}} S(\B^d_p, Z) + \mathcal{O}(X^{1-\eta}/D ).
\end{align*}
(recall our convention that in this section all estimates are interpreted as holding for all but $\mathcal{O}(D^{1/2} X^{-\eta})$ of $d$ with $d^2 \in \DD$). By Proposition  \ref{funprop2} we obtain
\begin{align*}
\sum_{ X^{1/2-\sigma} < p \leq  2\sqrt{X} } S(\A^d_p, Z) = \sum_{X^{1/2-\sigma} < p \leq  2\sqrt{X} } S(\B^d_p, Z) + \mathcal{O}(X^{1-\eta}/D ),
\end{align*}
so that in  the first two sums we get asymptotic formulas.

For the third sum we write
\begin{align*}
 \sum_{\substack{Z < p_2 < p_1 \leq 2\sqrt{X} \\ p_1p_2^2 \leq X}} S(\C^d_{p_1 p_2}, p_2) = S_1(\C^d) +  S_2(\C^d) + S_3(\C^d),
\end{align*}
where
\begin{align*}
S_1(\C^d):=  \sum_{\substack{Z < p_2 < p_1 \leq  2\sqrt{X} \\ p_1p_2 \leq X^{1/2-\sigma} \\ p_2 \leq X^\alpha \,\, \text{or} \,\, p_1 p_2^2 \leq X^{1/2+\sigma}}} & S(\C^d_{p_1 p_2}, p_2), \quad \quad  
S_2(\C^d):=  \sum_{\substack{Z < p_2 < p_1 \leq  2\sqrt{X} \\ p_1p_2 \leq X^{1/2-\sigma} \\ p_2 > X^\alpha\,\, \text{and} \,\, p_1 p_2^2 > X^{1/2+\sigma}}} S(\C^d_{p_1 p_2}, p_2), \\ \text{and} \quad \quad \quad
& S_3(\C^d):=  \sum_{\substack{Z < p_2 < p_1 \leq  2\sqrt{X} \\ p_1p_2 > X^{1/2-\sigma} \\ p_1 p_2^2 \leq X}} S(\C^d_{p_1 p_2}, p_2).
\end{align*}

\subsubsection{Sum $S_1(\C^d)$} Let $\U(i)$ denote the range for the sum $S_{i}(\C^d)$ (similarly for $\U(i,j)$ and $\U(i,j,k)$ below).
We apply Buchstab's identity twice to get
\begin{align*}
S_{1}(\C^d) &= \sum_{(p_1,p_2) \in \U(1)} S(\C^d_{p_1 p_2}, Z) - \sum_{ \substack{(p_1,p_2) \in \U(1) \\ Z < p_3 < p_2 \\ p_1p_2p_3^2 \leq X }} S(\C^d_{p_1 p_2p_3}, Z) +\sum_{ \substack{(p_1,p_2) \in \U(1) \\ Z <p_4 < p_3 < p_2 \\ p_1p_2p_3^2 \leq X, \, \,  p_1p_2p_3p_4^2 \leq X}} S(\C^d_{p_1 p_2p_3 p_4}, p_4) \\
& =: S_{1,1}(\C^d) -S_{1,2}(\C^d) +S_{1,3}(\C^d).
\end{align*}

In the first sum we get an asymptotic formula by Proposition \ref{funprop}. In the second sum, we get an asymptotic formula by Proposition \ref{funprop2} if $p_2 \leq X^\alpha$, since then $p_3 \leq X^\alpha$ and $p_1p_2 \leq X^{1/2- \sigma}$. In the remaining part we have $p_1 p_2p_3 < p_1p_2^2 \leq X^{1/2+\sigma},$ so that we get an asymptotic formula by applying Propositions \ref{typeii} and \ref{funprop2}.

For the third sum we write $S_{1,3}(\C^d)=S_{1,3,1}(\C^d)+S_{1,3,2}(\C^d),$ where
\begin{align*}
S_{1,3,1}(\C^d) :=  \hspace{-15pt} \sum_{\substack{(p_1,p_2,p_3,p_4) \in \U(1,3) \\ p_1p_2p_3p_4 \leq X^{1/2-\sigma}}} S(\C^d_{p_1 p_2p_3 p_4}, p_4) \quad \text{and} \quad S_{1,3,2}(\C^d):= \hspace{-15pt} \sum_{\substack{(p_1,p_2,p_3,p_4) \in \U(1,3) \\ p_1p_2p_3p_4 > X^{1/2-\sigma}}} S(\C^d_{p_1 p_2p_3 p_4}, p_4).
\end{align*}

\textbf{Sum $S_{1,3,1}(\C^d)$.} We apply Buchstab's identity twice to obtain
\begin{align*}
S_{1,3,1}(\C^d) = \sum_{\substack{(p_1,p_2,p_3,p_4) \in \U(1,3,1)}} & S(\C^d_{p_1 p_2p_3 p_4}, Z) - \sum_{\substack{(p_1,p_2,p_3,p_4) \in \U(1,3,1) \\ Z < p_5 < p_4 \\ p_1p_2p_3p_4p_5^2 \leq X}} S(\C^d_{p_1 p_2p_3 p_4 p_5}, Z)  \\
& + \sum_{\substack{(p_1,p_2,p_3,p_4) \in \U(1,3,1) \\ Z < p_6 < p_5 < p_4 \\ p_1p_2p_3p_4p_5^2 \leq X \\ p_1p_2p_3p_4p_5 p_6^2 \leq X }} S(\C^d_{p_1 p_2p_3 p_4 p_5 p_6}, p_6)  
\end{align*}
Since $p_1p_2p_3p_4 \leq X^{1/2-\sigma},$ we have $p_5 < p_4 < (p_1p_2p_3p_4)^{1/4} < X^{\alpha}.$ Hence, the first two sums have asymptotic formulas by Proposition \ref{funprop}.  In the third sum we apply Proposition \ref{typeii} to the parts where a combination of the variables is in the Type II range and discard the rest, which gives us by Lemma \ref{bilemma} a deficiency
\begin{align*}
 \mathcal{O}(\delta) +  \int f_{1,3,1} (\bm{\alpha}) \omega( \bm{\alpha}) \frac{d \alpha_1d \alpha_2d \alpha_3d \alpha_4d \alpha_5d \alpha_6 }{\alpha_1 \alpha_2 \alpha_3 \alpha_4 \alpha_5 \alpha_6^2} < 0.0095,
\end{align*}
where $f_{1,3,1}$ is the characteristic function of the six dimensional set
\begin{align*}
\{(\bm{\alpha}):  \sigma & - 2 \varpi < \alpha_6 <  \cdots < \alpha_1 < 1/2 - \sigma, \quad  \alpha_1 + \alpha_2 + \alpha_3 +\alpha_4 \leq 1/2 -\sigma, \\
& 2\alpha_2 \leq \max\{ 2\alpha, 1/2+ \sigma - \alpha_1 \}, \quad \max_{k \leq 6} \bigg\{ \alpha_k + \sum_{j \leq k} \alpha_j  \bigg \} \leq 1, \\
& \hspace{200pt}\sum_{j \in I} \alpha_j \notin \J  \,\,\text{for all} \,\, I \subseteq \{1,\dots,6\}  \},
\end{align*}
where $\J := [1/2-\sigma,1/2+\sigma] \setminus [1/2-2 \varpi,1/2+2 \varpi] $. (For the codes used to bound the integrals, see the codepad links at the end of this section).

\textbf{Sum $S_{1,3,2}(\C^d)$.}  We first apply Buchstab's identity upwards to get
\begin{align*}
S_{1,3,2}(\C^d) =   \sum_{\substack{ (p_1,p_2,p_3,p_4) \in \U(1,3,2) }}  S(\C^d_{p_1 p_2p_3p_4}, 2\sqrt{X/p_1p_2p_3p_4}) + \sum_{\substack{ (p_1,p_2,p_3,p_4) \in \U(1,3,2) \\ p_4 < p_5 \leq 2\sqrt{X/p_1p_2p_3p_4} }}  S(\C^d_{p_1 p_2 p_3p_4p_5}, p_5)
\end{align*}
so that in the first sum the implicit variable runs over primes. We apply Proposition \ref{typeii} when we have a variable in the Type II range and discard the rest, which leaves us with  deficiencies (cf. Lemma \ref{bilemma})
\begin{align*}
 \mathcal{O}(\delta) +  \int f_{1,3,2} (\bm{\alpha}) \frac{d \alpha_1d \alpha_2d \alpha_3d \alpha_4 }{(1-\alpha_1-\alpha_2-\alpha_3-\alpha_4)\alpha_1 \alpha_2 \alpha_3  \alpha_4} < 0.016,
\end{align*}
and
\begin{align*}
 \mathcal{O}(\delta) +  \int g_{1,3,2} (\bm{\alpha}) \omega( \bm{\alpha}) \frac{d \alpha_1d \alpha_2d \alpha_3d \alpha_4 d \alpha_5 }{\alpha_1 \alpha_2 \alpha_3  \alpha_4 \alpha_5^2} < 0.0038,
\end{align*}
where  $f_{1,3,2}$ is the characteristic function of the four dimensional set
\begin{align*}
\mathcal{V}:= \{\bm{\alpha}: & \, \sigma- 2 \varpi < \alpha_4 < \alpha_3 < \alpha_2 < \alpha_1 < 1/2 - \sigma, \quad  \alpha_1 + \alpha_2 \leq 1/2 -\sigma, \\
&  2\alpha_2 \leq \max\{ 2\alpha, 1/2+ \sigma - \alpha_1 \}, \quad \max_{k \leq 4} \bigg\{ \alpha_k + \sum_{j \leq k} \alpha_j  \bigg \} \leq 1,
\\ &  \alpha_1 + \alpha_2  + \alpha_3 + \alpha_4 \geq 1/2 - 2 \varpi, \quad \sum_{j \in I} \alpha_j \notin \J  \,\,\text{for all} \,\, I \subseteq \{1,2,3,4\}  \},
\end{align*}
and $g_{1,3,2}$ is the characteristic function of the five dimensional set
\begin{align*}
\{ \bm{\alpha}: (\alpha_1,\alpha_2,\alpha_3,\alpha_4) \in \mathcal{V}, \quad \alpha_4 < \alpha_5 \leq &(1- \alpha_1 -\alpha_2 -\alpha_3-\alpha_4)/2, \\
& \sum_{j \in I} \alpha_j \notin \J  \,\,\text{for all} \,\, I \subseteq \{1,2,3,4,5\}   \}
\end{align*}

\textbf{Total deficiency for $S_1(\C^d)$.} The total deficiency is $< 0.0095+0.016+0.0038 = 0.0293$, so that 
\begin{align*}
S_{1}(\A^d) \geq S_{1}(\B^d) - 0.0293 \cdot S(\B^d,2\sqrt{X}).
\end{align*}

\subsubsection{Sum $S_2(\C^d)$}
Here we apply Buchstab's identity upwards to get
\begin{align*}
S_{2}(\C^d) & =  \sum_{\substack{ (p_1,p_2) \in \U(2) }} S(\C^d_{p_1 p_2}, p_2) \\
&=  \sum_{\substack{ (p_1,p_2) \in \U(2) }}  S(\C^d_{p_1 p_2}, 2\sqrt{X/p_1p_2}) + \sum_{\substack{ (p_1,p_2) \in \U(2) \\ p_2 < p_3 \leq 2\sqrt{X/p_1p_2} }}  S(\C^d_{p_1 p_2 p_3}, p_3),
\end{align*}
so that in the first sum the implicit variable runs over primes. In the second sum we use Proposition \ref{typeii} to handle parts where some combination of variables is in the Type II range and discard the rest. This leads to deficiencies
\begin{align*}
\mathcal{O}(\delta) +  \int f_{2}(\bm{\alpha}) \frac{d \alpha_1 d \alpha_2 }{(1-\alpha_1 - \alpha_2)\alpha_1 \alpha_2 }  < 0.155
\end{align*}
and 
\begin{align*}
\mathcal{O}(\delta) +  \int g_{2}(\bm{\alpha}) \omega(\bm{\alpha})\frac{d \alpha_1 d \alpha_2 d \alpha_3}{\alpha_1 \alpha_2 \alpha_3^2}  < 0.0456.
\end{align*}
Here $f_{2}$ is the characteristic function of the two dimensional set
\begin{align*}
\mathcal{W} := \{ \bm{\alpha}: \sigma - 2 \varpi < \alpha_2 < \alpha_1, \quad \alpha_1+\alpha_2 \leq 1/2-\sigma, \quad \alpha_2 >\alpha, \quad \alpha_1 +2 \alpha_2 > 1/2 + \sigma \},
\end{align*}
and $g_{2}$ is the characteristic function of the three dimensional set
\begin{align*}
\{ \bm{\alpha}: (\alpha_1,\alpha_2) \in \mathcal{W}, \quad \alpha_2 < \alpha_3 \leq (1- \alpha_1 -\alpha_2)/2, \quad \alpha_1 + \alpha_3 \notin \J,  \quad \alpha_2 + \alpha_3 \notin \J  \}
\end{align*}
\textbf{Total deficiency for $S_2(\C^d)$.}  The total deficiency is $< 0.155+0.0456=0.2006$, so that 
\begin{align*}
S_{2}(\A^d) \geq S_{2}(\B^d) - 0.2006 \cdot S(\B^d,2\sqrt{X}).
\end{align*}

\subsubsection{Sum $S_3(\C^d)$}
Here we get an asymptotic formula by Proposition \ref{typeii} if $p_1p_2$ is in the Type II range. We discard the rest, which gives a deficiency
\begin{align*}
\mathcal{O}(\delta) +  \int f_3(\bm{\alpha}) \omega(\bm{\alpha}) \frac{d \alpha_1 d \alpha_2}{\alpha_1 \alpha_2^2} < 0.71153,
\end{align*}
where $f_3$ is the characteristic function of the two dimensional set
\begin{align*}
\{\bm{\alpha}: \sigma -2 \varpi < \alpha_2 < \alpha_1 < 1/2, \quad \alpha_1 + \alpha_2 > 1/2 - 2 \varpi, \quad \alpha_1 + 2\alpha_2 < 1, \quad \alpha_1, \alpha_1 + \alpha_2 \notin \J \}.
\end{align*}
Hence,
\begin{align*}
S_{3}(\A^d) \geq S_{3}(\B^d) - 0.71153 \cdot S(\B^d,2\sqrt{X}).
\end{align*}

\begin{remark} Here we could make the deficiency smaller by applying the role reversal technique to the part where $p_2 \leq x^\alpha$ (cf. for instance \cite[Chapter 5.3]{harman}). However, since in most parts of the discarded sum we have $p_1 \leq X^{1/2-\sigma}$ and $p_1p_2 > X^{1/2+\sigma}$, this is already a narrow range.
\end{remark}

\subsubsection{Proof of Theorem \ref{theorem2}}
By combining the above, we find that for all but $\mathcal{O}(D^{1/2} X^{-\eta})$ moduli $d_2 \in \DD$ we have
\begin{align*}
S(\A^d,2\sqrt{X}) &= S(\A^d,Z) - \sum_{Z < p \leq 2\sqrt{X}} S(\A^d_p, Z) +   S_1(\A^d) +  S_2(\A^d) + S_3(\A^d) \\
&\geq S(\B^d,Z) - \sum_{Z < p \leq 2\sqrt{X}} S(\B^d_p, Z) +   S_1(\B^d) +  S_2(\B^d) + S_3(\B^d)  \\
& \hspace{150pt} - (0.0293  + 0.2006 + 0.71153) \cdot S(\B^d,2\sqrt{X}) \\
&=(1-0.0293  - 0.2006 -0.71153)\cdot S(\B^d,2\sqrt{X}) >  0.05 \cdot S(\B^d,2\sqrt{X}).
\end{align*}
\qed
\begin{remark} If we did not have a gap in the Type II information, that is, if we had Proposition \ref{typeii} for $X^{1/2-\sigma} \ll M,N \ll X^{1/2+\sigma},$ we could apply the buchstab identity with $Z=X^{2\sigma}$ in the argument. However, the gap causes only technical difficulties for small $\varpi$ since Harman's sieve method should depend continuously on the quality of the arithmetical information. To further justify this, note that if we have variables in the range $[X^{\sigma - 2\varpi - \delta},X^{2 \sigma}],$ then we can reduce the deficiency by further applications of Buchstab's identity (as was done for the sum $S_{1,3,1}(\C^d)$). Notice also that in most of the discarded parts in the sum $S_3(\C^d)$ we have $p_1 \leq X^{1/2-\sigma}$ and $p_1p_2 > X^{1/2+\sigma}$, which implies $p_2 > X^{2 \sigma}.$ 
\end{remark}

The Python 3.7 codes for computing the Buchstab integrals are available at (in the order of appearance)

\begin{tabular}{ c c }
$S_{1,3,1}$ & \url{http://codepad.org/QtJ3a0kq} \\
$S_{1,3,2}$, four dimensional prime part & \url{http://codepad.org/CBFtL1Tr} \\
$S_{1,3,2}$, five dimensional almost-prime part & \url{http://codepad.org/5FJpMXK6} \\
$S_{2}$, two dimensional prime part & \url{http://codepad.org/PF7WN4jj} \\
$S_{2}$, three dimensional almost-prime part & \url{http://codepad.org/EMFSgTzN} \\
$S_{3}$ & \url{http://codepad.org/lSHctNzv}
\end{tabular}

\section{Incomplete exponential sums} \label{expsumsection}
In this section we give bounds for incomplete exponential sums of type
\begin{align*}
S= \sum_{n} \psi_N(n) e_q(f(n)),
\end{align*}
where $q$ is a cube-free integer, $\psi_N$ is a smooth function with support of length $N,$ and $f=f_1/f_2$ with $f_1,f_2 \in \Z[X].$ These bounds are required for the proofs of Propositions \ref{typei}, \ref{typeii}, and \ref{typeipure}. The arguments are similar to those in \cite[Section 4]{polymath}, except that since we are considering distribution modulo squares, we need to deal with cube free moduli instead of squarefree. We complete the sum by using the P\'olya-Vinogradov method (Lemma \ref{expsum1lemma}) or Heath-Brown's $q$-van der Corput method (Lemma \ref{expsum2lemma}), which means that we require bounds for complete exponential sums
\begin{align*}
 \sum_{n \in \Z/p^m \Z} e_{p^m}(g(n)),
\end{align*}
where $m \in \{1,2\}.$ For $m=1$ we use the Weil bound similarly as in \cite[Section 4]{polymath} to show square root cancellation. For $m=2$ we use the work of Cochrane and Zheng \cite{cz} (cf. Lemma \ref{czlemma} below). Using  the P\'olya-Vinogradov method, for $N<q$ and $(q,f)=1$, we get  a bound $|S| \,\pprec q^{1/2}.$ If the modulus factorizes suitably $q=rs,$ then the $q$-van der Corput method gives a bound $|S| \, \pprec N^{1/2}(r +s^{1/2})^{1/2}.$ We can use smoothness of the modulus $q$ to optimize the factorization, which yields $|S| \pprec N^{1/2} q^{1/6} X^{\delta/6}.$ This is better than the P\'olya-Vinogradov bound if $N$ is a bit less than $q^{2/3}.$  For further discussion of the methods used, we refer to \cite[Section 4]{polymath}.

\subsection{Preliminaries}
 We first record some auxiliary results which can be found in \cite{polymath}:
\begin{lemma} (cf. \cite[Lemma 1.4]{polymath}) \label{gcdsum} Let $L \geq 1.$ For any integer $q \neq 0$ we have
\begin{align*}
\sum_{1 \leq \ell \leq L} (\ell, q) \leq \tau(q) L.
\end{align*}
\end{lemma}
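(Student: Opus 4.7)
The plan is to replace $(\ell,q)$ by a divisor sum and swap the order of summation. Specifically, I would use the classical identity $n=\sum_{d\mid n}\phi(d)$ to write
\[
(\ell,q)=\sum_{d\mid (\ell,q)}\phi(d)=\sum_{\substack{d\mid q\\ d\mid \ell}}\phi(d).
\]
Substituting into the sum and exchanging the two summations yields
\[
\sum_{1\leq \ell\leq L}(\ell,q)=\sum_{d\mid q}\phi(d)\,\#\{\ell:1\leq\ell\leq L,\ d\mid \ell\}=\sum_{d\mid q}\phi(d)\lfloor L/d\rfloor.
\]

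From here it is a one-line estimation: use $\phi(d)\leq d$ and $\lfloor L/d\rfloor\leq L/d$ to obtain $\phi(d)\lfloor L/d\rfloor\leq L$, so that the sum is bounded by $L\cdot\#\{d:d\mid q\}=L\tau(q)$, which is exactly the claimed inequality. An alternative but essentially equivalent route is to partition $\{1,\dots,L\}$ according to the exact value of $(\ell,q)$: for each divisor $d\mid q$ there are at most $L/d$ values of $\ell\leq L$ with $d\mid \ell$, so
\[
\sum_{1\leq \ell\leq L}(\ell,q)\leq \sum_{d\mid q}d\cdot(L/d)=L\tau(q),
\]
avoiding even the Euler-$\phi$ identity.

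There is no serious obstacle here; the only thing to notice is that one must keep track of the floor function carefully and use the trivial bounds $\phi(d)\leq d$ and $\lfloor L/d\rfloor\leq L/d$ in the right order so that the factors of $d$ cancel, leaving precisely $L$ per divisor. The role of the hypothesis $q\neq 0$ is merely to ensure $\tau(q)$ is defined as the usual divisor function on $|q|$.
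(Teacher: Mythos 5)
Your proof is correct; the paper itself gives no argument for this lemma but simply cites \cite[Lemma 1.4]{polymath}, whose one-line proof is exactly your second route (group the $\ell \leq L$ by the value of $d=(\ell,q)$, count at most $L/d$ of them for each divisor $d$ of $q$, and sum). Your first route via $n=\sum_{d\mid n}\phi(d)$ is an equivalent, equally valid variant.
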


\begin{lemma} \emph{\textbf{(Chinese Remainder Theorem).}} (cf. \cite[Lemma 4.4]{polymath}) \label{crt} Let $q_1,q_2, \dots, q_k$ be pairwise coprime positive integers and $q=q_1\cdots q_k.$ Then for any integers $a$ and $b$
\begin{align*}
e_q (a/b) = \prod_{j=1}^k e_{q_j} \bigg( \frac{a}{bq/q_j}\bigg),
\end{align*}
and for any rational function $f=f_1/f_2$ with $f_1,f_2 \in \Z[X]$ we have
\begin{align*}
\sum_{n \in \Z / q \Z} e_q (f(n)) = \prod_{j=1}^k \bigg(  \sum_{n \in \Z / q_j \Z} e_{q_j} \bigg( \frac{f(n)}{q/q_j} \bigg) \bigg).
\end{align*}
\end{lemma}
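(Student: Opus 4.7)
The plan is to reduce both identities to the additive partial-fraction decomposition
\begin{align*}
\frac{1}{q} \equiv \sum_{j=1}^k \frac{R_j}{q_j} \pmod 1,
\end{align*}
where $R_j$ is any integer satisfying $R_j \cdot (q/q_j) \equiv 1 \pmod{q_j}$ (well-defined since $(q/q_j, q_j)=1$). To prove this, set $m_j := (q/q_j) R_j$, so that $m_j \equiv 1 \pmod{q_j}$ and $m_j \equiv 0 \pmod{q_i}$ for every $i \neq j$; by the usual CRT this forces $\sum_{j=1}^k m_j \equiv 1 \pmod q$, and dividing through by $q$ yields the displayed congruence.

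For the first identity, first suppose $(b,q) = 1$, so that the inverse $\overline{b}$ exists modulo $q$. Multiplying the displayed congruence by $a \overline{b}$ and observing that $\overline{b}$ also serves as the inverse of $b$ modulo each $q_j$, the right-hand side becomes $\sum_{j=1}^k a \overline{b} R_j / q_j \equiv \sum_{j=1}^k a/(b \cdot q/q_j) \pmod 1$ in the paper's convention, since the product of the two inverses $\overline{b}$ and $R_j$ modulo $q_j$ is the inverse of $b \cdot (q/q_j)$ modulo $q_j$. Exponentiating both sides then gives the claimed identity. If instead $(b,q) > 1$, then any prime dividing $(b,q)$ lies in exactly one $q_j$, forcing $(b, q_j) > 1$ for that $j$; both sides of the first identity then vanish by the paper's convention.

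For the second identity, use the CRT ring isomorphism $\Z/q\Z \cong \prod_{j=1}^k \Z/q_j\Z$, $n \leftrightarrow (n_1, \ldots, n_k)$ with $n_j := n \bmod q_j$, to reparametrize the outer sum as an iterated sum over the $n_j$. Since $f_1, f_2 \in \Z[X]$, each reduction $f_i(n) \bmod q_j$ depends only on $n_j$, so applying the first identity pointwise with $a = f_1(n)$ and $b = f_2(n)$ gives
\begin{align*}
e_q(f(n)) = \prod_{j=1}^k e_{q_j}(f(n_j)/(q/q_j))
\end{align*}
whenever $(f_2(n), q) = 1$, with both sides vanishing otherwise. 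Factoring this product through the iterated sum and relabelling each $n_j$ as $n$ produces the claimed identity.

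The main subtlety, such as it is, is purely notational bookkeeping around the convention $e_q(a/b) := 0$ for non-invertible $b$: one must verify that whenever the left-hand side of either identity vanishes by this convention, at least one factor on the right-hand side also does, which is immediate from the pairwise coprimality of the $q_j$. Beyond this, the proof amounts to the standard fact that CRT intertwines the additive and multiplicative structures of $\Z/q\Z$.
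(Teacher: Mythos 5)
Your proof is correct. The paper itself gives no proof of this lemma, citing it directly from \cite[Lemma 4.4]{polymath}; your argument via the partial-fraction identity $\frac{1}{q} \equiv \sum_{j} R_j/q_j \pmod 1$ with $R_j(q/q_j) \equiv 1 \,\,(q_j)$, together with the pointwise factorization and the check that the zero convention for non-invertible denominators is consistent on both sides, is exactly the standard argument behind that reference, so nothing further is needed.
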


\begin{lemma} \label{complete}\emph{\textbf{(Completion of sums).}} (cf. \cite[Lemma 4.9(i), (4.14)]{polymath}) Let $M \geq 1$ and let $\psi_M$ be a function on $\R$ defined by
\begin{align*}
\psi_M(x)= \psi \bigg( \frac{x-x_0}{M} \bigg),
\end{align*}
where $\psi$ is a $C^\infty$-smooth function supported on some compact interval $[c,C]$ satisfying
\begin{align*}
\psi^{(j)}(x) \, \ll \log^{\mathcal{O}_j(1)} M
\end{align*}
for all $j \geq 0.$ Let
\begin{align*}
M' := \sum_m \psi_M(m) \, \ll M \log^{\mathcal{O}(1)}M.
\end{align*}
Then for any integer $q \geq 1$ and function $f: \Z / q \Z \to \mathbb{C}$ we have
\begin{align*}
\bigg | \sum_m  \psi_M(m) f(m) - \frac{M'}{q} \sum_{m \in \Z/q \Z} f(m) \bigg| \, \ll_{A,\epsilon} \, (\log^{\mathcal{O}(1)}M) \frac{M}{q}\sum_{0 < |\xi| \leq qM^{-1+\epsilon}} \bigg| \sum_{m \in \Z/q \Z} f(m) e_q(\xi m) \bigg | \\
+ M^{-A}\bigg| \sum_{m \in \Z/q \Z} f(m) \bigg|.
\end{align*}
\end{lemma}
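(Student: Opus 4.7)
The plan is to use discrete Fourier analysis on $\Z/q\Z$ in the variable of $f$, combined with Poisson summation in the variable of the smooth weight $\psi_M$. First I would write the discrete Fourier inversion
\begin{align*}
f(m) = \frac{1}{q} \sum_{\xi \in \Z/q\Z} \widehat{f}(\xi)\, e_q(-\xi m), \qquad \widehat{f}(\xi) := \sum_{m \in \Z/q\Z} f(m)\, e_q(\xi m),
\end{align*}
and substitute into $\sum_m \psi_M(m) f(m)$, interchanging the order of summation to obtain
\begin{align*}
\sum_m \psi_M(m) f(m) \;=\; \frac{1}{q} \sum_{\xi \in \Z/q\Z} \widehat{f}(\xi)\, T(\xi), \qquad T(\xi) := \sum_{m \in \Z} \psi_M(m)\, e_q(-\xi m).
\end{align*}
The goal is then to evaluate $T(\xi)$ asymptotically: the term $\xi = 0$ must match the main term $M' \widehat{f}(0)/q$, and all remaining $\xi \neq 0$ must be controlled.

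Next I would apply Poisson summation to the sum $T(\xi)$, using that $\psi_M(x) = \psi((x-x_0)/M)$. Writing $\widehat{\psi}$ for the continuous Fourier transform of $\psi$, Poisson gives
\begin{align*}
T(\xi) \;=\; M \sum_{k \in \Z} e\!\left(-x_0\!\left(k + \tfrac{\xi}{q}\right)\right) \widehat{\psi}\!\left(M\!\left(k + \tfrac{\xi}{q}\right)\right).
\end{align*}
The hypothesis $\psi^{(j)}(x) \ll \log^{\mathcal{O}_j(1)}M$ on a fixed compact support yields, via repeated integration by parts, the Schwartz-type bound $\widehat{\psi}(s) \ll_A \log^{\mathcal{O}(1)}M \cdot (1+|s|)^{-A}$ for every $A \geq 1$. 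Representing each nonzero $\xi$ by its symmetric residue in $(-q/2, q/2]$, so that $\|\xi/q\| = |\xi|/q$, the $k=0$ term dominates and
\begin{align*}
|T(\xi)| \;\ll_A\; \log^{\mathcal{O}(1)}M \cdot M\bigl(1 + M|\xi|/q\bigr)^{-A} \;+\; M^{-A}.
\end{align*}
In particular $T(0) = M \widehat{\psi}(0) + \mathcal{O}_A(M^{-A}) = M' + \mathcal{O}_A(M^{-A})$, which isolates the main term and absorbs the discrepancy $|T(0) - M'|\,|\widehat{f}(0)|$ into the $M^{-A}|\sum_m f(m)|$ error.

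Finally I would split the remaining $\xi \neq 0$ according to whether $|\xi| \leq qM^{-1+\epsilon}$. For the small frequencies I use the trivial bound $|T(\xi)| \leq \sum_m |\psi_M(m)| \ll M \log^{\mathcal{O}(1)}M$, contributing
\begin{align*}
\frac{\log^{\mathcal{O}(1)}M \cdot M}{q} \sum_{0 < |\xi| \leq qM^{-1+\epsilon}} |\widehat{f}(\xi)|,
\end{align*}
which matches the first error term in the statement. For $|\xi| > qM^{-1+\epsilon}$ the decay bound on $T(\xi)$ yields $|T(\xi)| \ll_{A,\epsilon} M^{-A}$ (by taking $A$ large relative to $1/\epsilon$), and using the trivial bound $|\widehat{f}(\xi)| \leq \sum_m |f(m)|$ together with the sum $\sum_\xi 1 \leq q$, this contribution is negligible and can be absorbed into the second error term at the cost of adjusting $A$. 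The only nontrivial obstacle is keeping the $\log^{\mathcal{O}(1)}M$ factors from the non-uniformly-bounded $\psi$ under control, which is handled uniformly by the integration-by-parts bound on $\widehat{\psi}$; everything else is bookkeeping. Collecting the three contributions yields the claimed inequality.
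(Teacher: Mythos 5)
Your argument is the standard completion-of-sums proof: discrete Fourier inversion on $\Z/q\Z$, Poisson summation applied to the smooth weight $\psi_M$, and integration by parts to get rapid decay of $\widehat{\psi}$. The paper gives no proof of this lemma (it is quoted from the Polymath project), and your route is essentially the cited one. The Fourier expansion, the Poisson formula for $T(\xi)$, the decay bound $\widehat{\psi}(s)\ll_A \log^{\mathcal{O}(1)}M\,(1+|s|)^{-A}$, the identification of the main term through $T(0)$, and the treatment of the frequencies $0<|\xi|\le qM^{-1+\epsilon}$ are all correct.

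The one step that does not go through as claimed is the last one. You bound the tail $|\xi|>qM^{-1+\epsilon}$ by $M^{-A'}\sum_{m\in\Z/q\Z}|f(m)|$, using $|\widehat{f}(\xi)|\le\sum_m|f(m)|$ and the decay of $T(\xi)$, and then assert this "can be absorbed into the second error term." It cannot: that term is $M^{-A}\bigl|\sum_{m\in\Z/q\Z}f(m)\bigr|=M^{-A}|\widehat{f}(0)|$, and $\sum_m|f(m)|$ is not controlled by $|\widehat{f}(0)|$, which may vanish. In fact the inequality as displayed is slightly too strong to be literally true: for $f(m)=e_q(-\xi_0 m)$ with $qM^{-1+\epsilon}<|\xi_0|\le q/2$ the right-hand side is zero, while the left-hand side equals $\bigl|\sum_m\psi_M(m)e_q(-\xi_0 m)\bigr|$, which is $\mathcal{O}_A(M^{-A})$ but not in general zero. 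The correct reading of the final error term --- and exactly what your argument proves --- is $M^{-A}\sum_{m\in\Z/q\Z}|f(m)|$ (equivalently $M^{-A}\max_{\xi}|\widehat{f}(\xi)|$), and this weaker form is all that the later applications in the proofs of Lemmas \ref{expsum1lemma} and \ref{expsum2lemma} need, since there $f$ is a unit-modulus phase, the moduli are $X^{\mathcal{O}(1)}$, and the complete-sum term is retained with a much larger factor anyway. So your proof is sound once you state the error as $M^{-A}\sum_m|f(m)|$ instead of claiming the absorption; as written, that single sentence hides a gap that cannot be closed because the literal statement is (mildly) false.
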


\begin{lemma} \emph{\textbf{(Truncated Poisson summation formula).}} (cf. \cite[Lemma 4.9(ii)]{polymath}) \label{poisson}
 Let $M \geq 1$ and let $\psi_M$ be a function on $\R$ defined by
\begin{align*}
\psi_M(x)= \psi \bigg( \frac{x-x_0}{M} \bigg),
\end{align*}
where $\psi$ is a $C^\infty$-smooth function supported on some compact interval $[c,C]$ satisfying
\begin{align*}
\psi^{(j)}(x) \, \ll \log^{\mathcal{O}_j(1)} M
\end{align*}
for all $j \geq 0.$ Let
\begin{align*}
M' := \sum_m \psi_M(m) \, \ll M \log^{\mathcal{O}(1)}M.
\end{align*}
Let $I$ be a finite index set, $c_i$ complex numbers for $i \in I$, and $a_i \,(q)$ residue classes for $i \in I$. Then
\begin{align*}
\bigg| \sum_{i \in I} c_i \sum_{m} \psi_M(m) 1_{m = a_i \,\, (q)} - \frac{M'}{q}  \sum_{i \in I} c_i\bigg| \, \ll_{A,\epsilon} (\log^{\mathcal{O}(1)}M)\frac{M}{q} \sum_{0< |h| \leq q M^{-1+\epsilon}} &\bigg| \sum_{i\in I} c_i e_q(a_ih) \bigg| \\
 & \,+ \,  M^{-A} \sum_{i \in I} |c_i|.
\end{align*}
\end{lemma}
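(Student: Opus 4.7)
The plan is to apply Poisson summation to each inner sum separately and then exchange the order of summation. Writing $m = a_i + qn$ for $n \in \Z$ and applying Poisson to the $n$-sum,
\begin{align*}
\sum_{m \equiv a_i \,(q)} \psi_M(m) = \sum_{n \in \Z} \psi_M(a_i + qn) = \frac{1}{q} \sum_{h \in \Z} \widehat{\psi_M}(h/q)\, e_q(a_i h),
\end{align*}
where $\widehat{\psi_M}(\xi) = \int_\R \psi_M(x) e(-\xi x)\, dx$; the sign convention in the character is immaterial here, since only absolute values of $h$-indexed sums appear in the final bound (via the substitution $h \mapsto -h$).

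Next I would record the standard decay estimate for the Fourier transform. Writing $\psi_M(x) = \psi((x-x_0)/M)$ and changing variables gives $\widehat{\psi_M}(\xi) = M e(-\xi x_0) \widehat{\psi}(M\xi)$. Since $\psi$ is smooth with compact support and satisfies $\psi^{(j)} \ll \log^{\mathcal{O}_j(1)} M$, repeated integration by parts yields
\begin{align*}
|\widehat{\psi}(\eta)| \ll_A (\log^{\mathcal{O}(1)} M)(1+|\eta|)^{-A}
\end{align*}
for every $A \geq 0$. Applying Poisson summation to $M' = \sum_m \psi_M(m)$ itself and using this rapid decay at nonzero integers then gives $\widehat{\psi_M}(0) = M' + \mathcal{O}(M^{-A})$ for any $A$.

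Now I would split the $h$-sum into three ranges and estimate each piece. The contribution of $h=0$ equals $(M'/q)\sum_{i \in I} c_i$ up to $\mathcal{O}(M^{-A} \sum_i |c_i|)$. For $0 < |h| \leq qM^{-1+\epsilon}$, swapping summations and using the trivial bound $|\widehat{\psi_M}(h/q)| \leq \int |\psi_M| \ll M \log^{\mathcal{O}(1)} M$ yields exactly the claimed middle term $(\log^{\mathcal{O}(1)} M)(M/q) \sum_{0<|h| \leq qM^{-1+\epsilon}} |\sum_i c_i e_q(a_i h)|$. For $|h| > qM^{-1+\epsilon}$, combining the rapid-decay bound $|\widehat{\psi_M}(h/q)| \ll_A M(M|h|/q)^{-A} \log^{\mathcal{O}(1)} M$ with the trivial estimate $|\sum_i c_i e_q(a_i h)| \leq \sum_i |c_i|$ and summing in $h$ produces a contribution of size $\mathcal{O}(M^{-A'} \sum_i |c_i|)$ for any prescribed $A'$, upon choosing $A$ sufficiently large.

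This is essentially bookkeeping for the truncated Poisson formula, parallel to the proof of the completion lemma (Lemma \ref{complete}); the only point to watch is that the polylogarithmic derivative losses in the hypothesis on $\psi$ are absorbed harmlessly into the $\log^{\mathcal{O}(1)} M$ factor in the middle range and into the $M^{-A}$ factor in the tail, so no additional input beyond standard Fourier-analytic estimates is required.
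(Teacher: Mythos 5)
Your proof is correct: the lemma is stated in the paper without proof (it is quoted from \cite[Lemma 4.9(ii)]{polymath}), and your argument — Poisson summation in each residue class, the rescaling $\widehat{\psi_M}(\xi)=Me(-\xi x_0)\widehat{\psi}(M\xi)$ with rapid decay from integration by parts, identifying the $h=0$ term with $M'/q$ up to $\mathcal{O}(M^{-A})$, the trivial bound $|\widehat{\psi_M}(h/q)|\ll M\log^{\mathcal{O}(1)}M$ in the range $0<|h|\le qM^{-1+\epsilon}$, and absorption of the tail into $M^{-A}\sum_i|c_i|$ — is exactly the standard completion-of-sums argument underlying the cited result. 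No gaps; the polylogarithmic losses and the interchange of the finite $i$-sum with the absolutely convergent $h$-sum are handled correctly.
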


For any integer $q$ and polynomial $f_1(X) = \sum_{i=0}^d a_i X^i \in \Z[X]$, define 
\begin{align*}
(f_1,q):= \gcd(q,a_0,a_1,\cdots, a_d).
\end{align*}
If $f_2 \in \Z[X]$ is such that $(f_2,q)=1,$ then we set $(f_1/f_2,q):=(f_1,q).$

\begin{lemma} \textbf{\emph{(Weil bound).}} (cf. \cite[Proposition 4.6]{polymath}) \label{weil}
Let $p$ be a prime and  $f=f_1/f_2$ for coprime polynomials $f_1,f_2 \in \Z[X]$ such that  $(f_1,p)=(f_2,p)=(f',p)=1.$ Then
\begin{align*}
\bigg| \sum_{n \in \Z/p\Z} e_p(f(n)) \bigg| \, \ll p^{1/2},
\end{align*}
where the implicit constant depends only on the degrees of $f_1$ and $f_2$.
\end{lemma}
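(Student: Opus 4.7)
The plan is to reduce the complete exponential sum to a standard Weil-type bound over $\mathbb{F}_p$, following the template of \cite[Proposition 4.6]{polymath}.

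First, I would reduce modulo $p$. The hypotheses $(f_1,p)=(f_2,p)=1$ guarantee that the reductions $\overline{f_1},\overline{f_2}\in\mathbb{F}_p[X]$ are nonzero polynomials of the same degrees as $f_1,f_2$. By our convention, $e_p(f(n))$ vanishes whenever $\overline{f_2}(n)=0$, so
\begin{align*}
\sum_{n\in\Z/p\Z}e_p\bigl(f(n)\bigr)\ =\ \sum_{\substack{x\in\mathbb{F}_p\\ \overline{f_2}(x)\neq 0}}e_p\bigl(\overline{f}(x)\bigr),
\end{align*}
where $\overline{f}=\overline{f_1}/\overline{f_2}\in\mathbb{F}_p(X)$.

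Second, I would invoke Weil's bound for exponential sums attached to rational functions: for any $g\in\mathbb{F}_p(X)$ in lowest terms that is not of Artin--Schreier form $g=h^p-h+c$ with $h\in\mathbb{F}_p(X)$ and $c\in\mathbb{F}_p$, one has
\begin{align*}
\Bigl|\sum_{\substack{x\in\mathbb{F}_p\\ g(x)\text{ defined}}}e_p\bigl(g(x)\bigr)\Bigr|\ \leq\ (\deg g_1+\deg g_2-1)\,p^{1/2}.
\end{align*}
This is a classical consequence of Weil's Riemann Hypothesis for the Artin--Schreier cover $y^p-y=g(x)$ of $\PP^1$, combined with a pole count that produces the explicit constant.

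The remaining step is to verify that $\overline{f}$ is not Artin--Schreier trivial, which is precisely where the hypothesis $(f',p)=1$ enters. This hypothesis says exactly that $\overline{f}'\neq 0$ in $\mathbb{F}_p(X)$. If $\overline{f}=h^p-h+c$ for some non-constant $h\in\mathbb{F}_p(X)$, then since $(h^p)'=0$ in characteristic $p$, the poles of $h^p$ would have orders divisible by $p$; but the poles of $\overline{f}$ have order at most $\deg f_2$, so for $p>\deg f_1+\deg f_2$ this forces $h$ to be constant, making $\overline{f}$ itself constant and hence $\overline{f}'=0$, contradicting $(f',p)=1$. The finitely many remaining small primes $p\leq\deg f_1+\deg f_2$ are absorbed into the implied constant, which depends only on the degrees.

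The main obstacle is this last Artin--Schreier exclusion: one must confirm that the combination of arithmetic coprimality conditions $(f_1,p)=(f_2,p)=(f',p)=1$ suffices to rule out every degenerate rational function $\overline{f}$ that would produce a $p$-sized contribution. Once this is in place, the final bound follows immediately from Weil's theorem, with an implied constant depending only on $\deg f_1$ and $\deg f_2$ as asserted.
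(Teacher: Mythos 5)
Your argument is correct and is essentially the standard proof behind the result that the paper does not prove but simply cites from \cite[Proposition 4.6]{polymath}: reduce mod $p$, apply Weil's bound for exponential sums attached to rational functions, and exclude the degenerate Artin--Schreier case $\overline{f}=h^p-h+c$ by the pole-order argument, absorbing the finitely many small primes $p\leq \deg f_1+\deg f_2$ into the implied constant. The only cosmetic caveats are that the degeneracy condition should be checked with $h$ allowed over $\overline{\mathbb{F}_p}$ (your pole-order argument applies verbatim there) and that the at most $\deg f_2$ points where $\overline{f_2}$ vanishes but the reduced fraction is still defined contribute $O(\deg f_2)$, which also goes into the implied constant.
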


The next lemma contains an exponential sum estimate for complete sums modulo $p^2$ by Cochrane and Zheng \cite{cz}. For the lemma, let $p$ be a prime and $f=f_1/f_2$ for coprime polynomials $f_1,f_2  \in \Z[X]$ such that $(p,f_1)=(p,f_2)=(p,f')=1$. We say that $\alpha \, \, (p)$ is a critical point modulo $p$ if 
\begin{align*}
f'(\alpha) \equiv 0 \quad (p).
\end{align*}
For an exponential sum
\begin{align*}
S= \sum_{n \in \Z/p^2\Z} e_{p^2}(f(n)),
\end{align*}
write $S= \sum_{\alpha=0}^{p-1} S_\alpha$ where
\begin{align*}
S_\alpha :=  \sum_{\substack{n \in \Z/p^2\Z \\ n \equiv \alpha \,\, (p)}} e_{p^2}(f(n)).
\end{align*}
By \cite[Theorem 3.1]{cz} we have $S_\alpha=0$ if $\alpha$ is not a critical point. Using the trivial bound $|S_\alpha| \leq p$ for the critical points $\alpha$, we obtain
\begin{lemma} \emph{\textbf{(Cochrane-Zheng bound).}}\label{czlemma} Let $p$ be a prime. Let $f=f_1/f_2$ for coprime polynomials $ f_1,f_2 \in \Z[X],$ satisfying $(p,f_1)=(p,f_2)=(p,f')=1.$ Then
\begin{align*}
\bigg| \sum_{n \in \Z/p^2\Z} e_{p^2}(f(n)) \bigg| \, \leq  (\deg(f_1) + \deg(f_2)) p.
\end{align*}

\end{lemma}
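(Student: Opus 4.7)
The plan is to follow the outline sketched in the paragraph immediately preceding the lemma: split the sum modulo $p$, use Taylor expansion to isolate the contributions of critical points via Cochrane--Zheng \cite[Theorem 3.1]{cz}, bound each surviving term trivially, and finally count critical points using the degree of $f'$.

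First, I would write each $n \in \Z/p^2\Z$ uniquely as $n = \alpha + pm$ with $\alpha, m \in \{0,1,\dots,p-1\}$, so
\begin{align*}
S = \sum_{\alpha=0}^{p-1} S_\alpha, \qquad S_\alpha = \sum_{m=0}^{p-1} e_{p^2}(f(\alpha + pm)).
\end{align*}
At any $\alpha$ with $f_2(\alpha) \not\equiv 0 \pmod p$ a Taylor expansion modulo $p^2$ is valid and gives $f(\alpha + pm) \equiv f(\alpha) + pm f'(\alpha) \pmod{p^2}$, so that
\begin{align*}
S_\alpha = e_{p^2}(f(\alpha)) \sum_{m=0}^{p-1} e_p(m f'(\alpha)),
\end{align*}
and the inner sum is $p$ if $f'(\alpha) \equiv 0 \pmod p$ and $0$ otherwise. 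At residues $\alpha$ with $f_2(\alpha) \equiv 0 \pmod p$ every term in $S_\alpha$ vanishes by our convention $e_{q}(a/b) = 0$ for non-invertible $b$. This already yields the vanishing statement that is the content of \cite[Theorem 3.1]{cz}.

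For each critical $\alpha$ the trivial bound $|S_\alpha| \leq p$ holds. Hence $|S| \leq pN_p$, where $N_p$ is the number of critical points modulo $p$. Writing $f' = (f_1' f_2 - f_1 f_2')/f_2^2$, the relevant numerator $g(X) := f_1'(X) f_2(X) - f_1(X) f_2'(X) \in \Z[X]$ has degree at most $\deg f_1 + \deg f_2 - 1$, and the hypothesis $(p, f') = 1$ ensures that $g$ is not identically zero modulo $p$. By Lagrange's theorem, $g$ has at most $\deg f_1 + \deg f_2 - 1$ zeros in $\Z/p\Z$; even adding a slack of one to accommodate the zeros of $f_2 \bmod p$ (which do not contribute either, by the previous paragraph) gives $N_p \leq \deg f_1 + \deg f_2$, and the bound $|S| \leq (\deg f_1 + \deg f_2) p$ follows.

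The only step that is not bookkeeping is the Cochrane--Zheng vanishing statement at non-critical points; this is handled by citing \cite[Theorem 3.1]{cz}, and the Taylor-expansion derivation above is given mainly to illustrate why the result is natural. The remaining work is purely combinatorial (counting roots of $g$ modulo $p$), so no serious obstacle arises once \cite{cz} is invoked.
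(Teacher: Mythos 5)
Your proposal is correct and takes essentially the same route as the paper: decompose $S=\sum_{\alpha}S_\alpha$ over residue classes modulo $p$, use the vanishing of $S_\alpha$ at non-critical points (the content of \cite[Theorem 3.1]{cz}), bound each critical $S_\alpha$ trivially by $p$, and count critical points by the degree of the numerator $f_1'f_2-f_1f_2'$ of $f'$, which is nonzero modulo $p$ by the hypothesis $(p,f')=1$. The only difference is that your Taylor expansion $f(\alpha+pm)\equiv f(\alpha)+pmf'(\alpha) \pmod{p^2}$ (valid since $(p,f_2(\alpha))=1$, with the convention $e_{p^2}(a/b)=0$ handling the poles) re-proves the cited vanishing statement directly, a harmless self-contained addition rather than a different approach.
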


\subsection{P\'olya-Vinogradov method}
Here we give the P\'olya-Vinogradov bound for short exponential sums. The statement and the proof are similar to the second bound in \cite[Corollary 4.16]{polymath}. In the first pass the reader may wish to consider the special case $b=1,$ $(d_1,d_2)=1$, and $(q,f)=1$.

\begin{lemma} \label{expsum1lemma} Let  $d_1$ and $d_2$ be cube free positive integers. Suppose that $b$ is a divisor of  $[d_1,d_2]$ with $(b,[d_1,d_2]/b)=1$. Let $c_1,c_2,$ and $\tau$ be integers, and define a rational function $f$ by
\begin{align*}
e_{d_1} \bigg ( \frac{c_1}{n} \bigg ) e_{d_2} \bigg ( \frac{c_2}{n+ \tau} \bigg) = e_{[d_1,d_2]} (f(n)).
\end{align*}
Denote
\begin{align*}
q := [d_1,d_2]/b, \quad \quad q_1 := \frac{q}{(q,f)}\quad \quad \delta_i:=\frac{d_i}{(d_1,d_2)}, \quad \quad \delta_i':=\frac{\delta_i}{(b,\delta_i)}.
\end{align*}
For $\delta_0:=(q,(d_1,d_2)),$ assume that $(q/\delta_0,\delta_0)=1.$ Let $t$ be any residue class modulo $b.$ Then
\begin{align*}
S:= \bigg| \sum_{n \equiv t \, (b)}  \psi_N(n) e_{[d_1,d_2]} (f(n)) \bigg| \, \pprec \,  \q_1^{1/2} + \frac{N}{b} \frac{(c_1,\delta_1')}{\delta_1'} \frac{(c_2,\delta_2')}{\delta_2'} .
\end{align*}
\end{lemma}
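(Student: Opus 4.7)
The plan is to parameterize the progression $n \equiv t \pmod b$, peel off the constant $b$-factor using $(b,q)=1$, complete the resulting sum modulo $q$ via Lemma \ref{complete}, and then estimate the complete exponential sums by CRT together with the Weil (Lemma \ref{weil}) and Cochrane–Zheng (Lemma \ref{czlemma}) bounds.

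First I would write $n = t + bm$ and set $\phi_M(m) := \psi_N(t+bm)$, a smooth function supported on an interval of length $M := N/b$. Since $[d_1,d_2]/d_i = \delta_{3-i}$, the explicit form of the rational function is
\[
f(n) = \frac{\delta_2 c_1}{n} + \frac{\delta_1 c_2}{n + \tau}.
\]
Using $(b,q)=1$, Lemma \ref{crt} splits
\[
e_{[d_1,d_2]}(f(t+bm)) = e_b(\bar q\, f(t)) \cdot e_q(\bar b\, f(t+bm)),
\]
where the $b$-factor is constant in $m$. Thus $|S| = \bigl| \sum_m \phi_M(m) e_q(g(m)) \bigr|$ with $g(m) := \bar b\, f(t+bm)$, an unrestricted incomplete sum modulo $q$.

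Applying Lemma \ref{complete}, the main term contributes $(M'/q)\bigl|\sum_{m \in \Z/q\Z} e_q(g(m))\bigr|$, and the dual contribution is $(M/q) \sum_{0 < |\xi| \pprec q/M} \bigl|\sum_{m \in \Z/q\Z} e_q(g(m)+\xi m)\bigr|$. I factor each complete sum by CRT through the prime decomposition of $q$; cube-freeness of $q$ makes each local modulus either $p$ or $p^2$, to which Weil and Cochrane–Zheng respectively apply. At a prime $p\mid q$ with $p \nmid (q,f)$, the linear shift $\xi m$ in the dual prevents degeneracy for $\xi \not\equiv 0 \pmod p$, so both local bounds yield square-root cancellation; multiplying across primes and summing over $\xi$ gives the $q_1^{1/2}$ term. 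For the main term, the hypothesis $(q/\delta_0, \delta_0)=1$ separates $q$ into $\delta_0$ (the part inside $(d_1,d_2)$) and a complementary part dividing $\delta_1'\delta_2'$. At a prime $p$ dividing only $\delta_i'$, the factor $\delta_{3-i}$ in the other summand of $f$ vanishes modulo $p^{\mathrm{val}_p(q)}$, leaving a Ramanujan-type sum of shape $\sum_{n \in (\Z/p^k\Z)^*} e_{p^k}(\alpha/n)$ with $\alpha$ a unit multiple of $c_i$, bounded by the local factor of $(c_i,\delta_i')$. Multiplying the local bounds and combining with $M' \ll N/b$ produces the claimed main-term contribution $(N/b)(c_1,\delta_1')(c_2,\delta_2')/(\delta_1'\delta_2')$.

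The main obstacle is the prime-by-prime bookkeeping: determining which summand of $f$ survives at each prime $p$, identifying the abstract content $(q,f)$ with the specific factors $(c_i,\delta_i')/\delta_i'$, and checking that the decomposition $q = (q,f)\cdot q_1$ used for the dual is compatible with $q = \delta_0 \cdot (q/\delta_0)$ used for the main term. The assumption $(q/\delta_0,\delta_0)=1$ is precisely what decouples these two decompositions, allowing the Ramanujan-sum analysis on the main term and the Weil/Cochrane–Zheng analysis on the dual to be carried out independently at each prime $p\mid q$.
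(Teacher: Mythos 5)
Your outline is essentially the paper's proof: complete the sum along the progression with Lemma \ref{complete}, bound the zero-frequency term by factoring $q=\delta_0\delta_1'\delta_2'$ (this is exactly where $(q/\delta_0,\delta_0)=1$ is used, via Lemma \ref{crt}), bounding the $\delta_0$-part trivially and recognizing the $\delta_1'$- and $\delta_2'$-parts as Ramanujan sums of size $(c_i,\delta_i')$, and bound the nonzero frequencies by CRT into moduli $p$ or $p^2$ with the Weil bound (Lemma \ref{weil}) and the Cochrane--Zheng bound (Lemma \ref{czlemma}), the linear shift supplying nondegeneracy and Lemma \ref{gcdsum} absorbing the frequencies with $(\xi,q_1)>1$. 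The one organizational difference is that the paper does not complete modulo $q$: it first replaces $f$ by $\tilde f:=b^{-1}f/(q,f)$, dividing the numerator by the integer $(q,f)$, and completes modulo $q_1$; then the numerator of $g(n)=\tilde f(n)+\xi b^{-1}n$ has leading coefficient $\xi b^{-1}$, so $(p,g)\le(p,\xi)$ and $(p,g')\le(p,\xi)$ at \emph{every} $p\mid q_1$, and the prime-by-prime bookkeeping at $p\mid(q,f)$ that you single out as the main obstacle simply never arises --- in your version, completed modulo $q$, you would still need to check at a prime $p$ with $p^2\mid q$ but only $p\mid (q,f)$ that the local sum vanishes unless $p\mid\xi$ (and similarly that the $p$-part of $(q,f)$ forces divisibility of $\xi$ in the other cases), since otherwise the final bound degrades from $q_1^{1/2}$ by a factor of $(q,f)$. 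One small index slip in your main-term discussion: at a prime $p\mid\delta_i'$ it is the summand carrying the coefficient $\delta_i$ (the $c_{3-i}$-term) that vanishes modulo the $p$-part of $q$, while $\delta_{3-i}$ is the unit multiplying $c_i$ in the surviving Ramanujan sum; your conclusion $(c_i,\delta_i')$ is nevertheless the correct one.
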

\begin{proof}
By a change of variables and by the Chinese Remainder Theorem (Lemma \ref{crt}), we have
\begin{align*}
S=  \bigg| \sum_{n}  \psi_N(b n + t) e_{q_1} (\tilde{f}(b n + t)) \bigg|,
\end{align*}
where $\tilde{f}(n):= b^{-1}f(n)/ (\q,f)$ (here the division of $f$ by $(\q,f)$ is computed in $\Z$ since it is possible that $(q_1,(q,f))>1$). This is an incomplete exponential sum of length $N/b$.  Thus, by Lemma \ref{complete} we get
\begin{align} \nonumber
S  & \ll_{\epsilon}   1+  \frac{N^{1+\epsilon}}{b q_1 }\sum_{0 < |\xi| \leq q_1 b N ^{-1+\epsilon}}  \bigg| \sum_{n \in \Z/q_1 \Z} e_{q_1} ( \tilde{f}(b n + t)+ \xi n) \bigg | +  \frac{N^{1+\epsilon}}{bq_1} \bigg | \sum_{n \in \Z/q_1 \Z} e_{q_1} (\tilde{f}(b n + t)) \bigg | \\  \label{polybound}
&=1+  \frac{N^{1+\epsilon}}{b q_1 }\sum_{0 < |\xi| \leq q_1 b N ^{-1+\epsilon}}   \bigg| \sum_{n \in \Z/q_1 \Z} e_{q_1} (\tilde{f}(n )+ \xi b^{-1} n) \bigg |  +  \frac{N^{1+\epsilon}}{bq} \bigg | \sum_{n \in \Z/q \Z} e_{q} (f(n)/b) \bigg | \\ \nonumber
&=: 1+S_1 +S_2
\end{align}
by a change of variables and Lemma \ref{crt}.

For the second sum we write $q = \delta_0 \delta_1' \delta_2'$, where $\delta_0=(q,(d_1,d_2)).$ Since $(q/\delta_0,\delta_0)=1$ by assumption, we get by the Chinese Remainder Theorem (Lemma \ref{crt}) and by the definition of $f$
\begin{align*}
S_2 \leq \frac{N^{1+\epsilon}}{bq} \delta_0 \bigg |  \sum_{\substack{n \in \Z/\delta_1' \Z }} e_{\delta_1'} \bigg(\frac{c_1 }{b  \delta_2'(d_1',d_2') n} \bigg) \bigg | \bigg | \sum_{n \in \Z/\delta_2' \Z} e_{\delta_2'} \bigg ( \frac{c_2}{b \delta_1' (d_1',d_2') (n +\tau)} \bigg)\bigg |
\end{align*}
After a change of variables we get by a standard bound for Ramanujan's sums
\begin{align*}
S_2& \leq \frac{N^{1+\epsilon}}{bq} \delta_0 \bigg | \sum_{\substack{n \in \Z/\delta_1' \Z \\ (n,\delta_1')=1}}e_{\delta_1'} ( c_1 n ) \bigg | \bigg | \sum_{\substack{n \in \Z/\delta_2' \Z \\ (n,\delta_2')=1}} e_{\delta_2'} (c_2 n ) \bigg | \\
& \leq   \frac{N^{1+\epsilon}}{bq} \delta_0 (c_1,\delta_1')(c_2,\delta_2') = \frac{N^{1+\epsilon}}{b} \frac{(c_1,\delta_1')}{\delta_1'} \frac{(c_2,\delta_2')}{\delta_2'}.
\end{align*}

To bound the first sum in (\ref{polybound}), for $p|q_1$ let $p'$ denote $p$ or $p^2$ so that $\prod_{p | \q_1} p' = \q_1.$ Then by Lemma \ref{crt}
\begin{align*}
 \bigg| & \sum_{n \in \Z/q_1  \Z} e_{\q_1} ( \tilde{f}(n )+ \xi b^{-1} n) \bigg | 
= \prod_{p | \q_1 } \bigg| \sum_{n \in \Z/p'\Z } e_{p'} \bigg( \frac{\tilde{f}(n)+ \xi b^{-1} n}{\q_1/p'}\bigg)  \bigg| 
\end{align*}
 Note that for $g(n):=  \tilde{f}(n )+ \xi b^{-1} n,$ for any prime $p| \q_1$ we have $(p,g) \leq (p,\xi)$ and  $(p,g') \leq (p,\xi),$ since if we write $g=g_1/g_2$ for coprime polynomials $g_1,g_2,$ then the leading coefficient of $g_1$ is $\xi b^{-1}$.  Thus, by using trivial bounds for primes $p| \xi$, and  Lemma \ref{weil} (if $p'=p$) and  Lemma \ref{czlemma} (if $p'=p^2$) for $p \nmid \xi$, we get 
\begin{align*}
 \bigg| \sum_{n \in \Z/q_1  \Z} e_{\q_1} ( \tilde{f}(n )+ \xi b^{-1} n) \bigg | & \pprec  \prod_{p | \q_1} (\xi,p) (p')^{1/2}   \, \leq    (\xi,\q_1)  \q_1^{1/2} .
\end{align*}
Using Lemma \ref{gcdsum} we obtain
\begin{align*}
 S_1 \pprec \frac{N^{1+\epsilon}}{b \q_1}\sum_{0 < |\xi| \leq  \q_1 b N^{-1+\epsilon}}  (\xi,\q_1) \q_1^{1/2} \, \pprec \, \,  \q_1^{1/2}.
\end{align*}
\end{proof}

\subsection{Heath-Brown's $q$-van der Corput method}
In this section we apply the $q$-van der Corput method to obtain a bound for exponential sums, which for short lengths performs better than the P\'olya-Vinogradov bound. Compared to Lemma \ref{expsum1lemma}, we require the extra assumption that the modulus is smooth, so that we can obtain a suitable factorization. The statement and the proof are similar to the first bound in \cite[Proposition 4.16]{polymath}.
\begin{lemma} \label{expsum2lemma} Let  $d_1$ and $d_2$ be cube free positive integers with no prime factors $\gg X^{\delta/2}$. Suppose that $b$ is a divisor of  $[d_1,d_2]$ with $(b,[d_1,d_2]/b)=1$. Let  $c_1,c_2,$ and $\tau$ be integers, and define a rational function $f$ by
\begin{align*}
e_{d_1} \bigg ( \frac{c_1}{n} \bigg ) e_{d_2} \bigg ( \frac{c_2}{n+ \tau} \bigg) = e_{[d_1,d_2]} (f(n)).
\end{align*}
Denote
\begin{align*}
q := [d_1,d_2]/b, \quad \quad q_1 := \frac{q}{(q,f)}\quad \quad \delta_i:=\frac{d_i}{(d_1,d_2)}, \quad \quad \delta_i':=\frac{\delta_i}{(b,\delta_i)}.
\end{align*}
For $\delta_0:=(q,(d_1,d_2)),$ assume that $(q/\delta_0,\delta_0)=1.$ Let $t$ be any residue class modulo $b.$ Then
\begin{align*}
 S:=  \bigg| \sum_{n \equiv t \, (b)}  \psi_N(n) e_{[d_1,d_2]} (f(n)) \bigg| \, \pprec   \frac{N^{1/2}}{b^{1/2}} \q_1^{1/6}X^{\delta/6} + \frac{N}{b} \frac{(c_1,\delta_1')}{\delta_1'} \frac{(c_2,\delta_2')}{\delta_2'} .
\end{align*}
\end{lemma}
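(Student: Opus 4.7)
I would follow \cite[Proposition 4.16]{polymath} closely, with the two new ingredients being (i) the use of cube-free (as opposed to squarefree) moduli, which forces appeals to Cochrane--Zheng alongside Weil, and (ii) the smoothness hypothesis that every prime factor of $d_1,d_2$ is $\ll X^{\delta/2}$, needed to factor the modulus well. As in the proof of Lemma \ref{expsum1lemma}, the first step is to perform the substitution $n \mapsto bn + t$ and apply the Chinese Remainder Theorem (Lemma \ref{crt}) to rewrite $S$ as the incomplete sum of length $N/b$ modulo $q_1$,
\[
\tilde{S} = \sum_n \psi_{N/b}(n)\, e_{q_1}(\tilde{f}(n)),
\]
plus a contribution modulo the ``degenerate'' part of the modulus which, treated exactly as $S_2$ in the proof of Lemma \ref{expsum1lemma}, yields the second summand $\frac{N}{b}\frac{(c_1,\delta_1')}{\delta_1'}\frac{(c_2,\delta_2')}{\delta_2'}$. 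Only the oscillatory sum $\tilde{S}$ requires a new argument.

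\textbf{q-van der Corput step.} Since every prime dividing $q_1$ is $\ll X^{\delta/2}$, a greedy factorization produces a splitting $q_1 = rs$ with $(r,s)=1$ and $r$ within a factor $X^{\delta/2}$ of any prescribed target in $[1,q_1]$. Heath-Brown's $q$-van der Corput identity with this factorization, followed by Cauchy--Schwarz in $n$, yields a bound of the form
\[
|\tilde{S}|^2 \ll \frac{N}{bs}\Bigl( \frac{N}{b} + \sum_{0<|k|<s}\Bigl|\sum_n \psi_{N/b}(n)\overline{\psi_{N/b}(n+rk)}\, e_{q_1}(\tilde{f}(n+rk) - \tilde{f}(n))\Bigr|\Bigr).
\]
The key observation is that $\tilde{f}(n+rk) \equiv \tilde{f}(n) \pmod{r}$, so the inner phase is really $e_s$ of a rational function $g_k(n)$ whose leading behaviour is controlled by $k$. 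I would then complete the inner sum in $n$ via Lemma \ref{complete}, reducing matters to bounds for complete exponential sums $\sum_{n\in\Z/s\Z}e_s(g_k(n)+\xi n)$. Splitting $s$ by CRT into its squarefree part (where Lemma \ref{weil} applies) and its prime-square part (where Lemma \ref{czlemma} applies), each factor contributes square-root cancellation up to a divisor-bounded gcd loss, yielding $\pprec (\xi k, s)\cdot s^{1/2}$ generically in $k$. Summing over $k$ and $\xi$ using Lemma \ref{gcdsum} as in the proof of Lemma \ref{expsum1lemma} collapses the off-diagonal to $\pprec (N/b) s^{1/2}$, while the diagonal $k = 0$ contributes $\pprec (N/b)^2/s$ after handling the completion error.

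\textbf{Optimization and main obstacle.} Combining these gives $|\tilde{S}|^2 \pprec (N/b)\, s^{1/2} + (N/b)^2 / s$, and balancing via the choice $s \asymp q_1^{2/3}$ (possible to within $X^{\delta/2}$ by the smoothness) produces the stated bound $N^{1/2} b^{-1/2} q_1^{1/6} X^{\delta/6}$. The main obstacle will be the bookkeeping in the off-diagonal exponential sums: one must verify uniformly in $k$ the non-degeneracy hypotheses $(p,g_{k,1}) = (p,g_{k,2}) = (p,g_k') = 1$ needed to invoke Weil and Cochrane--Zheng, and to quantify the loss in the ``bad'' cases in terms of $(c_1,\delta_1')$ and $(c_2,\delta_2')$. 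This is formally parallel to the squarefree analysis of \cite[§4]{polymath}, but the appearance of genuine prime-square factors means that critical-point analysis (via Cochrane--Zheng) must be performed in addition to the Weil-type analysis, and the explicit shape of $\tilde{f}$ as the sum of two simple fractions must be exploited to keep the degenerate contributions within the claimed $\frac{N}{b}\frac{(c_1,\delta_1')}{\delta_1'}\frac{(c_2,\delta_2')}{\delta_2'}$ term.
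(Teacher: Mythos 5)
Your overall route is the same as the paper's: reduce by the substitution $n\mapsto bn+t$ and Lemma \ref{crt} to an incomplete sum of length $N/b$ modulo $q_1$, exploit the smoothness and cube-freeness of $q_1$ to factor $q_1=rs$, $(r,s)=1$, with $r\approx q_1^{1/3}$, $s\approx q_1^{2/3}$ up to $X^{O(\delta)}$, shift by multiples of $r$ so that the $e_r$-component of the phase is $k$-independent, complete the correlation sums modulo $s$, and estimate the complete sums by CRT together with Weil at primes and Cochrane--Zheng at prime squares, with the gcd bookkeeping done via Lemma \ref{gcdsum}. However, there are two concrete gaps. First, your van der Corput display is not a valid inequality: with shift step $r$ and $|k|<s$ shifts the shifted range has length about $rs=q_1$, which exceeds $N/b$, so the prefactor must be $\frac{N/b+Kr}{K}\asymp r$ (equivalently one takes $K\asymp N/(br)$ shifts and needs $N/b\geq r$, the opposite case being covered by the trivial bound); consequently the diagonal contributes $\asymp (N/b)\,r$, not $(N/b)^2/s$. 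At the balanced factorization this happens to be of the same size as the off-diagonal $(N/b)s^{1/2}$, so the final exponent is unaffected, but the inequality as written is false and the side condition $N/b\geq r$ is missing.

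Second, and more substantively, your account of where the term $\frac{N}{b}\frac{(c_1,\delta_1')}{\delta_1'}\frac{(c_2,\delta_2')}{\delta_2'}$ comes from is wrong, and the range that actually forces it is not treated. In the van der Corput route no analogue of the $S_2$ term of Lemma \ref{expsum1lemma} appears at the CRT-reduction stage (there is no completion of the full sum modulo $q_1$), so there is no ``degenerate part of the modulus'' producing that term. The term is needed because the $q$-van der Corput argument degrades when $N/b\geq s$: completing the correlation sums then produces an incompleteness factor $\asymp N/(bs)$ multiplying the complete-sum bound, and the resulting contribution (of size roughly $\frac{N}{b}s^{-1/4}$ after taking square roots) exceeds the claimed $\frac{N^{1/2}}{b^{1/2}}q_1^{1/6}$. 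The paper avoids this by a case split: when $N/b\geq s$ it simply invokes the P\'olya--Vinogradov bound of Lemma \ref{expsum1lemma}, whose $q_1^{1/2}=(rs)^{1/2}$ term is then $\ll \frac{N^{1/2}}{b^{1/2}}q_1^{1/6}X^{\delta/6}$ and whose Ramanujan-sum term supplies exactly the second summand of the statement. Your sketch implicitly assumes $N/b\lesssim s$ throughout, so this range is unaccounted for; adding the case split (and the trivial-bound case $N/b<r$) repairs the argument. The remaining point you flag as the ``main obstacle'', namely the verification $(p,h),(p,h')\leq (p,k_1-k_2)$ uniformly in $\xi$ (including $\xi=0$), is indeed the step the paper carries out via the $g(X+\ell)-g(X)$ argument; note the correct gcd loss is $(k_1-k_2,s)$, not $(\xi k,s)$, which also keeps the zero frequency harmless in the range $N/b<s$.
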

\begin{proof}
Since $\q_1$ is cube free and has no prime factors $\gg X^{\delta/2},$ we may factorize $q_1=r s$ with $(r,s)=1,$ where
 \begin{align*}
 X^{-2\delta/3} q_1^{1/3} \ll r \ll X^{\delta/3} q_1^{1/3} \quad \text{and} \quad  X^{-\delta/3} q_1^{2/3} \ll s \ll X^{2\delta/3} q_1^{2/3}.
 \end{align*}
  We may assume that $N/b < s,$ since otherwise by  Lemma \ref{expsum1lemma}
\begin{align*}
S  \, &\pprec (rs)^{1/2} + \frac{N}{b} \frac{(c_1,\delta_1')}{\delta_1'} \frac{(c_2,\delta_2')}{\delta_2'}  \ll \frac{N^{1/2}}{b^{1/2}}\q_1^{1/6}X^{\delta/6} + \frac{N}{b} \frac{(c_1,\delta_1')}{\delta_1'} \frac{(c_2,\delta_2')}{\delta_2'}.
\end{align*}
  
  We may also assume that $N/b \geq r,$ because in the opposite case we get by a trivial bound
  \begin{align*}
  S  \pprec   \frac{N}{b}  < \frac{N^{1/2}}{b^{1/2}} r^{1/2} \leq  \frac{N^{1/2}}{b^{1/2}} q_1^{1/6} X^{\delta/6},
  \end{align*}
  which is sufficient. Hence, we may define $K := \lfloor N/ br \rfloor \geq 1.$ We then obtain   for $\tilde{f}(n):= b^{-1}f(n) /(\q,f) $ (with the division of $f$ by $(\q,f)$  computed in $\Z$ since it is possible that $(q_1,(q,f))>1$) by the Chinese Remainder Theorem (Lemma \ref{crt}) and by a change of variables
  \begin{align*}
S  = \bigg| \sum_{n } \psi_N(bn+t) e_{q_1}(\tilde{f}(bn+t)) \bigg | =  \bigg| \frac{1}{K}\sum_{n} \sum_{k=1}^K \psi_N(bn+t+kr) e_{q_1}(\tilde{f}(bn+t+kr)) \bigg | .
  \end{align*}
By Lemma \ref{crt} and periodicity
  \begin{align*}
   e_{q_1}(\tilde{f}(bn+t+kr)) &=  e_{r}( \tilde{f}(bn+t+kr)/\s ) e_{s}(\tilde{f}(bn+t+kr)/\r )  \\
   & = e_{r}( \tilde{f}(bn+t)/\s) e_{s}(\tilde{f}(bn+t+kr)/\r) .
  \end{align*}
  Hence, denoting $g(n):= r^{-1}\tilde{f}(n)$,  we obtain by Cauchy-Schwarz
  \begin{align*}
  S & \leq  \sum_{n} \bigg|  \frac{1}{K}  \sum_{k=1}^K \psi_N(bn+t+kr) e_{s}(g(bn+t+kr)) \bigg |  \\
  & \ll \frac{N^{1/2}}{b^{1/2}}\bigg ( \sum_{n} \bigg|  \frac{1}{K} \sum_{k=1}^K \psi_N(bn+t+kr) e_{s}(g(bn+t+kr)) \bigg |^2 \bigg)^{1/2} \\
  &= N^{1/2}  \bigg (   \frac{1}{K^2} \sum_{k_1, k_2=1}^K S(k_1,k_2) \bigg)^{1/2} ,
  \end{align*}
  where
  \begin{align*}
  S(k_1,k_2):=   \sum_{n}  \psi_N(bn+t+k_1 r) \psi_N(bn+t+k_2r) e_{s}(g(bn+t+k_1r)-g(bn+t+k_2r)).
  \end{align*}
By a trivial bound we have
  \begin{align*} 
  \frac{1}{K^2}\sum_{k=1}^K  |S(k,k)|\, \ll N/bK \ll \, r .
\end{align*}
For $k_1\neq k_2$ we complete the sum by using Lemma \ref{complete} and use a change of variables to obtain
 \begin{align*} 
 |  S(k_1,k_2) | \,  \ll_{\epsilon} & \,\, 1+  \frac{N^{2\epsilon}}{T}\sum_{0 \leq |\xi| \leq T}  \bigg| \sum_{n \in \Z/s \Z} e_{s} (g(n+k_1r)-g(n+k_2r) + \xi b^{-1} n )\bigg |,
 \end{align*}
 where $T:= s b N^{-1+\epsilon} \geq 1.$

For $p|s$, let $p'$ denote $p$ or $p^2$ so that $\prod_{p | \s} p' = \s$. For each $\xi$ we use the Chinese Remainder Theorem (Lemma \ref{crt}) to get 
\begin{align*}
\bigg| \sum_{n \in \Z/s \Z} e_{s}(g(n+k_1r) & -g(n+k_2r) + \xi b^{-1} n) \bigg | \\
& = \prod_{\substack{p| s}}\bigg| \sum_{n \in \Z/ p' \Z} e_{p'} \bigg(  \frac{g(n +k_1r)-g(n+k_2r) + \xi b^{-1} n}{s/p'} \bigg) \bigg|.
\end{align*} 

Denote $h(n):=  g(n +k_1r)-g(n+k_2r) + \xi b^{-1} n.$ We claim that $(p,h) \leq (p,k_1-k_2)$ and $(p,h') \leq (p,k_1-k_2)$ for any $p|s$ with $p \gg 1$.  To see this, we note that if $(p,\xi)=1,$ then $(p,h)=(p,h')=1$ since $\xi$ is the leading coefficient of the numerator for both $h$ and $h'$. If $p| \xi,$ then by a change of variables $(p,h)=(p,\tilde{h})$ and $(p,h')=(p,\tilde{h}')$, where $\tilde{h}(n)=g(n +(k_1-k_2)r)-g(n).$ Hence, we need to show that for any integer $\ell$ we have $p|(g(X+\ell)-g(X))$ only if $p|\ell$ (the argument that follows is essentially the same as in the proof of \cite[Proposition 4.12]{polymath}). To see this, suppose for the sake of contradiction that $p|(g(X+\ell)-g(X))$ but $p\nmid \ell$. Then by induction $p|(g(X+i\ell)-g(X))$ for every $i \in \N$. But since $p \nmid \ell,$ this implies by periodicity that the value of $g$ modulo $p$ is constant. Since $(p,g)=1$ and $g$ is by definition of the form $g_1/g_2$ for $g_2(n)=n(n+\tau)$ and for some $g_1 \in \Z[X]$ with $\deg g_1 \leq 1$, this is a contradiction if $p$ is sufficiently large.

Hence, by using the trivial bound for $p | (k_1-k_2),$ and Lemma \ref{weil} (if $p'=p$) and  Lemma \ref{czlemma} (if $p'=p^2$) for $p \nmid (k_1-k_2)$, we get
\begin{align*}
&\bigg| \sum_{n \in \Z/s \Z} e_{s}(g(n+k_1r)-g(n+k_2r) + \xi b^{-1} n) \bigg | \pprec \, \prod_{\substack{p| s}} (k_1-k_2,p) (p')^{1/2} \, \leq (k_1-k_2, s)s^{1/2}.
\end{align*}
Since $T \geq 1,$ we get by Lemma \ref{gcdsum}
\begin{align*}
S \, \pprec \frac{N^{1/2}}{b^{1/2}} \bigg( r + \frac{1}{K^2} \sum_{\substack{k_1,k_2 \leq K \\ k_1\neq k_2}} (k_1-k_2, s) s^{1/2} \bigg)^{1/2} \, \pprec \frac{N^{1/2}}{b^{1/2}} \q_1^{1/6} X^{\delta/6}.
\end{align*}
\end{proof}

\section{Type I/II estimate} \label{typeisection}
In this section we prove Proposition \ref{typei} by using Poisson summation formula and Cauchy-Schwarz. First we apply  finer-than-dyadic decomposition to replace $\psi(\ell mn/X)$ by $\psi(\ell \tilde{M} \tilde{N}/X )$ for some $\tilde{M} \sim M, \tilde{N} \sim N$: Let $\Delta:=X^{-\eta/4}$ for some small $\eta>0$, and let $\tilde{M}$ and $\tilde{N}$ run over numbers of the form $(1+\Delta)^j$ for $j \in \N$. Then
\begin{align*}
&\Sigma_d :=  \bigg| \sum_{\substack{\ell mn \equiv a \, (d^2) \\ m \sim M, \, \, n \sim N}} \alpha(m) \beta(n) \psi(\ell mn/X) - \frac{1}{\phi(d^2)} \sum_{\substack{( \ell mn,d^2)=1 \\ m \sim M, \, \, n \sim N}} \alpha(m) \beta(n) \psi(\ell mn/X)  \bigg|  \\
&= \bigg|\sum_{\substack{\tilde{M} \sim M \\ \tilde{N} \sim N }} \bigg(\sum_{\substack{\ell mn \equiv a \, (d^2) \\ m \in (\tilde{M}, \tilde{M}(1+\Delta)] \\ n \in (\tilde{N}, \tilde{N}(1+\Delta)]}} \alpha(m) \beta(n) \psi(\ell mn/X) - \frac{1}{\phi(d^2)} \sum_{\substack{( \ell mn,d^2)=1 \\m \in (\tilde{M}, \tilde{M}(1+\Delta)] \\ n \in (\tilde{N}, \tilde{N}(1+\Delta)] }} \alpha(m) \beta(n) \psi(\ell mn/X) \bigg)  \bigg|.
\end{align*}
By the mean value theorem for all $m \in (\tilde{M}, \tilde{M}(1+\Delta)]$ and $ n \in (\tilde{N}, \tilde{N}(1+\Delta)]$ we have
\begin{align*}
\psi(\ell m n /X) = \psi (\ell \tilde{M} \tilde{N}/X) + \mathcal{O}(X^{-\eta/4}).
\end{align*}
 Hence, by the triangle inequality
\begin{align*}
\Sigma_d  \ll &\bigg|\sum_{\substack{\tilde{M} \sim M \\ \tilde{N} \sim N }} \bigg(\sum_{\substack{\ell mn \equiv a \, (d^2) \\ m \in (\tilde{M}, \tilde{M}(1+\Delta)] \\ n \in (\tilde{N}, \tilde{N}(1+\Delta)]}} \alpha(m) \beta(n) \psi(\ell \tilde{M} \tilde{N}/X) - \frac{1}{\phi(d^2)} \sum_{\substack{( \ell mn,d^2)=1 \\m \in (\tilde{M}, \tilde{M}(1+\Delta)] \\ n \in (\tilde{N}, \tilde{N}(1+\Delta)] }} \alpha(m) \beta(n) \psi(\ell \tilde{M} \tilde{N}/X) \bigg)  \bigg| \\
& \hspace{50pt}+ X^{-\eta/4}\sum_{\substack{\ell m n \asymp X \\ \ell m n \equiv a \, (d^2)}} |\alpha(m)\beta(n)|\,\, + \, \, X^{-\eta/4} \frac{1}{\phi(d^2)}\sum_{\substack{\ell m n \asymp X}} |\alpha(m)\beta(n)| \\
 \pprec &  \, X^{\eta/2}\max_{\substack{\tilde{M} \sim M \\ \tilde{N} \sim N }} \Sigma_d(\tilde{M}, \tilde{N})  \, \,+ \,\, X^{1-\eta/4}/D,
\end{align*}
where (after absorbing the restrictions $m \in (\tilde{M}, \tilde{M}(1+\Delta)]$ and $n \in (\tilde{N}, \tilde{N}(1+\Delta)]$ respectively into the coefficients $\alpha(m)$ and $\beta(n)$)
\begin{align*}
\Sigma_d(\tilde{M}, \tilde{N}) :=  \bigg| \sum_{\substack{\ell mn \equiv a \, (d^2) \\ m \sim M, \, \, n \sim N}} \alpha(m) \beta(n) \psi(\ell \tilde{M} \tilde{N} /X) - \frac{1}{\phi(d^2)} \sum_{\substack{( \ell mn,d^2)=1 \\ m \sim M, \, \, n \sim N}} \alpha(m) \beta(n) \psi(\ell \tilde{M} \tilde{N}/X)  \bigg|. 
\end{align*}
Hence, it suffices to show that for any $\tilde{M} \sim M $ and $\tilde{N} \sim N$ we have
\begin{align*}
\Sigma_d(\tilde{M}, \tilde{N}) \ll X^{1-\eta}/D.
\end{align*}

  Write
\begin{align*}
 \Sigma_d (\tilde{M},\tilde{N}) :=  \bigg| \sum_{\substack{\ell mn \equiv a \, (d^2) \\ m \sim M, \, \, n \sim N}} \alpha(m) \beta(n) \psi(\ell \tilde{M} \tilde{N}/X) - \frac{1}{\phi(d^2)} \sum_{\substack{( \ell mn,d^2)=1 \\ m \sim M, \, \, n \sim N}} \alpha(m) \beta(n) \psi(\ell \tilde{M} \tilde{N}/X)  \bigg|  \, \\
\leq \frac{1}{\phi(d^2)}\sum_{(b,d^2)=1} \bigg| \sum_{\substack{\ell mn \equiv a \, (d^2) \\ m \sim M, \, \, n \sim N}} \alpha(m) \beta(n) \psi(\ell \tilde{M} \tilde{N}/X) -  \sum_{\substack{\ell mn \equiv b \, (d^2) \\ m \sim M, \, \, n \sim N}} \alpha(m) \beta(n) \psi( \ell \tilde{M} \tilde{N}/X)  \bigg|
\end{align*}
Applying the Poisson summation formula (Lemma \ref{poisson}) we get
\begin{align*}
\Sigma_d(\tilde{M},\tilde{N}) \, \ll_{\epsilon} \, 1 + X^{2\epsilon} \max_{(b,d^2)=1} \widehat{\Sigma}(b) ,
\end{align*}
where for $H:= X^\epsilon D/L $ we have
\begin{align*}
\widehat{\Sigma} (b) := \frac{1}{H} \sum_{0 < |h| \leq H} \bigg | \sum_{\substack{ (mn,d^2)=1 \\ m \sim M, \, \, n \sim N }} \alpha(m) \beta(n) e_{d^2}\bigg(\frac{bh}{mn}\bigg) \bigg |.
\end{align*}
To remove the coefficients $\alpha(m)1_{(m,d^2)=1}$ we use Cauchy-Schwarz to get (for some phases $c_h \in \mathbb{C}$ and for $\psi_M(m)$ a $C^\infty$-smooth majorant of $1_{m \sim M}$)
\begin{align} \label{cs1}
\widehat{\Sigma} (b) &=   \sum_{\substack{ (m,d^2)=1 \\ m \sim M }} \alpha(m) \frac{1}{H} \sum_{0 < |h| \leq H} c_h \sum_{\substack{ (n,d^2)=1 \\   n \sim N }}\beta(n) e_{d^2}\bigg(\frac{bh}{mn}\bigg)  \\ \nonumber 
&\pprec M^{1/2} \bigg ( \sum_{m} \psi_M(m) \frac{1}{H^2} \bigg |  \sum_{0 < |h| \leq H} c_h \sum_{\substack{ (n,d^2)=1 \\   n \sim N }}\beta(n) e_{d^2}\bigg(\frac{bh}{mn}\bigg)  \bigg |^2 \bigg )^{1/2} \\ \nonumber
& =  M^{1/2}  \bigg ( \frac{1}{H^2}   \sum_{0 < |h_1 |, |h_2| \leq H} \sum_{\substack{ (n_1n_2,d^2)=1 \\   n_1, n_2 \sim N }} c_{h_1}\overline{c_{h_2}} \beta(n_1)\overline{\beta(n_2)} \sum_{m} \psi_M(m)  e_{d^2}(\gamma / m)  \bigg )^{1/2} .
\end{align}
where $\gamma := b(h_1/n_1 - h_2/n_2)$. By applying Lemma \ref{expsum1lemma} with $d_1=d^2$ and $d_2=1,$ we obtain 
\begin{align*}
 \bigg | \sum_{m} \psi_M(m) e_{d^2}(\gamma /m) \bigg |    \pprec d + \frac{M(h_1n_2-h_2n_1,d^2)}{d^2} .
\end{align*}
Hence, summing over $c:= (h_1n_2-h_2n_1,d^2)$ we have
\begin{align*}
\widehat{\Sigma} (b) \, & \pprec M^{1/2} \bigg ( \sum_{c | d^2} \frac{1}{H^2} \sum_{\substack{h_1,h_2,n_1,n_2 \\ h_1n_2 - h_2n_1 \equiv 0 \, (c)}} \bigg( D^{1/2}+ \frac{M c}{D} \bigg)\bigg)^{1/2} \\
& \pprec M^{1/2} \bigg(  \sum_{c | d^2}\bigg(  D^{1/2} + \frac{M c}{D}  \bigg) \frac{1}{H^2}\sum_{\ell_1 \ll HN} \sum_{\substack{\ell_2 \, \ll HN \\ \ell_2 \equiv \ell_1  \, (c)} } 1  \bigg)^{1/2} \\
& \ll M^{1/2} \bigg(  \sum_{c | d^2}\bigg( D^{1/2} + \frac{M c}{D}  \bigg)  \bigg(\frac{N^2}{c} + \frac{N}{H} \bigg) \bigg)^{1/2} \\
& \pprec  M^{1/2} \bigg(N^2 D^{1/2}  +  \frac{D^{1/2}N}{H} +\frac{MN^2}{D} +  \frac{MN}{H} \bigg)^{1/2} \, \\
&\ll M^{1/2} N D^{1/4} + \frac{(LMN)^{1/2}}{D^{1/4}}+ \frac{M N}{D^{1/2}} + \frac{ M^{1/2}(LMN)^{1/2}}{D^{1/2}} \\
&\ll M^{1/2} N D^{1/4} + \frac{X^{1/2}}{D^{1/4}}+ \frac{M N}{D^{1/2}} +  \frac{ M^{1/2}X^{1/2}}{D^{1/2}} \, \ll \frac{X^{1-\eta}}{D},
\end{align*}
where we have used the fact that $D=X^{1/2+2\varpi},$ $LMN=X$, and $H=X^\epsilon D/L.$ On the last line the first term is sufficiently small since $N \leq X^{1/8 +\sigma/2 -5\varpi /2 - \eta},$ and the fourth term is sufficiently small since $M \leq X^{1/2-\sigma}$ with $\sigma/2 > 3 \varpi + \eta.$
\qed

\section{Type I estimate} \label{typeipuresection}
\subsection{Optimizing Cauchy-Schwarz}
Here we offer a heuristic explanation of the arguments that follow (cf. \cite[Section 15.3.1]{michel} for a similar heuristic). Suppose we want to bound a trilinear sum of the form
\begin{align*}
\sum_{m \sim M} \sum_{n \sim N} \sum_{d \sim D} \alpha_m \beta_n \gamma_d \Phi (m,n,d)
\end{align*}
by using Cauchy-Schwarz to replace $\alpha_m$ by a smooth function $\psi_M(m)$. There are two options how to do this, either
\begin{align*}
\bigg( \sum_{m \sim M} |\alpha_m|^2 \bigg)^{1/2} \bigg( \sum_{d \sim D} |\gamma_d|^2 \bigg)^{1/2}  \bigg( \sum_{m } \psi_M(m) \sum_{d\sim D} \bigg| \sum_{n \sim N} \beta_n \Phi (m,n,d) \bigg|^2 \bigg)^{1/2}
\end{align*}
or
\begin{align*}
\bigg( \sum_{m \sim M} |\alpha_m|^2 \bigg)^{1/2}  \bigg( \sum_{m } \psi_M(m)\bigg| \sum_{n \sim N}  \sum_{d\sim D}  \beta_n \gamma_d \Phi (m,n,d) \bigg|^2 \bigg)^{1/2}.
\end{align*}
We then have to control either 
\begin{align*}
 \sum_{n_1, n_2 \sim N} \beta_{n_1} \overline{\beta_{n_2}}  \sum_{d\sim D}  \sum_{m } \psi_M(m) \Phi (m,n_1,d)\overline{\Phi (m,n_2,d)}.
\end{align*}
or
\begin{align*}
 \sum_{n_1, n_2 \sim N} \beta_{n_1} \overline{\beta_{n_2}}  \sum_{d_1, d_2 \sim D}   \gamma_{d_1} \overline{ \gamma_{d_2}} \sum_{m } \psi_M(m)    \Phi (m,n_1,d_1)\overline{\Phi (m,n_2,d_2)}.
\end{align*}
In the off-diagonal case ($n_1 \neq n_2$ in the first sum,  $(n_1,d_1) \neq (n_2,d_2)$ in the second sum) we expect to be able to show cancellation in the sum over $m$. In the diagonal case we do not get any cancellation, but we hope that the diagonal is a small subset of the set of a variables. We are then faced with a trade-off:
\vspace{5pt} 

\emph{In the first case the ratio $1/N$  of the diagonal to the variable set is larger but the coefficient $\Phi (m,n_1,d)\overline{\Phi (m,n_2,d)}$ is simpler.}
\vspace{5pt} 

\emph{In the second case the ratio  $1/(ND)$  of the diagonal to the variable set is smaller but the coefficient $\Phi (m,n_1,d_1)\overline{\Phi (m,n_2,d_2)}$ is more complicated.}

We have already seen this in the proof of the Type I/II estimate, where in (\ref{cs1}) it was important to keep the sum over $h$ inside to make the diagonal contribution sufficiently small (however, there this did not cause any complications to the sum over $m$ in the off-diagonal case; in the Type I and Type II estimates we will not be so lucky). 

For the proofs of the Type I and Type II estimate we will make use of the fact that $d$ is well-factorable, so that we can split the sum over $d^2$ as
\begin{align*}
\sum_{d^2 \in \DD} = \sum_{r^2 \in \RR} \sum_{q^2 \in \QQ},
\end{align*}
and find a middle ground of the two alternatives by keeping the sum over $r$ outside and sum over $q$ inside; this idea goes back to the work of Fouvry and Iwaniec on equidistribution estimates with well-factorable weights \cite{fi}. The idea of using smooth moduli is due to Zhang \cite{zhang}. In the proof of the Type II estimate we find that later in the argument we need to split some sums a second time before another application of the Cauchy-Schwarz inequality. The factorization is always determined in such a way that the diagonal contribution is just small enough, so that the resulting sum over the smoothed variable is as simple as possible. Note that here the variables $d^2,r^2,q^2$ run over a sparse set. This has to be taken into account when deciding the factorization, which is the main reason why our Type II range is more difficult to handle than in \cite[Theorem 5.1]{polymath}.

\subsection{Proof of the Type I estimate}

In this section we prove Proposition \ref{typeipure}. The proof will already feature many of the ingredients that go into the proof of the Type II estimate in the next section, although here it is not necessary to fully optimize the argument. 

Similarly as in the proof of the Type I/II estimate, we can use finer-than-dyadic decomposition to replace $\psi(mn/X)$ by $\psi(\tilde{M} n/x)$ for some $\tilde{M} \sim M.$ Absorbing the condition $1_{m \sim M}$ into the coefficient $\alpha(m)$, by the Chinese Remainder Theorem and by triangle inequality we have
\begin{align*}
 \Sigma  :=\sum_{d^2 \in \DD} & \bigg | \sum_{\substack{mn \equiv a \, (d^2) }} \alpha(m)  \psi(\tilde{M}n/X) - \frac{1}{\phi(d^2)} \sum_{\substack{(mn,d^2)=1 }} \alpha(m)  \psi(\tilde{M}n/X) \bigg |  \\
& \leq \frac{1}{\phi(P_I^2)} \sum_{(b, P_I^2) =1} \sum_{d^2 \in \DD}  \bigg | \sum_{\substack{mn \equiv a \, (d^2) }} \alpha(m)  \psi(\tilde{M}n/X) -  \sum_{mn \equiv b \, (d^2) } \alpha(m)  \psi(\tilde{M}n/X) \bigg | 
\end{align*}
where $P_I= \prod_{p \in I} p$ for $I = \bigcup_{j=1}^K I_j.$ By the Poisson summation formula (Lemma \ref{poisson}) we obtain
\begin{align*}
 \Sigma    \ll_\epsilon  1 + \max_{(b, P_I^2) =1}  X^{2\epsilon} \hat{\Sigma}(b),
\end{align*}
where for $H := X^\epsilon D/N$
\begin{align*}
\hat{\Sigma}(b):=  \frac{1}{H} \sum_{1 \leq |h| \leq H} \sum_{d^2 \in \DD} \bigg| \sum_m \alpha(m) e_{d^2} (b h/m) \bigg|
\end{align*}
We now plan to use Cauchy-Schwarz to replace $\alpha (m)$ by a smooth function. In order to do this we need to split the sum over $d^2$ as follows: recall that by our set-up in the beginning of Section \ref{harmansection} we have $d^2=p_1^2\cdots p_K^2$ for primes $p_j \asymp P^{1/2} = D^{1/2K}$. Choose $K_0 \leq K$ so that for $R:=P^{K_0},$ $Q:=P^{K-K_0},$ $RQ=D$ we have
\begin{align*}
Q \in [X^{16 \varpi + 8 \delta}, X^{16\varpi + 9 \delta}].
\end{align*}
Define
\begin{align*}
\RR := \{p_1^2 \cdots p_{K_0}^2: \, p_j \in I_j \} \quad \text{and} \quad \QQ := \{p_{K_0+1}^2 \cdots p_{K}^2: \, p_j \in I_j \} 
\end{align*}
so that for $r^2 \in \RR$ and $q^2 \in \QQ$ we have $r^2 \asymp R$ and $q^2 \asymp Q$. Note that then $(r,q)=1.$ This factorization is determined so that we can control the diagonal contribution. For some $r^2 \in \RR$ we have by Cauchy-Schwarz
\begin{align*}
\hat{\Sigma} (b)& \ll R^{1/2}  \bigg| \sum_m \alpha(m) \frac{1}{H}\sum_{1 \leq |h| \leq H}  \sum_{q^2 \in \QQ} c_{r,q,h}  e_{r^2q^2} (bh/m) \bigg|  \\
& \pprec R^{1/2} M^{1/2} \bigg(  \sum_{m} \psi_M(m) \bigg|\frac{1}{H}\sum_{1 \leq |h| \leq H}\sum_{q^2 \in \QQ}  c_{r,q,h}  e_{r^2q^2} (bh/m)\bigg|^2 \bigg)^{1/2} \\
& \leq  R^{1/2} M^{1/2} \bigg( \frac{1}{H^2}\sum_{1 \leq |h_1|, |h_2| \leq H} \sum_{q_1^2, q_2^2 \in \QQ} \bigg|\sum_{m} \psi_M(m) e_{r^2 q_1^2} (bh_1/m) e_{r^2 q_2^2} (-bh_2/m) \bigg| \bigg)^{1/2},
\end{align*}
where $\psi_M$ is a $C^\infty$-smooth majorant for $1_{m \sim M}$. By the Chinese Remainder Theorem (Lemma \ref{crt}) we have for some integer $c=c(h_1,h_2,q_1,q_2,r)$
\begin{align*}
e_{r^2 q_1^2} (bh_1/m) e_{r^2 q_2^2} (-bh_2/m)  = e_{r^2 [q_1^2,q_2^2]} (bc/m)
\end{align*}
Applying Lemma \ref{expsum1lemma}  with $d_1=r^2[q_1^2,q_2^2] $ and $d_2=1$ we obtain (since $(b,r^2[q_1^2,q_2^2])=1$)
\begin{align*}
\bigg | \sum_{m} \psi_M(m) e_{r^2 [q_1^2,q_2^2]} (bc/m) \bigg|  \pprec r  q_1 q_2 + \frac{M}{r^2[q_1^2,q_2^2]} (c, r^2[q_1^2,q_2^2]).
\end{align*}
Expanding the definition of $c$ by using the Chinese Remainder Theorem we find (since $(r^2,q_1^2q_2^2)=1$)
\begin{align*}
\frac{M}{r^2[q_1^2,q_2^2]} (c, r^2[q_1^2,q_2^2]) \leq \frac{M}{r^2} (c, r^2) = \frac{M}{r^2} (h_1 q_2^2-h_2 q_1^2 , r^2).
\end{align*}
 Hence, we have
\begin{align*}
 \frac{1}{H^2}\sum_{1 \leq |h_1|, |h_2| \leq H} \sum_{q_1^2, q_2^2 \in \QQ} \bigg|\sum_{m} \psi_M(m) & e_{r^2 q_1^2} (bh_1/m) e_{r^2 q_2^2} (-bh_2/m)  \bigg| \\
 & \pprec R^{1/2} Q^2 +   \frac{M}{R}  \frac{1}{H^2}\sum_{1 \leq |h_1|, |h_2| \leq H} \sum_{q_1^2, q_2^2 \in \QQ} (h_1 q_2^2- h_2 q_1^2, r^2).
\end{align*}
Writing $\Delta = h_1 q_2^2- h_2 q_1^2,$ we have by Lemma \ref{gcdsum}
\begin{align*}
  \frac{1}{H^2}\sum_{1 \leq |h_1|, |h_2| \leq H} \sum_{q_1^2, q_2^2 \in \QQ} (h_1 q_2^2- h_2 q_1^2, r^2) & \pprec \frac{1}{H^2} \sum_{0 \leq |\Delta| \ll HQ} (\Delta, r^2) \sum_{1 \leq |h_1| \leq H} \sum_{q_2^2 \in \QQ} 1 \\
  & \pprec   R Q^{1/2}/H +  Q^{3/2} \leq R Q^{1/2}.
\end{align*}
Thus,
\begin{align*}
\hat{\Sigma}(b) &\pprec  R^{1/2} M^{1/2} ( R^{1/2} Q^2  + M Q^{1/2})^{1/2} \leq R^{3/4} Q M^{1/2} + R^{1/2} Q^{1/4} M \\
& = \frac{X}{\sqrt{RQ}} \bigg( \frac{R^{5/4} Q^{3/2}}{N M^{1/2}} + \frac{R Q^{3/4}}{ N}\bigg) \leq  \frac{X^{1-\eta}}{\sqrt{D}},
\end{align*}
since
\begin{align*}
\frac{R Q^{3/4}}{ N} = \frac{D}{N Q^{1/4}} \leq \frac{X^{1/2+2 \varpi}}{X^{1/2-2 \varpi - \delta} X^{4 \varpi + 2 \delta}} = X^{-\delta},
\end{align*}
and
\begin{align*}
\frac{R^5 Q^6}{N^4 M^2} = \frac{D^5 Q}{X^2 N^2} \leq \frac{X^{5/2 + 10 \varpi + 16 \varpi + 9 \delta}}{X^{3 - 4 \varpi - 2 \delta}} \leq X^{-\delta}
\end{align*}
by using
\begin{align*}
30 \varpi + 11 \delta \leq 1/2 - \delta.
\end{align*}
\qed

\section{Type II estimate} \label{typeiisection}
\subsection{Large sieve for sparse sets of moduli}
For preliminary reductions in the Type II estimate we require the large sieve inequality for sparse sets of moduli of Baier and Zhao \cite[Lemma 9]{bzsparse}. To state the lemma we need to define the notion of a well-distributed set (as in \cite[Sections 2]{bzsparse}): for any set $S$ of natural numbers define
\begin{align*}
S_t = \{q \in \N: qt\in S\}, \quad S_t(Q):= S_t \cap ( Q,2Q], \quad \text{and} \quad S(Q)= S_1(Q).
\end{align*}
We say that $S$ is well-distributed if for all $t \in \N$, $Q \leq x < x+y \leq 2Q,$ and $(k,\ell)=1$ we have
\begin{align*}
|\{q \in S_t: x \leq q \leq x+y, q \equiv \ell \, (k)\}| \pprec \frac{|S_t(Q)|y}{k Q} +1.
\end{align*}
\begin{lemma} \label{largesieve} Let $\alpha(m)$ and $\beta(n)$ be divisor bounded functions, supported respectively for $m\sim M$ and $n \sim N$. Let $S$ be a well-distributed set and let $Q \geq 1$. Then
\begin{align*}
\sum_{q \in S(Q)}& \frac{q}{\phi(q)} \sideset{}{^\ast} \sum_{\chi \,\,(q)} \bigg| \sum_{m,n} \alpha(m)\beta(n) \chi(mn)\bigg| \\
& \pprec M^{1/2} N^{1/2} ( M + Q M^{1/2} + Q |S(Q)| )^{1/2}( N + Q N^{1/2} + Q |S(Q)| )^{1/2}.
\end{align*}
\end{lemma}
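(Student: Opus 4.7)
The strategy is to factorise the bilinear inner sum as a product of two one-variable Dirichlet polynomials and then apply the sparse-moduli large sieve of Baier--Zhao to each factor. Concretely, since $\chi(mn)=\chi(m)\chi(n)$ for any Dirichlet character $\chi$, we can write
\begin{align*}
A(\chi):=\sum_m \alpha(m)\chi(m), \qquad B(\chi):=\sum_n \beta(n)\chi(n),
\end{align*}
so that the inner sum equals $A(\chi)B(\chi)$. Applying Cauchy--Schwarz to the outer sum over pairs $(q,\chi)$ with weight $q/\phi(q)$ then yields
\begin{align*}
\text{LHS}\;\leq\;\bigg(\sum_{q\in S(Q)}\frac{q}{\phi(q)}\sideset{}{^\ast}\sum_{\chi\,(q)}|A(\chi)|^2\bigg)^{1/2}\bigg(\sum_{q\in S(Q)}\frac{q}{\phi(q)}\sideset{}{^\ast}\sum_{\chi\,(q)}|B(\chi)|^2\bigg)^{1/2}.
\end{align*}

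Next, to each quadratic factor I would apply the Baier--Zhao large sieve inequality for well-distributed sparse sets of moduli \cite[Theorem~2]{bzsparse}, which for any sequence $(a_m)$ supported on $m\sim M$ gives
\begin{align*}
\sum_{q\in S(Q)}\frac{q}{\phi(q)}\sideset{}{^\ast}\sum_{\chi\,(q)}\bigg|\sum_m a_m\chi(m)\bigg|^2\;\pprec\;\bigl(M+QM^{1/2}+Q|S(Q)|\bigr)\sum_m|a_m|^2,
\end{align*}
and similarly for $(b_n)$ on $n\sim N$. Since $\alpha$ and $\beta$ are divisor-bounded we have $\sum_m|\alpha(m)|^2\pprec M$ and $\sum_n|\beta(n)|^2\pprec N$. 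Substituting these two estimates into the two factors above delivers exactly the bound claimed in the lemma.

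The only genuinely substantive step is the sparse large sieve itself, which is imported from \cite{bzsparse}; the role of the well-distribution hypothesis is to control the spacing of Farey fractions $a/q$ with $q\in S(Q)$ by a duality/partition argument, which is what replaces the usual $Q^2$ factor of the classical large sieve by the smaller $QM^{1/2}+Q|S(Q)|$ contribution appropriate to a sparse set. Given that external input, the present deduction reduces to a single application of Cauchy--Schwarz combined with the $L^2$ bounds for divisor-bounded coefficients, so no further obstacle arises.
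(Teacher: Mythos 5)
The paper does not prove this lemma at all: it is quoted verbatim as Lemma 9 of Baier--Zhao \cite{bzsparse}, so there is no internal argument to compare against. Your derivation is the standard one for bilinear large-sieve estimates of this shape and it is correct: the factorization $\chi(mn)=\chi(m)\chi(n)$, Cauchy--Schwarz over the pairs $(q,\chi)$ with the positive weight $q/\phi(q)$, the one-variable sparse large sieve applied to each factor, and $\sum_{m\sim M}|\alpha(m)|^2\pprec M$ for divisor-bounded coefficients together give exactly the stated bound. The one point you gloss over is the precise form of the imported input: Baier--Zhao's large sieve for well-distributed sparse sets is stated for additive characters (Farey fractions $a/q$ with $q\in S(Q)$), so to get the inequality you quote, namely
\begin{align*}
\sum_{q\in S(Q)}\frac{q}{\phi(q)}\sideset{}{^\ast}\sum_{\chi\,(q)}\bigg|\sum_m a_m\chi(m)\bigg|^2\;\pprec\;\bigl(M+QM^{1/2}+Q|S(Q)|\bigr)\sum_m|a_m|^2,
\end{align*}
you need the standard conversion via Gauss sums, writing $\sum_m a_m\chi(m)=\tau(\bar\chi)^{-1}\sum_{a\,(q)}\bar\chi(a)\sum_m a_m e_q(am)$ for primitive $\chi$ (this is where primitivity and the weight $q/\phi(q)$ are used) and then applying orthogonality before invoking the additive sparse large sieve; also your reference ``Theorem 2'' of \cite{bzsparse} is likely not the correct numbering for that input. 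With that step made explicit, your argument is a complete and correct proof, and is presumably the same route by which Baier and Zhao establish their Lemma 9.
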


\subsection{Reduction to exponential sums}
In this section we apply Linnik's dispersion method to prove Proposition \ref{typeii}. For a heuristic explanation of the argument that follows we refer to \cite[Section 5.2]{polymath}. Our argument and notations follow closely the proof of \cite[Theorem 5.1(ii)]{polymath}. We may assume that $N \leq X^{1/2-2\varpi-\delta},$ and write $N=X^{1/2-\gamma}$ for $\gamma \in [2 \varpi +\delta,\sigma].$ We then choose $K_0 \leq K$ such that for $R:=P^{K_0},$ $Q:=P^{K-K_0},$ $RQ=D$ we have
\begin{align}\label{RQ1}
R\sqrt{Q} \in [NX^{-2 \delta}, NX^{-\delta/2}].
\end{align}
This holds if
\begin{align} \label{RQ2}
R \in [X^{1/2-2 \varpi - 2 \gamma -3 \delta}, X^{1/2-2\varpi -2 \gamma - 2 \delta}], \quad \quad Q \in [X^{4 \varpi + 2 \gamma + 2 \delta},X^{4 \varpi + 2 \gamma + 3 \delta}].
\end{align}
Define
\begin{align*}
\RR := \{p_1^2 \cdots p_{K_0}^2: \, p_j \in I_j \} \quad \text{and} \quad \QQ := \{p_{K_0+1}^2 \cdots p_{K}^2: \, p_j \in I_j \} 
\end{align*}
so that for $r^2 \in \RR$ and $q^2 \in \QQ$ we have $r^2 \asymp R$ and $q^2 \asymp Q$. Note that then $(r,q)=1.$

\begin{remark} Note that in \cite{polymath} the factorization is chosen so that $R$ is a bit less than $N$. Since the moduli run over a sparse set, we will need a slightly larger $Q$ to control the diagonal contribution. 
\end{remark}

We apply Perron's formula to remove the weight $\psi(mn/X)$ (cf. \cite[Chapter 3]{harman}, for instance).  We also absorb the conditions $1_{m \sim M}, 1_{n \sim N}$ to the coefficients $\alpha(m), \beta(n)$, so that we need to show
\begin{align}
\sum_{\substack{r^2 \in \RR \\ q^2 \in \QQ}} |\Delta (\alpha \ast \beta; a \, (r^2q^2))| \, \ll \frac{X^{1-\eta}}{\sqrt{RQ}},
\end{align}
where the discrepancy is defined by
\begin{align*}
\Delta (\alpha \ast \beta; a \, (r^2q^2)) :=  \sum_{\substack{n \equiv a \, (r^2q^2)}} (\alpha\ast \beta)(n)  - \frac{1}{\phi(r^2)\phi(q^2)} \sum_{\substack{(n,r^2q^2)=1 }}  (\alpha\ast \beta)(n).
\end{align*}

To simplify the application of the dispersion method we split the discrepancy as
\begin{align*}
\Delta (\alpha \ast \beta; a \, (r^2q^2)) = \Delta_1 (\alpha \ast \beta; a; r^2,q^2) + \Delta_2 (\alpha \ast \beta; a; r^2,q^2),
\end{align*}
where
\begin{align*}
\Delta_1 (\alpha \ast \beta; a; r^2,q^2) &:=  \sum_{\substack{n \equiv a \, (r^2q^2)}} (\alpha\ast \beta)(n)  - \frac{1}{\phi(q^2)} \sum_{\substack{(n,q^2)=1 \\ n \equiv a \, (r^2)}}  (\alpha\ast \beta)(n), \\
\Delta_2 (\alpha \ast \beta; a; r^2,q^2) & :=    \frac{1}{\phi(q^2)} \sum_{\substack{(n,q^2)=1 \\ n \equiv a \, (r^2)}}  (\alpha\ast \beta)(n) - \frac{1}{\phi(r^2)\phi(q^2)} \sum_{\substack{(n,r^2q^2)=1 }}  (\alpha\ast \beta)(n).
\end{align*}

We get a sufficient bound for the sum over $\Delta_2$ after expanding by Dirichlet characters and using Lemma \ref{largesieve}. For this we first need to check that $\RR$ is well-distributed but this is immediate since $\RR$ is a subset of density $\log^{-\mathcal{O}(1)} X$ of the set of all squares $m^2 \asymp R$ and the set of all squares is clearly well-distributed. Hence, by Lemma \ref{largesieve} (if we denote by $p_1^{\epsilon_1} \cdots p_{K_0}^{\epsilon_{K_0}}|r$ the modulus of the character which induces $\chi \,(r)$):
\begin{align*}
& \sum_{\substack{r^2 \in \RR \\ q^2 \in \QQ}} |\Delta_2 (\alpha \ast \beta; a; r^2,q^2)|  \, \pprec \frac{1}{\sqrt{Q}} \max_{q^2 \in \QQ }  \sum_{r^2 \in \RR} \frac{1}{\phi(r^2)} \sum_{\substack{\chi \,\, (r^2)\\ \chi \neq \chi_0}} \bigg| \sum_{(mn,q^2)=1} \alpha(m) \beta(n) \chi(mn) \bigg| \\
& \pprec \frac{1}{R\sqrt{Q}} \max_{q^2 \in \QQ } \sum_{\substack{(\epsilon_j) \in \{0,1,2\}^{K_0} \\ (\epsilon_j) \neq \overline{0} }}  \sum_{j=1}^{K_0} \sum_{p_j \in I_j} \frac{\prod_{j=1}^{K_0} p_j^{\epsilon_j}}{\phi \left(\prod_{j=1}^{K_0} p_j^{\epsilon_j} \right) } \sideset{}{^\ast} \sum_{\chi \, (\prod_{j=1}^{K_0} p_j^{\epsilon_j})} \bigg| \sum_{\substack{(mn,q^2)=1 \\ (mn,\prod_{j=1}^{K_0} p_j)=1}} \alpha(m) \beta(n) \chi(mn) \bigg | \\
& \pprec \frac{ M^{1/2} N^{1/2}}{R \sqrt{Q}}\max_{\substack{(\epsilon_j) \in \{0,1,2\}^{K_0} \\ (\epsilon_j) \neq \overline{0} }} P^{\frac{1}{2} |\{j: \, \, \, \epsilon_j =0 \}|} (M + P^{\frac{1}{2} \sum_j \epsilon_j}M^{1/2} )^{1/2}(N + P^{\frac{1}{2} \sum_j \epsilon_j}N^{1/2} )^{1/2} \\
&\ll \frac{ M^{1/2} N^{1/2}}{R \sqrt{Q}} \max_{\substack{(\epsilon_j) \in \{0,1,2\}^{K_0} \\ (\epsilon_j) \neq \overline{0} }} P^{\frac{1}{2} |\{j: \, \, \, \epsilon_j =0 \}|}( M^{1/2}N^{1/2} + M^{1/2}N^{1/4} P^{\frac{1}{4} \sum_j \epsilon_j} + M^{1/4} N^{1/4}  P^{\frac{1}{2} \sum_j \epsilon_j} ) \\
& \ll \frac{MN}{\sqrt{PRQ} } + \frac{MN^{3/4}}{\sqrt{RQ}} + \frac{M^{3/4} N^{3/4}}{\sqrt{Q}} \ll \frac{X^{1- \eta}}{\sqrt{D}},
\end{align*}
since  $P^{\frac{1}{2} \sum_j \epsilon_j} \leq  R \leq N X^{-\delta/2}$.

To handle $\Delta_1$, define the symmetric discrepancy
\begin{align*}
\Delta_0 (\alpha \ast \beta; a, b_1, b_2; r^2,q^2) :=  \sum_{\substack{n \equiv a \, (r^2) \\ n \equiv b_1 \, (q^2)}} (\alpha\ast \beta)(n)  -\sum_{\substack{n \equiv a \, (r^2) \\ n \equiv b_2 \, (q^2)}} (\alpha\ast \beta)(n) 
\end{align*} 
Then by the Chinese remainder theorem we have
\begin{align*}
\sum_{\substack{r^2 \in \RR \\ q^2 \in \QQ}} |\Delta_1 (\alpha \ast \beta; a;r^2,q^2)| \, \leq \frac{1}{\phi(P_I^2)} \sum_{(b, P_I^2) =1} \sum_{\substack{r^2 \in \RR \\ q^2 \in \QQ}} |\Delta_0 (\alpha \ast \beta; a,a,b; r^2,q^2)| ,
\end{align*}
where $P_I= \prod_{p \in I} p$ for $I = \bigcup_{j=1}^K I_j.$ Hence, our claim follows once we show that for all $b_1$ and $b_2$ with $(b_1b_2, P_I^2)=1$ we have
\begin{align}
\sum_{\substack{r^2 \in \RR \\ q^2 \in \QQ}} |\Delta_0 (\alpha \ast \beta; a,b_1,b_2; r^2,q^2)| \, \ll \frac{X^{1-\eta}}{\sqrt{RQ}}
\end{align}

We rearrange the sum and apply Cauchy-Schwarz to get
\begin{align*}
& \sum_{\substack{r^2 \in \RR \\ q^2 \in \QQ}} |\Delta_0 (\alpha \ast \beta; a,b_1,b_2; r^2,q^2)| \, = \sum_{\substack{r^2 \in \RR \\ q^2 \in \QQ}} c_{q,r} \bigg ( \sum_{\substack{n \equiv a \, (r^2) \\ n \equiv b_1 \, (q^2)}} (\alpha\ast \beta)(n)  -\sum_{\substack{n \equiv a \, (r^2) \\ n \equiv b_2 \, (q^2)}} (\alpha\ast \beta)(n)  \bigg ) \\
& = \sum_{r^2 \in \RR} \sum_{m} \alpha(m) \bigg ( \sum_{q^2 \in \QQ}  c_{q,r} \sum_{mn \equiv a \, (r^2)} \beta(n)  ( 1_{mn \equiv b_1 \, (q^2) } - 1_{mn \equiv b_2 \, (q^2) } )  \bigg) \\
& \pprec R^{1/4}M^{1/2} \bigg (\sum_{r^2 \in \RR} \sum_{m} \psi_M(m) \bigg |  \sum_{q^2 \in \QQ}  c_{q,r} \sum_{mn \equiv a \, (r^2)} \beta(n)  ( 1_{mn \equiv b_1 \, (q^2) } - 1_{mn \equiv b_2 \, (q^2) } ) \bigg|^2 \bigg )^{1/2},
\end{align*}
where $\psi_M(m)$ is a $C^\infty$-smooth majorant to $1_{m \sim M}.$ Thus, we need to show that
\begin{align*}
\sum_{r^2 \in \RR} \sum_{m} \psi_M(m) \bigg |  \sum_{q^2 \in \QQ}  c_{q,r} \sum_{mn \equiv a \, (r^2)} \beta(n)  ( 1_{mn \equiv b_1 \, (q^2) } - 1_{mn \equiv b_2 \, (q^2) } ) \bigg|^2 \, \ll \frac{MN^2X^{-\eta}}{QR^{3/2}}.
\end{align*}
Expanding the square we get 
\begin{align*}
 \Sigma(b_1,b_1)-\Sigma(b_1,b_2)-\Sigma(b_2,b_1)+\Sigma(b_2,b_2),
\end{align*}
where
\begin{align*}
\Sigma(b_1,b_2) :=  \sum_{r^2 \in \RR} \sum_{m} \psi_M(m)   \sum_{q_1^2, q_2^2 \in \QQ}  c_{q_1,r} \overline{c_{q_2,r}}\sum_{\substack{mn_1 \equiv a \, (r^2) \\ mn_2 \equiv a \, (r^2)}} \beta(n_1) \overline{\beta(n_2)}  1_{\substack{mn_1 \equiv b_1 \, (q_1^2) \\ mn_2 \equiv b_2 \, (q_2^2) } } .
\end{align*}
Our claim then follows once we show that
\begin{align} \label{sigmaasymp}
\Sigma(b_1,b_2) = X_0 + \mathcal{O} \bigg( \frac{MN^2X^{-\eta}}{QR^{3/2}}\bigg),
\end{align}
where $X_0$ does not depend on $b_1,b_2.$

We first note that since $a$ is coprime to $r^2,$ also $m,$ $n_1$ and $n_2$ are also coprime to $r^2$. Hence, we obtain $n_1 \equiv n_2 \, (r^2).$ Thus, we can write $n_2 = n_1+ \ell r^2$ for some $0 \leq |\ell| \ll L := N/R.$ Therefore,
 \begin{align*}
 \Sigma(b_1,b_2) = \sum_{r^2 \in \RR} \sum_{0 \leq |\ell| \ll L }  \sum_{q_1^2, q_2^2 \in \QQ}  c_{q_1,r} \overline{c_{q_2,r}}\sum_{\substack{n}} \beta(n) \overline{\beta(n +\ell r^2)}   \sum_{m} \psi_M(m) 1_{\substack{  mn \equiv a \, (r^2) \\  mn \equiv b_1 \, (q_1^2) \\  m(n+\ell r^2) \equiv b_2 \, (q_2^2) } } 
 \end{align*}
 
\begin{remark} The size of $L$ is roughly $X^{\gamma +2 \varpi},$ whereas in \cite{polymath} this is of size $X^\delta$. 
\end{remark}

For $\ell = 0$ (i.e. $n_1 = n_2$) the contribution is bounded by
\begin{align*}
 \sum_{r^2 \in \RR}   \sum_{q_1^2, q_2^2 \in \QQ} &  \sum_{\substack{n}} | \beta(n) |^2  \sum_{m} \psi_M(m) 1_{\substack{  mn \equiv a \, (r^2) \\  mn \equiv b_1 \, (q_1^2) \\  mn \equiv b_2 \, (q_2^2) } } \\
&\pprec \, \sum_{r^2 \in \RR}   \sum_{q_1^2, q_2^2 \in \QQ} \sum_{s \asymp X} 1_{\substack{  s \equiv a \, (r^2) \\  s \equiv b_1 \, (q_1^2) \\  s \equiv b_2 \, (q_2^2) } }  \, \ll \,  X \sum_{r^2 \in \RR} \sum_{q_1^2, q_2^2 \in \QQ}  \frac{1}{r^2[q_1^2,q_2^2]}   \\ & \ll \, \frac{X}{\sqrt{R}} \sum_{q_0^2 \ll Q} \sum_{q_1^2, q_2^2 \asymp Q/q_0^2} \frac{1}{q_0^2q_1^2q_2^2} \ll \,  \frac{X}{\sqrt{RQ}} \ll \frac{MN^2X^{-\eta}}{QR^{3/2}},
\end{align*}
since by (\ref{RQ1}) we have $R\sqrt{Q} \leq N X^{-\delta/2}.$ This is sufficient for (\ref{sigmaasymp}) (note that the main contribution to this error term comes from the diagonal, i.e. the part where $(q_1,q_2)$ is big).

For $\ell \neq 0$ we note that $n$ and $n+\ell r^2$ are coprime to $r^2 q_1^2$ and $r^2 q_2^2$, respectively. Hence, we can write
\begin{align} \label{gamma}
 1_{\substack{  mn \equiv a \, (r^2) \\  mn \equiv b_1 \, (q_1^2) \\  m(n+\ell r^2) \equiv b_2 \, (q_2^2) } }  = 1_{m \equiv \theta \, (r^2 [q_1^2,q_2^2])}
\end{align}
for some residue class $\theta= \theta(b_1,b_2,\ell,n,a)$ modulo $r^2 [q_1^2,q_2^2].$ Write $q_0^2 := (q_1^2,q_2^2).$ Then in the sum over $n$ we have a congruence restriction
\begin{align*}
b_1/n \equiv b_2 /(n+ \ell r^2) \,(q_0^2).
\end{align*}
We let $C(n)=C(n;\ell,r,q_0)$ denote the characteristic function of this congruence. We note that by the coprimality of $q_0^2$ and $r^2b_1$ this is the characteristic function of a union of at most
\begin{align*}
(b_1-b_2,q_0^2,\ell r^2b_1) \leq (q_0^2, \ell)
\end{align*} 
congruence classes.

Applying the Poisson summation formula (Lemma \ref{poisson}) we obtain
\begin{align*}
\Sigma(b_1,b_2) = \Sigma_0 (b_1,b_2) +  \Sigma_1 (b_1,b_2) + \mathcal{O}  \bigg( \frac{MN^2X^{-\eta}}{QR^{3/2}}\bigg),
\end{align*}
where
\begin{align*}
\Sigma_0 (b_1,b_2)& := \sum_{m}\psi_M(m) \sum_{r^2 \in \RR} \sum_{0 < |\ell| \ll L }  \sum_{q_1^2, q_2^2 \in \QQ} \frac{c_{q_1,r} \overline{c_{q_2,r}}}{r^2[q_1^2,q_2^2]}  \sum_{\substack{n}} \beta(n) \overline{\beta(n +\ell r^2)}  C(n), \\
\Sigma_1 (b_1,b_2) & \ll_\epsilon \, 1 + X^{2\epsilon} \widehat{\Sigma}_1(b_1,b_2) \quad \quad \text{for} \\
\widehat{\Sigma}_1(b_1,b_2) &:= \sum_{r^2 \in \RR} \sum_{0 < |\ell| \ll L } \sum_{q_0^2 \ll Q}  \sum_{\substack{q_1^2, q_2^2 \in \QQ/q_0^2 \\ (q_1,q_2)=1}}   \frac{1}{H} \sum_{1 \leq |h| \leq H} \bigg | \sum_{\substack{n}} \beta(n) \overline{\beta(n +\ell r^2)} C(n) e_{r^2 q_0^2q_1^2q_2^2}(\theta h) \bigg|,
\end{align*}
where $H=H(q_0,q_1,q_2,r) := X^{\epsilon}r^2 q_0^2q_1^2q_2^2/M,$ and
\begin{align*}
\QQ/q_0^2 := \{q^2: \, \, q^2 q_0^2 \in \QQ \}.
\end{align*}

We write
\begin{align*}
 \Sigma_0 (b_1,b_2)=X_0 + \Sigma_0' (b_1,b_2),
\end{align*}
where $X_0$ is the part with $q_0=1$ and $\Sigma_0' (b_1,b_2)$ corresponds to $q_0 > 1$. Note that $C(n)$ depends on $b_1,b_2$ only if $q_0>1$, so that $X_0$ is independent of $b_1,b_2$. Also, $q_0 > 1$ implies that $q_0^2 \gg X^{\delta}.$ Hence,
\begin{align*}
\Sigma_0' (b_1,b_2) \, \pprec \frac{M}{\sqrt{R}} \sum_{0< |\ell| \ll L} \sum_{X^\delta \ll q_0^2 \ll Q} \sum_{\substack{q_1^2,q_2^2 \asymp Q/q_0^2 }} \frac{1}{q_0^2 q_1^2q_2^2} \sum_{n} C(n).
\end{align*}
We sum over $\ell$ on the inside, recalling the definition of $C(n)$:
\begin{align*}
 \sum_{0< |\ell| \ll L} 1_{b_1/n \equiv b_2 /(n+ \ell r^2) \,(q_0^2)} \, \ll \,1+ L/q_0^2.
\end{align*}
Hence
\begin{align*}
\Sigma_0' (b_1,b_2) \,& \pprec \frac{M}{\sqrt{R}} \sum_{X^\delta \ll q_0^2 \ll Q} \sum_{q_1^2,q_2^2 \asymp Q/q_0^2} \frac{1}{q_0^2 q_1^2q_2^2} \sum_{n} ( 1+ L/q_0 ^2) \\
& \ll \frac{M N}{\sqrt{RQ}} + \frac{MNLX^{-\eta}}{Q\sqrt{R}} \, \ll \frac{MN^2 X^{-\eta}}{QR^{3/2}},
\end{align*}
where the last bound follows from  $R\sqrt{Q} \leq N X^{-\delta/2}$ and $L=N/R$. Thus, we are done once we show the bound
\begin{align} \label{target}
\widehat{\Sigma}_1(b_1,b_2) \, \ll \, \frac{MN^2 X^{-\eta}}{QR^{3/2}}.
\end{align}
For this we need another application of Cauchy-Schwarz to smooth the coefficients in the sum over $n.$   To do this optimally we first need to split the sum over $q_1^2 \in \QQ/q_0^2$ as follows: for 
\begin{align*}
W :=\frac{M^2X^{-2\delta}}{R^2Q^2}= X^{2\gamma-4\varpi - 2\delta} \geq 1,
\end{align*}
 write $Q/q_0^2=UV$ with $V \in [V_0,V_0 X^{\delta}]$ for
\begin{align*}
V_0 := \max \bigg \{ 1, \,\frac{Q}{q_0^2 W}\bigg \}.
\end{align*}
For any fixed $q_0^2$ we may write
\begin{align*}
\sum_{q_1^2 \in \QQ/q_0^2} = \sum_{u^2 \in \UU} \sum_{v^2 \in \VV},
\end{align*}
 where $\UU= \UU(q_0)$ and $\VV=\VV(q_0)$ are sets such that for any $u^2 \in \UU$ and $v^2 \in \VV$, we have $ u^2 \asymp U,$ $v^2 \asymp V$, and $u^2v^2 \in \QQ/q_0^2$. This factorization is again determined so that the diagonal contribution will be just small enough. By Cauchy-Schwarz we obtain
\begin{align*}
\widehat{\Sigma}_1(b_1,b_2) & = \sum_{r^2 \in \RR} \sum_{0 < |\ell| \ll L }  \sum_{q_0^2 \ll Q} \sum_{\substack{q_1^2, q_2^2 \in \QQ/q_0^2 \\ (q_1,q_2)=1}}   \frac{1}{H} \sum_{1 \leq |h| \leq H} \bigg | \sum_{\substack{n}} \beta(n) \overline{\beta(n +\ell r^2)} C(n) e_{r^2 q_0^2q_1^2q_2^2}(\theta h) \bigg|  \\
&= \sum_{r^2 \in \RR} \sum_{0 < |\ell| \ll L }  \sum_{q_0^2 \ll Q} \sum_{u^2 \in \UU}  \sum_{\substack{n}} \beta(n) \overline{\beta(n +\ell r^2)} C(n)  \\
& \hspace{110pt} \sum_{v^2 \in \VV}  \sum_{ \substack{ q_2^2 \in \QQ/q_0^2\\ (uv,q_2)=1}}   \frac{1}{H} \sum_{1 \leq |h| \leq H}  c(r,q_0,u,\ell,v,q_2, h)  e_{r^2 q_0^2 u^2v^2 q_2^2}(\theta h)  \\
& \pprec \sum_{r^2 \in \RR} \sum_{0 < |\ell| \ll L }  \sum_{q_0^2 \ll Q} \sum_{u^2 \in \UU}  \bigg(\frac{N(q_0^2,\ell)}{q_0^2} \bigg)^{1/2} \bigg( \Sigma(r,q_0,u,\ell, b_1,b_2) \bigg)^{1/2},
\end{align*}
where
\begin{align*}
\Sigma(r,q_0,u,\ell,b_1,b_2) &:= \sum_{\substack{n}} \psi_N(n) C(n) \bigg|  \sum_{v^2 \in \VV}  \sum_{ \substack{q_2^2 \in \QQ/q_0^2 \\ (uv,q_2)=1}}   \frac{1}{H} \sum_{1 \leq |h| \leq H} c(r,q_0,u,\ell,v,q_2, h) e_{r^2 q_0^2 u^2v^2 q_2^2}(\theta h) \bigg|^2 \\
& \leq \sum_{v_1^2,v_2^2 \in \VV}  \sum_{\substack{ q_2^2,s_2^2 \in \QQ/q_0^2\\ (uv_1,q_2)=(uv_2,s_2)=1}}    \frac{1}{H^2} \sum_{1 \leq |h_1|,|h_2| \leq H}  |S_{\ell,r^2,u^2}(h_1,h_2,u^2,v_1^2,v_2^2,q_2^2,s_2^2)|
\end{align*}
for
\begin{align*}
S_{\ell,r^2,u^2} = \sum_{n} C(n) \psi_N(n) \Phi_\ell(n; h_1,r^2,q_0^2,u^2v_1^2,q_2^2)\overline{\Phi_\ell(n; h_2,r^2,q_0^2,u^2v_2^2,s_2^2)}
\end{align*}
and
\begin{align}  \label{bigphi}
\Phi_\ell(n; h,r^2,q_0^2,u^2v_1^2,q_2^2) =&  e_{r^2} \bigg( \frac{ah}{q_0^2u^2v_1^2q_2^2 n}\bigg) e_{q_0^2u^2v_1^2} \bigg( \frac{b_1 h}{r^2 q_2^2 n}\bigg) e_{q_2^2} \bigg( \frac{b_2 h}{r^2 q_0^2u^2v_1^2 (n+\ell r^2)}\bigg) 
\end{align}
 Here we have used the Chinese Remainder Theorem (Lemma \ref{crt}) to expand the definition (\ref{gamma}) of $\theta$. Hence
\begin{align*}
\Phi_\ell(n; h_1,r^2,q_0^2,u^2v_1^2,q_2^2)\overline{\Phi_\ell(n; h_2,r^2,q_0^2,u^2v_2^2,s_2^2)} =\, e_{d_1} \bigg ( \frac{c_1}{n} \bigg ) e_{d_2} \bigg ( \frac{c_2}{n+ \tau} \bigg)
\end{align*}
for $\tau:=\ell r^2$, $d_1 : =r^2q_0^2u^2[v_1^2,v_2^2]$, and $d_2 :=[q_2^2,s_2^2]$, for some integers $c_1$ and $c_2$ independent of $n$. Recall that $C(n)$ is the characteristic function of at most $(q_0^2,\ell)$ residue classes modulo $q_0^2$. Hence, applying the triangle inequality and Lemma \ref{expsum2lemma} we obtain
\begin{align*}
 |S_{\ell,r^2,u^2} | \pprec (q_0^2,\ell)\bigg( N^{1/2}(d_1d_2)^{1/6}X^{\delta/6} + \frac{N}{q_0^2} \frac{(c_1,\delta_1')}{\delta_1'} \frac{(c_2,\delta_2')}{\delta_2'}\bigg) .
\end{align*}
Note that
\begin{align*}
(d_1d_2)^{1/6} \leq (Rq_0^2 U V^2 (Q/q_0^2)^2 )^{1/6} \leq R^{1/6}U^{1/6} V^{1/3} Q^{1/3},
\end{align*}
and $(c_2,\delta_2')/\delta_2' \leq 1$. We have
\begin{align*}
\frac{(c_1, \delta_1')}{\delta_1'} \leq \frac{(c_1,r^2)}{r^2}.
\end{align*}
The $r^2$-component in  $ \Phi_\ell(n; h_1,r^2,q_0^2,u^2v_1^2,q_2^2)\overline{\Phi_\ell(n; h_2,r^2,q_0^2,u^2v_2^2,s_2^2)}$
 is by definition
 \begin{align*}
 e_{r^2} \bigg( \frac{ah_1}{q_0^2u^2v_1^2q_2^2 n} -\frac{ah_2}{q_0^2u^2v_2^2s_2^2 n} \bigg).
 \end{align*}
 Since $(r,aq_0 u v_1 v_2 q_2 s_2)=1$, this implies that
\begin{align*}
(c_1,r^2) = (h_1v_2^2s_2^2-h_2v_1^2q_2^2,r^2).
\end{align*}
Therefore,
\begin{align} \label{expsum2}
| S_{\ell,r^2,u^2} | \, \pprec (q_0^2, \ell) \bigg( N^{1/2}R^{1/6}U^{1/6} V^{1/3}Q^{1/3} X^{\delta/6} + \frac{N}{q_0^2R} (h_1v_2^2s_2^2 - h_2v_1^2q_2^2, r^2) \bigg).
\end{align}
Using this we obtain
\begin{align*}
\Sigma(r,u,q_0,\ell,b_1,b_2)  \pprec \sum_{v_1^2,v_2^2 \in \VV}  \sum_{ q_2^2,s_2^2 \in \QQ/q_0^2}    \frac{1}{H^2} \sum_{1 \leq |h_1|,|h_2| \leq H} (q_0^2, \ell) N^{1/2}R^{1/6}U^{1/6} V^{1/3}Q^{1/3}X^{\delta/6}  \\
+  \frac{N(q_0^2, \ell) }{q_0^2R}  \sum_{v_1^2,v_2^2 \in \VV}  \sum_{ q_2^2,s_2^2 \in \QQ/q_0^2}    \frac{1}{H^2} \sum_{1 \leq |h_1|,|h_2| \leq H} (h_1v_2^2s_2^2 - h_2v_1^2q_2^2, r^2) .
\end{align*}
The first term is bounded by
\begin{align*}
\frac{(q_0^2,\ell)}{q_0^2} N^{1/2} R^{1/6} U^{1/6} V^{4/3} Q^{4/3} X^{\delta/6}.
\end{align*}
In the second term we write $\Delta = h_1v_2^2s_2^2 - h_2v_1^2q_2^2$ to get a bound (using Lemma \ref{gcdsum})
\begin{align*}
\pprec \frac{N(q_0^2, \ell) }{q_0^2R} \frac{1}{H^2}\sum_{0 \leq |\Delta| \ll H V Q/q_0^2} (\Delta,r^2) \sum_{h_1,v_2,s_2} 1 \\
\pprec \frac{N(q_0^2, \ell) }{q_0^2R} \frac{1}{H^2} \bigg(H V Q/q_0^2 +R \bigg)H V^{1/2} Q^{1/2}/q_0 \\
= \frac{(q_0^2,\ell)}{q_0^5} \frac{NV^{3/2}Q^{3/2}}{R} + \frac{(q_0^2,\ell)}{q_0^3} \frac{N V^{1/2} Q^{1/2}}{H},
\end{align*}
so that using $H \gg RQ^2/(Mq_0^2)$ we obtain
\begin{align*}
\Sigma(r,u,q_0,\ell,b_1,b_2)  \pprec  \frac{(q_0^2,\ell)}{q_0} \bigg( N^{1/2} R^{1/6} U^{1/6} V^{4/3} Q^{4/3}X^{\delta/6} + \frac{NV^{3/2}Q^{3/2}}{R}  + \frac{MN V^{1/2} }{RQ^{3/2}} \bigg).
\end{align*}
Hence,
\begin{align} \nonumber
& \widehat{\Sigma}_1(b_1,b_2) \, \pprec  N^{1/2} \sum_{r^2 \in \RR}  \sum_{q_0^2 \ll Q} \sum_{u^2 \in \UU} \sum_{0 < |\ell| \ll L }  \bigg(\frac{(q_0^2,\ell)}{q_0^2} \bigg)^{1/2} \bigg( \Sigma(r,u,q_0, \ell, b_1,b_2) \bigg)^{1/2} \\ \label{sigmahatbound}
&\pprec N^{1/2} \sum_{r^2 \in \RR}  \sum_{q_0^2 \ll Q} \sum_{u^2 \in \UU} \sum_{0 < |\ell| \ll L }  \frac{(q_0^2,\ell)}{q_0} \bigg( N^{1/2} R^{1/6} U^{1/6} V^{4/3} Q^{4/3}X^{\delta/6} + \frac{NV^{3/2}Q^{3/2}}{R}  + \frac{MN V^{1/2} }{RQ^{3/2}}  \bigg)^{1/2}
\end{align}

Recalling the definition of $UV = Q/q_0^2$, we separate the sum into two parts:

\textbf{Sum over $q_0^2 \leq Q/W$:} We have $U \leq W$ and $V \leq X^{\delta} Q/W,$ so that by Lemma \ref{gcdsum} we get a contribution 
\begin{align} \label{mainbound}
&\pprec N^{1/2} R^{1/2} W^{1/2}L  \bigg( \frac{N^{1/2} R^{1/6} W^{1/6} Q^{8/3}X^{3\delta/2}}{W^{4/3}} + \frac{NQ^{3}X^{3\delta/2}}{RW^{3/2}}  + \frac{MN X^{\delta/2}}{RQW^{1/2}}  \bigg)^{1/2} \\ \nonumber
&= \frac{MN^2}{QR^{3/2}}\bigg(  \frac{R^{4+1/6}Q^{14/3}L^2X^{3\delta/2}}{M^2N^{5/2}W^{1/6}} + \frac{R^3Q^5L^2X^{3\delta/2}}{M^2N^2W^{1/2}} + \frac{R^3QW^{1/2}L^2X^{\delta/2}}{MN^2} \bigg)^{1/2}
\end{align}
We have to show that the factor in the brackets is $\ll X^{-\eta}.$ For this, recall that $L=N/R$, $W=M^2X^{-2\delta}R^{-2}Q^{-2},$ which gives a bound
\begin{align}
\label{diagfactor} \frac{R^{13/6}Q^{14/3}X^{3\delta/2}}{M^2N^{1/2}W^{1/6}} + \frac{RQ^5X^{3\delta/2}}{M^2W^{1/2}} + \frac{RQW^{1/2}X^{\delta/2}}{M} \\ \nonumber
 =\frac{R^{15/6}Q^{5}X^{11\delta/6}}{M^{7/3}N^{1/2}} + \frac{R^2Q^6X^{5\delta/2}}{M^3} + X^{-\delta/2} \leq X^{-\eta},
\end{align}
if
 \begin{align*}
  &  \frac{R^{15} Q^{30} X^{11\delta}}{M^{14}N^3} \leq \frac{X^{15/2-30\varpi -30 \gamma -30\delta}X^{120 \varpi + 60\gamma + 90\delta}X^{11\delta} }{X^{14/2+14\gamma}X^{3/2-3\gamma}}\leq X^{-\eta}, \quad \text{and}\\
& \frac{R^2Q^6X^{5\delta/2}}{M^3}  \leq \frac{X^{1-4 \varpi - 4 \gamma -4 \delta}X^{24\varpi + 12 \gamma +18 \delta}X^{5\delta /2}}{X^{3/2 + 3 \gamma}} \leq X^{-\eta},
\end{align*}
which holds for some $\eta >0$ if
\begin{align*}
\begin{cases} &19 \gamma + 90 \varpi + 71 \delta < 1,  \quad \text{and}\\
&10 \gamma +40 \varpi + 33 \delta < 1.
\end{cases}
\end{align*}

\textbf{Sum over $q_0^2 > Q/W$:} we have $U =Q/q_0^2$ and $V =1,$ so that 
\begin{align*}
 \sum_{r^2 \in \RR}  \sum_{q_0^2 > Q/W} \sum_{u^2 \in \UU} \sum_{0 < |\ell| \ll L }  \frac{(q_0^2,\ell)}{q_0} \pprec R^{1/2} W^{1/2} L,
\end{align*}
Thus, by (\ref{sigmahatbound}) we get a total contribution bounded by
\begin{align*}
 N^{1/2} R^{1/2} W^{1/2} L  \bigg( N^{1/2} R^{1/6}Q^{3/2}X^{\delta/6} + \frac{NQ^{3/2}}{R}  + \frac{MN  }{RQ^{3/2}}  \bigg)^{1/2}.
\end{align*}
This is smaller than (\ref{mainbound}) since $Q \geq W$. Hence, the bound is sufficient if
\begin{align} \label{sigma}
19 \sigma + 90 \varpi + 71 \delta < 1,
\end{align}  
which holds for some $\delta >0$ since $\sigma = 1/19.5$ and $\varpi=1/4000$. \qed

\begin{remark} \label{gapremark} Note that for  $M \leq X^{1/2+2\varpi},$ even if we keep both of the sums over $q_1$ and $q_2$ inside the application of Cauchy-Schwarz, the diagonal contribution is too large by a factor of $RQ/M = D/M$ (cf. third term in (\ref{diagfactor}) with $W=1$). For this reason we are unable to obtain Type II information when $M,N \in [X^{1/2-2\varpi}, X^{1/2+2\varpi}]$. 
\end{remark}

\begin{remark} Using Lemma \ref{expsum2lemma} instead of Lemma \ref{expsum1lemma} gives a wider range for the Type II sums. Indeed, ignoring exponents that depend on $\varpi$ and $\delta,$ for $q_0=1$ the size of the modulus is
\begin{align*}
RUV^2 Q^2 \approx X^{1/2-2\gamma} X^{2 \gamma} X^0 X^{4\gamma} = X^{1/2+4\gamma},
\end{align*}
while the length of the sum is $N =X^{1/2-\gamma}.$ We have $N < (X^{1/2+4\gamma})^{2/3}$ if $\gamma > 1/22,$ so that in this range we get a better bound by using the $q$-van der Corput method rather than the P\'olya-Vinogradov bound.
\end{remark}

\bibliography{pmodsquare}
\bibliographystyle{abbrv}

\end{document}